\pgfplotsset{compat=1.11}
\definecolor{ForestGreen}{RGB}{34,139,34}
\newtheorem{theorem}{Theorem}
\newtheorem{lemma}{Lemma}
\newtheorem{proposition}[lemma]{Proposition}
\newtheorem{remark}[lemma]{Remark}
\newtheorem{question}{Question}
\numberwithin{lemma}{section}
\newcommand{\la}{\langle}
\newcommand{\ra}{\rangle}
\DeclareMathOperator{\supp}{\mathrm{supp}}
\DeclareMathOperator{\Char}{\mathrm{char}}
\newcommand{\blue}[1]{{\color{blue}#1}}
\numberwithin{equation}{section}
\newcommand{\R}{{\mathbb R}}
\newcommand{\N}{{\mathbb N}}
\renewcommand{\R}{\mathbb R}
\newcommand{\tgamma}{{\tilde \gamma}}
\newcommand{\uu}{{\mathbf u}}
\newcommand{\qq}{{\mathbf q}}
\newcommand{\ww}{{\mathbf w}}
\newcommand{\rr}{{\mathbf r}}
\newcommand{\sgn}{\mathop{\mathrm{sgn}}}
\newcommand{\tchi}{{\tilde \chi}}
\newcommand{\tL}{\tilde L}
\newcommand{\Diag}{{\mathfrak{D}}}
\begin{document}

\title{Testing by wave packets and modified scattering in nonlinear dispersive pde's}

\author{Mihaela Ifrim}
\address{Department of Mathematics, University of Wisconsin, Madison}
\email{ifrim@wisc.edu}

\author{ Daniel Tataru}
\address{Department of Mathematics, University of California at Berkeley}
\email{tataru@math.berkeley.edu}

\begin{abstract}
Modified scattering phenomena are encountered in the study of global
properties for nonlinear dispersive partial differential equations in situations where 
the decay of solutions at infinity is borderline and scattering 
fails just barely. An interesting example is that of problems 
with cubic nonlinearities in one space dimension.

The method of testing by wave packets was introduced by the authors
as a tool to efficiently capture the asymptotic equations 
associated to such flows, and thus establish the modified 
scattering mechanism in a simpler, more efficient fashion, and at lower regularity. In these expository notes we describe how this method  can be applied to problems with general dispersion relations.
\end{abstract}

%   \noindent \textbf{Mathematics Subject Classification (2010):} 
\subjclass{Primary: 35P25; %PDEs in connection with relativity and gravitational theory 
Secondary: 	76B15, %	Second-order hyperbolic equations
	35Q55.  	%PDEs in connection with fluid mechanics
}
\keywords{wave packets, modified scattering}

\maketitle

\setcounter{tocdepth}{1}
\tableofcontents

\section{Introduction}

Given a nonlinear perturbation of a linear partial differential equation, scattering theory aims to compare 
the long time dynamics of the nonlinear problem with the long time dynamics of the 
corresponding linear flow. This is particularly interesting in the context of dispersive equations, which have two key properties:
\begin{itemize}
    \item a conservative nature at the $L^2$ level, with some energy that is either exactly 
    conserved, or essentially conserved for small data.
    
    \item  some form of uniform or averaged decay, whose effect is that the strength of the nonlinear interactions decays with time.
\end{itemize}

Whether scattering holds for a given problem, that depends on the relative 
strength of the nonlinearity on one hand, and on the dispersive effects on the other hand.
If the nonlinearity is mild, then scattering holds, in the sense that, as time goes to infinity, the solutions to the nonlinear problem will approach solutions to the linear problem.

In this paper we are instead interested in the \emph{modified scattering} phenomena. These  are encountered in situations where  the decay of solutions at infinity is borderline, and scattering fails just barely. Then one might expect that the nonlinear asymptotic behavior 
can be seen as some perturbation of the linear asymptotic behavior. Such dynamics are encountered for many classes of equations, and the modified scattering effects 
may vary from case to case; this may include for instance corrections to the velocity, amplitude or phase.
The class of problems we are interested in here is that of dispersive problems 
with cubic nonlinearities in one space dimension. As we will see, for this class  modified scattering means a phase correction on a logarithmic time scale. 

\emph{The method of testing by wave packets} was introduced by the authors
in the context of the cubic nonlinear Schr\"odinger  flow (NLS) \cite{NLS}, and used later
in water wave contexts \cite{IT-global}, \cite{IT-t}, \cite{AIT-global}
as a tool to efficiently capture the asymptotic equations associated to such flows, and thus establish the modified scattering mechanism.  
See also \cite{BHG,r1,r2,r3} for further examples
where this idea is used.

These notes, written by the authors for a summer school at MSRI~\cite{msri-summer} in 2020, and based on earlier set of notes prepared by  the authors for an AMS meeting in Las Vegas in 2016,  aim to describe how this method can be applied to problems with general dispersion relations. Notably, here we work with  with minimal structure assumptions on the nonlinearity, which do not include a scaling symmetry or energy conservation.

\subsection{ A model dispersive problem}
The model problem we consider here is a one dimensional evolution of the form
\begin{equation}\label{main}
\left\{
\begin{aligned}
&i \partial_t u - A(D) u = Q( u, \bar u, u )\\
&u(0,x) = u_0(x),
\end{aligned}
 \right.
\end{equation}
for a complex valued function 
\[
u: \R \times \R \to \mathbb{C} .
\]
Here we will make the following general assumptions:
\begin{enumerate}[label=(H\arabic*), ref=(H\arabic*)]

\item \emph{Real symbol.} The symbol $a(\xi)$ of the multiplier $A(D)$ is real and smooth;
this guarantees that the $L^2$ norm is preserved for solutions to the corresponding linear flow.

\item \emph{Dispersive character.} The group velocity depends on the frequency,
\[
\R \ni \xi \to a'(\xi) \in \R \quad \text{ is a diffeomorphism} \quad (i. e. \ \ a'' \neq 0).
\]
\item \emph{Cubic, translation and phase shift invariant nonlinearity}. The nonlinearity $Q$
is defined by its smooth symbol $q$ as follows:
\[
\widehat{Q( u, \bar u, u )}(\xi) = \frac{1}{2\pi} \int_{\xi = \xi_1 - \xi_2 + \xi_3} q(\xi_1,\xi_2,\xi_3) \hat u(\xi_1) 
\overline{\hat{u}(\xi_2)} \hat u(\xi_3)\,  d\sigma,
\]
with $d\sigma =d_{\xi_i}d_{\xi_j}$, $i\neq j=\overline{1,3}$ (where one needs to adjust the sign corresponding to the chosen $(i,j)$ pair). 

\item \emph{Conservative nonlinearity.}   The symbol associated to $Q$ and computed on the diagonal must be real i.e.
\[
q(\xi,\xi,\xi) \in \R.
\]

\end{enumerate}

For the Cauchy problem \eqref{main} we will ask the following question:

\begin{question}
Assume that the initial data for the evolution \eqref{main} is small, localized
and sufficiently smooth. Does this guarantee that we have global solutions with dispersive decay and modified scattering ? 
\end{question}

Our goal in this paper will be to show that the answer is affirmative, under  
minimal assumptions on the behaviour of the symbols $a (\cdot)$ and $q (\cdot, \cdot, \cdot)$ at infinity, and also under minimal regularity and decay assumptions for the initial data $u_0$.

\subsection{ An overview of the paper} To motivate the results, our exposition will begin with a brief discussion of linear dispersion in Section~\ref{s:linear}, which notably ends with the vector field bound in Proposition~\ref{p:vf}. The standard linear scattering mechanism described here serves as the basis for the 
nonlinear, modified scattering results which are the main goal of the paper.

In the following section, i.e. in Section ~\ref{s:asymptotic},  we provide a heuristic discussion of the modified scattering phenomena. The premise here is that, relative to the linear scattering mechanism, the nonlinear \emph{asymptotic profile} is governed by an \emph{asymptotic equation}. The objective is then to efficiently capture both of these objects in the analysis.
We outline several  ideas which have been used over the years, and finish with a brief introduction to wave packet testing. 

At this point we are ready to present the main results of the paper, which in a nutshell assert that global solutions 
with modified scattering dynamics exist for the flow in \eqref{main} under suitable assumptions. For expository purposes, we will split the discussion in two parts:

\begin{enumerate}[label=(\roman*)]
\item In Section~\ref{s:compact} we consider cubic forms $Q$ with compactly supported symbols. Then one may also assume that the 
solution $u$ has a compactly supported Fourier transform, 
and no restriction is imposed on the behavior
of $a$ at infinity. In this setting the arguments are simpler, 
and we are able to present the main steps, namely the energy estimates and the wave packet testing, in a streamlined 
fashion, without distracting technicalities. 

\item In Section~\ref{s:general} we consider cubic forms $Q$ with bounded symbols,
and, correspondingly, symbols $a$ so that $a''(\xi) \approx |\xi|^\sigma$ at infinity for some real $\sigma$.

Then we show that, for initial data $u_0$ which is small in suitable spaces
\[
u_0 \in H^{s_0}, \qquad x u_0 \in H^{s_1},
\]
the solutions are global and their asymptotic behavior is 
still governed by the modified scattering mechanism.  
Anticipating the precise results in Section~\ref{s:general}, we point out here that there are two qualitatively different scenarios:

\begin{description}
\item[ (i) The generalized NLS case] $\sigma \geq -1$, where $a$ is superlinear at infinity and thus we have infinite speed of propagation. This includes for instance

\begin{enumerate}[label=\alph*)]
 \item SQG type problems, $\sigma = -1$, where we take 
 \[
 s_0 = 0+, \qquad s_1=1.
 \]

\item NLS type problems, $\sigma = 0$, where we take 
\[
s_0=-\frac12+, \qquad s_1=\frac12.
\]
\item KdV type problems, $\sigma =1$, where we take 
\[
s_0 = -1+, \qquad s_1=0.
\]

\end{enumerate}

\item[(ii) The generalized Klein-Gordon case] $\sigma < -1$, 
where $a$ is linear at infinity and then we have finite speed of propagation in the high frequency limit. This includes for instance

\begin{enumerate}[label=\alph*)]
\item gravity wave models, $\sigma = -3/2$, where we take 
\[
s_0=1+, \qquad s_1=\frac12.
\]

\item  Klein-Gordon models, $\sigma = -3$, where we take 
\[
s_0=1+, \qquad s_1=2.
\]
\end{enumerate}
\end{description}

\end{enumerate}

While our results do allow for a full range of asymptotic behaviors for $a$, this is far from capturing a full range of problems,  as  $Q$ is not in general bounded in many interesting models. We leave it for the  interested reader 
to investigate more general situations. 

Another line of investigation which is completely omitted
in our discussion here is that of normal form methods,
which in many instances allow one to expand the scope 
of this type of results to problems which also have 
nonresonant quadratic nonlinearities.

\subsection{Acknowledgements} 
 The first author was supported by a Luce Professorship, by the Sloan Foundation, and by an NSF CAREER grant DMS-1845037. The second author was supported by the NSF grant DMS-2054975 as well as by a Simons Investigator grant from the Simons Foundation. Some of this work was presented during an MSRI Graduate summer school in  2020. Other parts of the work were
  carried out while both authors were 
 participating in the MSRI program ``Mathematical problems in fluid dynamics" during Spring 2021. 

 The authors are very grateful to the anonymous referee for a thorough reading of the paper, which led to many improvements and corrections.

\section{Dispersive decay for the linear equation}
\label{s:linear}

In this section we consider the dispersive properties and the asymptotic behavior of the solutions to the associated
linear problem,
\begin{equation}\label{linear}
\left\{
\begin{aligned}
&i \partial_t u - A(D) u = 0\\ &u(0) = u_0.
\end{aligned}
\right.
\end{equation}
To avoid distracting technicalities, we will assume that the initial data $u_0$ is frequency localized in a fixed compact set.

\subsection{The fundamental solution and dispersive decay}
Denoting by $\tau$, respectively $\xi$, the time and the space Fourier variables,
the symbol of the linear operator is
\[
p(\tau,\xi) = - \tau - a(\xi), \qquad P= i \partial_t - A(D),
\]
and its characteristic set is the graph of $-a$,
\[
\text{char } P = \{ \tau = -a(\xi) \}.
\]
This is commonly referred to as the \emph{dispersion relation}. 

The associated Hamilton flow is 
\[
(x,\xi) \to (x+t a'(\xi), \xi).
\]
In particular we note here the group velocity $a'(\xi)$, which depends on the spatial frequency $\xi$ of the waves. We denote 
the range of admissible group velocities by 
\[
V = a'(\R).
\]
Here we may distinguish two different scenarios, depending on the asymptotic behavior of $a$ at infinity.
\begin{enumerate}[label=\alph*)]
\item The generalized NLS case, where $a$ has superlinear growth at $\pm \infty$, in which case $a':\R \to \R$ is surjective 
so $V = \R$, i.e. waves 
propagate with all possible velocities.

\item The generalized Klein-Gordon case, where $a$ has linear growth at $\pm \infty$, in which case the set $V$ of possible group velocities  is a bounded open interval.
\end{enumerate}
Of course, one may also differentiate between the behavior of $a$ 
at $+\infty$ and at $-\infty$, with obvious consequences.

\medskip

The  spatial Fourier transform of the fundamental solution $K(t,x)$ for \eqref{linear} is given by
\[
\hat K(t,\xi) = e^{-it a(\xi)},
\]
which yields the following oscillatory integral representation for $K$:
\[
K(t,x) = \frac{1}{2\pi} \int_\R e^{i(x\xi - ta(\xi)) } \, d\xi.
\]

By the assumption (H2), the phase function is nondegenerate, and has at most one critical point,  namely the solution $\xi$ of  the equation
\[
 x - t a'(\xi) = 0.
\]
Denoting $v = x/t$ as the velocity along a ray starting from the origin, this becomes  
\begin{equation}\label{xiv}
    v = a'(\xi).
\end{equation}
We denote the solution  of \eqref{xiv} by $\xi_v$. In the generalized NLS case  this solution exists for all real velocities $v$. But in the generalized Klein-Gordon case  the critical point exists only for $v \in V$.

Assuming that $v \in V$, the asymptotics of the 
the fundamental solution along the ray $x=vt$ can be computed using the stationary phase method, see \cite{Stein}, which yields
the asymptotic expansion 
\begin{equation} \label{fundamental}
K(t,vt) \approx \frac{1}{\sqrt{2\pi t |a''(\xi_v)|}} e^{i t \phi(v)}e^{-\frac{i\pi \sgn(a'')}4} + O(t^{-1})
\end{equation}
where the phase function $\phi(v)$ is given by 
\[
\phi(v) = v \xi_v - a(\xi_v).
\]
This holds uniformly for $v$ in compact subsets of $V$, with a more complex behavior at the endpoints and rapid decay along rays outside the closure of $V$ in the generalized Klein-Gordon case.

Since by (H2) $a$ is either convex or concave, this last expression also allows 
one to interpret $\phi$ as the Legendre transform of $a$, so that $\phi'$ and $a'$ are inverse functions, 
\begin{equation}\label{phi-prime}
\xi_v = \phi'(v).
\end{equation}

Equivalently,  $\phi$ can be thought of as the solution to the eikonal equation
\begin{equation}\label{a-vs-phi}
a(\phi_v) =  \phi -v \phi_v.
\end{equation}

More generally, one may apply the stationary phase method to compute the asymptotics for any solution $u$ to the linear equation \eqref{linear} with initial data $u_0$ with a smooth 
Fourier transform (which corresponds to a localized initial data), namely
\begin{equation}\label{linear-scattering}
 u(t,x=vt) \approx \gamma(v) \frac{1}{\sqrt{ t |a''(\xi_v)|}} e^{i t \phi(v)}e^{-\frac{i\pi \sgn(a'')}4} + O(t^{-1}) ,
\end{equation}
where the asymptotic profile $\gamma = \gamma(v)$ depends on the initial data in a straightforward fashion,
\[
\gamma(v) = \hat u_0(\xi_v).
\]

The expansion in \eqref{linear-scattering} is uniform for $v$
in a compact subset of $V$, or if $\hat u_0$ is compactly supported
or at the very least has sufficient decay at infinity.

 One  may also think of the linear dispersive decay of solutions to \eqref{linear} in a translation invariant fashion. This 
is described by the following result:

\begin{theorem}\label{t:disp}
Assume that the conditions (H1), (H2) hold.
Then the  following translation invariant decay estimates hold for frequency localized solutions to \eqref{linear}:

\begin{itemize}
\item Dispersive bounds:
\begin{equation}\label{dispersive}
\| u(t) \|_{L^\infty} \lesssim t^{-\frac12} \| u(0)\|_{L^1}.
\end{equation}

\item Strichartz estimates: 
\begin{equation}\label{Strichartz}
\| u\|_{L^4 L^\infty} \lesssim \| u(0)\|_{L^2}.
\end{equation}
\end{itemize}
\end{theorem}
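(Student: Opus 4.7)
The proof splits into the dispersive bound \eqref{dispersive} and the Strichartz bound \eqref{Strichartz}, the latter being a standard $TT^*$ consequence of the former.

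For the dispersive bound, my plan is to reduce to a kernel estimate. Since $u(t) = K(t,\cdot) * u_0$, Young's inequality gives $\|u(t)\|_{L^\infty} \leq \|K(t)\|_{L^\infty} \|u_0\|_{L^1}$, so it suffices to show $\|K(t)\|_{L^\infty} \lesssim t^{-1/2}$ in the frequency-localized setting. By the hypothesis, we may insert a smooth cutoff $\chi(\xi)$ supported on a compact set $K_0$ where $|a''| \geq c_0 > 0$, and study
\[
K_\chi(t,x) = \frac{1}{2\pi} \int_\R \chi(\xi) e^{i(x\xi - t a(\xi))} \, d\xi.
\]
The phase $\Phi(\xi) = x\xi - t a(\xi)$ has derivative $x - t a'(\xi)$, with at most one stationary point $\xi_v$ (where $v = x/t$) on $K_0$, and $|\Phi''(\xi)| = t|a''(\xi)| \gtrsim t$ there. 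A standard non-degenerate stationary phase argument (as in \cite{Stein}), combined with non-stationary integration by parts in the regime where $\xi_v$ lies outside the support of $\chi$, yields $|K_\chi(t,x)| \lesssim t^{-1/2}$ uniformly in $x$, which proves \eqref{dispersive}.

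For the Strichartz estimate, I would deploy the standard $TT^*$ argument of Ginibre-Velo/Keel-Tao. Letting $T u_0 = e^{-it A(D)} u_0$, the operator $T T^*$ acts on space-time functions by
\[
T T^* f(t,x) = \int_\R K(t-s, \cdot) * f(s,\cdot)(x) \, ds.
\]
Combining the dispersive bound with Minkowski, $\|K(t-s)*f(s)\|_{L^\infty_x} \lesssim |t-s|^{-1/2} \|f(s)\|_{L^1_x}$. The one-dimensional Hardy-Littlewood-Sobolev inequality (with $\alpha = 1/2$, giving $1/q = 1/p - 1/2$) then produces
\[
\| T T^* f \|_{L^4_t L^\infty_x} \lesssim \| f\|_{L^{4/3}_t L^1_x},
\]
and duality $(L^{4/3}L^1 = (L^4 L^\infty)^*)$ upgrades this to $T: L^2 \to L^4 L^\infty$, which is precisely \eqref{Strichartz}.

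The only technical point worth emphasizing is the uniformity of the stationary phase estimate: because $\hat u_0$ is compactly supported, we get a uniform lower bound on $|a''|$ and uniform bounds on higher derivatives of $a$ on the relevant set, so the implicit constants in the $O(t^{-1/2})$ remainder are independent of $x$. I do not expect any genuine obstacle; everything is classical once the kernel decay is established, and the frequency localization hypothesis sidesteps the additional work that would be required near critical frequencies or, in the Klein-Gordon case, near velocities outside $V$.
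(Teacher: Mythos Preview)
Your proposal is correct and matches the paper's own approach: the paper derives the dispersive bound from the pointwise decay of the frequency-localized fundamental solution via stationary phase (its equation \eqref{fundamental}), and then obtains the Strichartz estimate as a standard consequence, citing \cite{GV} and \cite{KeTa}. Your $TT^*$/Hardy--Littlewood--Sobolev outline is exactly the classical argument those references contain, so there is nothing to add.
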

The dispersive bound follows from the pointwise decay of the (frequency localized) fundamental solution, see \eqref{fundamental}.
The Strichartz estimates \eqref{Strichartz} can be seen as a direct consequence of the dispersive
estimates \eqref{dispersive} and  Young's inequality, see \cite{GV} and \cite{KeTa}.

Similar bounds hold for problems with unlocalized data, provided one adds appropriate multiplier weights depending on the asymptotic behavior of $a''$ at infinity. Some details are provided in the last section of the paper.

\subsection{Dispersion via energy estimates}

As noted above, the standard proof of the dispersive bound \eqref{dispersive} is via the pointwise bounds for the frequency localized fundamental solution, which in turn follow from the method of stationary phase, see e.g. \cite{Stein}. However,  
there is also an alternative, more robust approach via energy estimates and the vector field method.  

Precisely, using the atomic structure of the $L^1$ space, it suffices to prove the $t^{-\frac12}$ decay in \eqref{dispersive} for initial data $u_0$ which is both frequency localized and in the Schwartz space. To measure the decay,  we introduce the linear operator 
\begin{equation}\label{def:L}
L = x - t A'(D),
\end{equation}
which is the pushforward of $x$ along the corresponding linear flow.
For Schwarz data, we control
\begin{equation}
\|u_0\|_{L^2} + \| L u_0\|_{L^2}     \lesssim 1.
\end{equation}
Then we want to show that 
\begin{equation}\label{disp-decay-lin}
\|u(t)\|_{L^\infty} \lesssim t^{-\frac12}.
\end{equation}

To prove this, we observe that if $u$ solves \eqref{linear} then $Lu$ also solves \eqref{linear}.
Hence, using the conservation of the $L^2$ norm, it follows that
\begin{equation}\label{unif-en}
\|u(t)\|_{L^2} + \| L u(t)\|_{L^2}     \lesssim 1.
\end{equation}
Hence, one can think of the decay bound \eqref{disp-decay-lin} as a consequence of a  Sobolev-type interpolation bound, where the uniform norm for $u$ is estimated in terms of the uniform energy bound in \eqref{unif-en}. For later use, we will state and prove a more general statement,
where uniform dispersion is assumed globally and implicit constants are carefully controlled:

\begin{proposition}\label{p:vf}
Assume that the symbol $a(\cdot)$ satisfies the bounds
\begin{equation}
a''\approx R, \qquad |a'''| \lesssim MR,
\end{equation}
where $M$ and $R$ are  positive constants.
Then the  following estimate holds for any frequency localized function $u$:
\begin{equation}\label{vf-Sobolev}
\| u(t)\|_{L^\infty}^2 \lesssim \frac{1}{tR} (\| u\|_{L^2} \| Lu \|_{L^2}+M\|u\|_{L^2}^2).
\end{equation}
\end{proposition}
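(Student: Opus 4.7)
The plan is to work on the Fourier side, where $L$ becomes a plain derivative, and then bound a one-dimensional oscillatory integral representation of $u(t,x_0)$ via stationary phase. First, I introduce the profile $g(\xi) := e^{ita(\xi)}\hat u(t,\xi)$. Using $\widehat{xu} = i\partial_\xi \hat u$ and $\widehat{A'(D)u} = a'(\xi)\hat u(\xi)$, a direct computation gives
\[
\widehat{Lu}(\xi) \;=\; i\partial_\xi \hat u(\xi) - ta'(\xi)\hat u(\xi) \;=\; i\, e^{-ita(\xi)}\, g'(\xi),
\]
so that Plancherel yields $\|u\|_{L^2}\sim \|g\|_{L^2}$ and $\|Lu\|_{L^2}\sim \|g'\|_{L^2}$. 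In particular, the one-dimensional Sobolev embedding gives
\[
\|g\|_{L^\infty}^2 \;\lesssim\; \|g\|_{L^2}\|g'\|_{L^2} \;\sim\; \|u\|_{L^2}\|Lu\|_{L^2}.
\]

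Next, I write $u(t,x_0)$ as the oscillatory integral
\[
u(t,x_0) \;=\; \frac{1}{2\pi}\int e^{i\phi(\xi)}g(\xi)\,d\xi,\qquad \phi(\xi) := x_0\xi - ta(\xi).
\]
By hypothesis $|\phi''(\xi)|=t|a''(\xi)|\approx tR$ uniformly on the frequency support of $u$, and by (H2) the phase admits at most one stationary point $\xi_{*}$, determined by $a'(\xi_{*})=x_0/t$. A stationary phase argument then extracts a leading contribution of size $(tR)^{-1/2}|g(\xi_{*})|\leq (tR)^{-1/2}\|g\|_{L^\infty}$, and squaring and combining with the Sobolev bound above yields the main term $(tR)^{-1}\|u\|_{L^2}\|Lu\|_{L^2}$ of the estimate. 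Concretely, I would implement this via a smooth cutoff at the critical scale $|\xi-\xi_{*}|\lesssim (tR)^{-1/2}$: inside the cutoff $\phi$ is well approximated by its quadratic Taylor polynomial at $\xi_{*}$ and a direct Cauchy--Schwarz / Gaussian bound applies, while outside one integrates by parts against $\phi'$.

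The correction $M\|u\|_{L^2}^2$ arises as the stationary phase error, recording the departure of $a$ from a quadratic model at $\xi_{*}$. On the critical window $|\xi-\xi_{*}|\lesssim (tR)^{-1/2}$, the cubic Taylor remainder of $ta$ has size $t|a'''|(tR)^{-3/2}\lesssim M(tR)^{-1/2}$; in the complementary region, the integration by parts generates lower-order terms involving $\phi''/(\phi')^2$ and factors of $a'''$, which under the bound $|a'''|\lesssim MR$ produce the additional contribution of size $(tR)^{-1}M\|g\|_{L^2}^2\sim (tR)^{-1}M\|u\|_{L^2}^2$. The main obstacle is the bookkeeping in this stationary phase / integration-by-parts step: one must handle uniformly the cases where $\xi_{*}$ lies well inside the frequency support, near its boundary, or outside it altogether, and one must ensure that both the main term and the $M$-error are extracted from a single unified computation rather than a case analysis, with constants depending only on the pointwise assumptions on $a''$ and $a'''$.
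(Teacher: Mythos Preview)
Your identification of the Fourier-side profile $g(\xi)=e^{ita(\xi)}\hat u(\xi)$ and the identity $\widehat{Lu}=ie^{-ita}g'$ is correct, and so is the Sobolev bound $\|g\|_{L^\infty}^2\lesssim\|g\|_{L^2}\|g'\|_{L^2}$. The gap is in the stationary phase step: you are implicitly asserting that
\[
|u(t,x_0)|=\Big|\int e^{i\phi}g\Big|\ \lesssim\ (tR)^{-1/2}\|g\|_{L^\infty}\quad(+\ M\text{-error}),
\]
and then feeding $\|g\|_{L^\infty}$ into the Sobolev bound. But that pointwise bound is false. Take $g=e^{-i\phi}\chi$ with $\chi$ a cutoff of width $N$; then $\int e^{i\phi}g=\int\chi\sim N$ while $\|g\|_{L^\infty}=1$. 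In this example $\|g'\|_{L^2}\sim tRN^{3/2}$ and $\|g\|_{L^2}\sim N^{1/2}$, so the target bound $(tR)^{-1}\|g\|_{L^2}\|g'\|_{L^2}\sim N^2$ is \emph{saturated}, not beaten; the estimate cannot factor through $\|g\|_{L^\infty}$. Concretely, your proposed implementation (cutoff at scale $(tR)^{-1/2}$ and one integration by parts outside) produces, from the term $\int g'/\phi'$, a contribution of size $(tR)^{-3/4}\|g'\|_{L^2}$ by Cauchy--Schwarz, hence $(tR)^{-3/2}\|g'\|_{L^2}^2$ after squaring. This is \emph{not} controlled by $(tR)^{-1}\|g\|_{L^2}\|g'\|_{L^2}$ in general, and a dyadic decomposition in $|\xi-\xi_*|$ does not improve the exponent. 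The role you assign to $M$ (cubic Taylor remainder of the phase) is also not how it enters the sharp bound.

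The paper avoids this by staying on the physical side. It introduces the conjugated operator $\tilde L:=t(\partial_x-i\phi'(x/t))$, where $\phi$ is the Legendre transform of $a$, and uses the elementary identity $\partial_x|u|^2=\tfrac{2}{t}\,\mathrm{Re}(\bar u\,\tilde L u)$, which immediately gives $\|u\|_{L^\infty}^2\lesssim t^{-1}\|u\|_{L^2}\|\tilde L u\|_{L^2}$ by the fundamental theorem of calculus. The only nontrivial step is to pass from $\tilde L$ back to $L$: since $L$ and $\tilde L$ share the same characteristic set $\{\xi=\phi'(x/t)\}$, a semiclassical G\aa rding inequality (with parameter $t^{-1/2}$, after normalizing $R=M=1$ and regularizing $a$ on the $t^{1/2}$ scale using the bound on $a'''$) gives $\|\tilde L u\|_{L^2}\lesssim\|Lu\|_{L^2}+\|u\|_{L^2}$. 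The parameter $M$ thus enters through the symbol regularity required for this comparison, not as a stationary-phase remainder.
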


In particular, this yields the following result in the context of  
Theorem~\ref{t:disp}.

\begin{proposition}\label{p:vf-comp}
Assume that the conditions $(H1)$, $(H2)$ hold.
Then the  following estimate holds for any
function $u$  which is frequency localized in a fixed compact set:
\begin{equation}\label{vf-Sobolev-cor}
\| u(t)\|_{L^\infty}^2 \lesssim t^{-1} (\| u\|_{L^2} \| Lu \|_{L^2}+\|u\|_{L^2}^2)
\end{equation}
\end{proposition}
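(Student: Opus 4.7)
The plan is to deduce Proposition~\ref{p:vf-comp} as an immediate specialization of Proposition~\ref{p:vf}, by choosing the constants $R$ and $M$ there to reflect the behavior of $a$ on the fixed compact frequency support of $u$. Since that support is fixed, everything depending only on it can be absorbed into the implicit $\lesssim$ constant, and the $M\|u\|_{L^2}^2$ term in \eqref{vf-Sobolev} collapses onto the right-hand side of \eqref{vf-Sobolev-cor}.

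Concretely, let $K \subset \R$ be the fixed compact set in which $\hat u$ is supported. By (H1), $a$ is smooth, so $a''$ and $a'''$ are continuous and bounded on $K$. By (H2), $a''$ is nonvanishing on all of $\R$, hence by continuity it has constant sign, and $|a''|$ attains a positive minimum on $K$. I would then set $R := \max_K |a''|$, so that $|a''| \approx R$ on $K$, and choose $M \geq \max_K |a'''|/R$, so that $|a'''| \leq MR$ on $K$. Both $R$ and $M$ depend only on $a$ and $K$, not on $u$ or $t$, and both hypotheses of Proposition~\ref{p:vf} are thus met on the spectral support of $u$.

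Applying Proposition~\ref{p:vf} with these constants yields
$$\| u(t)\|_{L^\infty}^2 \lesssim \frac{1}{tR}\bigl(\| u\|_{L^2}\, \| Lu\|_{L^2} + M\|u\|_{L^2}^2\bigr),$$
and absorbing the fixed factors $R^{-1}$ and $M$ into $\lesssim$ gives \eqref{vf-Sobolev-cor}. There is no substantive obstacle beyond this bookkeeping; all the analytic content lies in Proposition~\ref{p:vf}. The only point I would double-check is that the notion of ``frequency localized'' in Proposition~\ref{p:vf} — namely, that the support of $\hat u$ lies in a region where $a''$ is of a definite size $R$ — is compatible with the present hypothesis of a fixed compact spectral support, which it is, since compactness together with continuity of $a''$ and its nonvanishing on $\R$ pin down admissible values of $R$ and $M$.
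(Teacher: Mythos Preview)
Your proposal is correct and matches the paper's own treatment: the paper presents Proposition~\ref{p:vf-comp} simply as the specialization of Proposition~\ref{p:vf} to a fixed compact frequency set, with no separate proof. The one small point worth making explicit is that the hypotheses $a''\approx R$, $|a'''|\lesssim MR$ in Proposition~\ref{p:vf} are stated globally, so strictly speaking you should extend or modify $a$ outside a neighborhood of $K$ to satisfy them everywhere; this is harmless since $u$ and $Lu$ depend only on $a$ restricted to $\supp \hat u$.
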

As discussed above, this in turn implies \eqref{dispersive}. Conversely, we remark that the above inequality can be obtained as a direct consequence of  \eqref{dispersive}.

\begin{proof}[Proof of Proposition~\ref{p:vf}]
We begin by observing that, by independently scaling the space and the time, both 
$R$ and $M$ can be seen as scaling parameters.  Precisely, a linear change of coordinates $x \to Mx$ reduces the problem to the case when $M=1$. Thus without any restriction in generality  we will assume $R=1$, $M=1$.

A second observation is that we can regularize the symbol $a$ on the $\sqrt{t}$ scale,
\[
\tilde a = P_{< t^{\frac12}}( D_\xi) a
\]
The bound on $a'''$ shows that 
\[
|a' - \tilde a'| \lesssim t^{-1}
\]
so that $a$ and $\tilde a$ can be used interchangeably in the Proposition.
The advantage is that $\tilde a$ satisfies higher regularity bounds
\begin{equation}\label{a-high}
|\tilde a^{(j)}| \lesssim t^{\frac{j-3}2}, \qquad j \geq 3.    
\end{equation}
From here on we will drop the $\tilde a$ notation and 
assume that $a$ satisfies \eqref{a-high}.

Next we introduce a secondary operator $\tL$, which can be used interchangeably  with $L$ in energy estimates. We recall that the symbol of $L$ is
\[
\ell(x,\xi) = x - t a'(\xi),
\]
so its characteristic set is given by 
\[
\Char L = \{ a'(\xi) = x/t \}.
\]
Using the property \eqref{phi-prime}, this can be written as 
\[
\Char L = \{ \xi = \phi'(x/t) \}.
\]

This leads us to define the operator
\[
\tL := t (\partial_x - i \phi'(x/t))  
\]
which has symbol
\[
\tilde \ell(x,\xi) = i t (\xi - \phi'(x/t)),
\]
and thus the key property that it has the same characteristic set as $L$. Since $a'$ and $\phi'$ are inverse functions, it follows directly that 
\[
 \phi'' \approx 1.
\]
In addition, from \eqref{a-high} and differentiation rules 
one also obtains
\begin{equation}\label{phi-high}
|\phi^{(j)}| \lesssim t^{\frac{j-3}2}, \qquad j \geq 3 .   
\end{equation}

To compare the two operators $L$ and $\tL$ we need the following G\"arding 
type inequality

\begin{lemma}\label{l:tL}
We have 
\[
\| \tL u \|_{L^2} \lesssim \|Lu\|_{L^2} +  \|u\|_{L^2}
.\]
\end{lemma}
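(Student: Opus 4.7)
The plan is to relate $\tL$ and $L$ by a pseudodifferential identity, grounded in the observation that their symbols share the same characteristic set $\{\xi = \phi'(x/t)\}$ and in fact differ only by a smooth, uniformly nonvanishing multiplicative factor. The starting point is a purely algebraic identity between the symbols. Taylor expanding $a'(\xi)$ around $\xi_0 = \phi'(x/t)$ and using the inversion identity $a'(\phi'(x/t)) = x/t$, one writes
\[
\ell(x,\xi) = x - ta'(\xi) = -t\bigl(\xi - \phi'(x/t)\bigr)\, c(x,\xi), \qquad c(x,\xi) := \int_0^1 a''\bigl((1-s)\phi'(x/t) + s\xi\bigr)\, ds.
\]
Since $a'' \approx 1$ after the rescaling $R = M = 1$, the amplitude $c$ is bounded above and below by positive constants. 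Rearranging this produces the pointwise symbol identity
\[
\tilde\ell(x,\xi) = -\frac{i}{c(x,\xi)}\,\ell(x,\xi).
\]

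The next step is to promote this to an operator identity. Setting $G := \operatorname{Op}(-i/c)$ and invoking the Kohn--Nirenberg composition formula, the fact that $\partial_x^\alpha \ell = 0$ for $|\alpha| \ge 2$ reduces the composition to two terms:
\[
G L = \operatorname{Op}\bigl((-i/c)\,\ell\bigr) + \operatorname{Op}\bigl(\partial_\xi c/c^2\bigr) = \tL + E,
\]
where $E := \operatorname{Op}(\partial_\xi c/c^2)$ is an order-zero remainder. Writing $\tL = GL - E$, the triangle inequality then yields
\[
\|\tL u\|_{L^2} \le \|G\|_{L^2 \to L^2}\,\|L u\|_{L^2} + \|E\|_{L^2 \to L^2}\,\|u\|_{L^2},
\]
which is the desired bound provided that $G$ and $E$ are $L^2$-bounded with constants independent of $t$.

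The main technical obstacle is precisely this uniform $L^2$-boundedness. Because higher $\xi$-derivatives of $c$ involve $a^{(k)}$ for $k \ge 4$, which grow polynomially in $t$ at the classical level, one cannot apply Calder\'on--Vaillancourt directly in the standard $S^0_{1,0}$ class. Instead the argument must be set up in the semiclassical symbol class adapted to the scale $h = 1/\sqrt{t}$; in this calculus the regularity bounds \eqref{a-high} and \eqref{phi-high} (available thanks to the $\sqrt{t}$-scale regularization of $a$ recalled just before the lemma) are exactly the $S(1)$-type uniform bounds that the calculus requires, and both $G$ and $E$ become $L^2$-bounded uniformly in $t$. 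The frequency localization of $u$ permits a harmless truncation of $-i/c$ to a bounded $\xi$-range, streamlining these bounds and completing the proof.
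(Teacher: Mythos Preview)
Your argument is correct and takes a genuinely different route from the paper's. The paper observes only the pointwise symbol domination $|\tilde\ell|\lesssim|\ell|$ and then invokes a black-box G\aa rding-type inequality (Theorem~\ref{t:garding}, taken from \cite{T-FF}) with semiclassical parameter $\mu=t^{-1/2}$; all of the semiclassical analysis is hidden inside that citation. You instead exploit the sharper relation $\tilde\ell=(-i/c)\,\ell$ and the linearity of $\ell$ in $x$ to obtain an \emph{exact} operator factorization $\tL=GL-E$ via the Kohn--Nirenberg composition formula, and then bound $G$ and $E$ by a semiclassical Calder\'on--Vaillancourt argument (after the symplectic rescaling $(x,\xi)\mapsto(t^{-1/2}x,t^{1/2}\xi)$, the symbols $-i/c$ and $\partial_\xi c/c^2$ land in $S^0_{0,0}$ uniformly in $t$, precisely thanks to \eqref{a-high} and \eqref{phi-high}). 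Your approach is more explicit and avoids the specialized G\aa rding inequality of \cite{T-FF}, needing only the standard $L^2$-boundedness theorem; the paper's approach is shorter because it outsources the work to a single reference. One small remark: the frequency localization you mention at the end is not actually needed once the regularization \eqref{a-high} is in place, since the symbol bounds for $c$ then hold globally in $\xi$.
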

The conclusion of the lemma is a direct consequence of the  corresponding symbol bound
\[
|\tilde{\ell}(x,\xi)|\lesssim |\ell(x,\xi)|
\]
via a semiclassical form of G\"arding's inequality. Precisely, we will directly invoke \cite[Theorem~3]{T-FF}, with the semiclassical parameter $\mu = t^{-\frac12}$. For convenience, we recall it here:

\begin{theorem}{\cite[Theorem~3]{T-FF}}    \label{t:garding}
Let $\mu > 0$ and $a_j,b$ be real symbols which satisfy  \begin{equation}
  \label{a2}
  |\partial_x^\alpha \partial_\xi^\beta a(x,\xi)| \leq c_{\alpha\beta} \mu^{\frac{|\alpha|-|\beta|}2},
\ \ \ \ |\alpha|+|\beta|\geq 2 
\end{equation}
so that
$ |b| \leq  \sum |a_j| $. Then 
\begin{equation}
\|B^w(x,D) u\|_{L^2} \lesssim \sum 
\|A_j^w(x,D) u\|_{L^2} + \|u\|_{L^2}\,.
\label{garding}\end{equation}
\end{theorem}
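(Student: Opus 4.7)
The plan is to first reduce to the classical semiclassical scale $\mu=1$ by rescaling, and then to use an FBI (or Gaussian wave packet) transform to convert the Weyl quantizations into approximate multiplication operators on the phase space side; once there, the pointwise symbol inequality $|b|\leq \sum |a_j|$ yields the operator bound essentially for free.

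First I would perform the standard semiclassical rescaling $x\to \mu^{-1/2}x$, which dualizes to $\xi\to \mu^{1/2}\xi$ and preserves the Weyl calculus up to conjugation by an $L^2$ isometry. Under this change of variables the hypothesis \eqref{a2} becomes the flat bound $|\partial_x^\alpha\partial_\xi^\beta a|\lesssim 1$ for $|\alpha|+|\beta|\geq 2$, and the pointwise inequality $|b|\leq \sum |a_j|$ is preserved. Hence without loss of generality we may assume $\mu=1$ from now on.

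Next I would introduce an FBI transform $T:L^2(\R)\to L^2(\R^2)$ based on unit-scale Gaussian wave packets, which is an isometry onto its closed image. The central estimate to establish is that, for any real symbol $a$ obeying the flat second-derivative bounds, the Weyl quantization is a multiplication operator on the FBI side modulo an $L^2$-bounded remainder, namely
\[
T A^w u = a\cdot Tu + E_a u, \qquad \|E_a u\|_{L^2(\R^2)}\lesssim \|u\|_{L^2(\R)},
\]
with implicit constant depending only on the $c_{\alpha\beta}$ for $|\alpha|+|\beta|=2$. This identity is obtained by computing the integral kernel of $TA^w T^*-M_a$ in the FBI coordinates and integrating by parts against the Gaussian window, which converts phase-space distance into derivatives of $a$; the hypothesis that the second derivatives of $a$ are bounded is precisely what is needed so that the resulting oscillatory integral converges with a remainder of the advertised size, and all higher-order contributions are absorbed into rapidly decaying Gaussian tails.

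Once these approximate diagonalizations are in hand for both $b$ and each $a_j$, the pointwise hypothesis combined with finiteness of the index set gives
\[
\|b\cdot Tu\|_{L^2(\R^2)}^2 \lesssim \sum_j \|a_j\cdot Tu\|_{L^2(\R^2)}^2,
\]
which together with the error bounds and the isometry of $T$ yields \eqref{garding}. The main obstacle will be the FBI conjugation step: producing a remainder controlled purely in terms of second derivatives of the symbol is the sharp threshold at which the calculus closes, and it requires carefully tracking the expansion of $TA^wT^*$ and using the Weyl (rather than Kohn--Nirenberg) quantization to kill the first-order terms by symmetry, so that only the harmless second-order pieces appear in $E_a$.
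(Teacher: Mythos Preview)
The paper does not prove this theorem; it is quoted verbatim from \cite{T-FF} and used as a black box (see the sentence immediately preceding the statement: ``we will directly invoke \cite[Theorem~3]{T-FF} \ldots\ For convenience, we recall it here''). So there is no proof in the paper to compare against.

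That said, your sketch is essentially the argument of \cite{T-FF} itself: rescale to $\mu=1$, conjugate by a Gaussian FBI transform so that $TA^w = M_a T + E_a$ with $E_a$ bounded in terms of the second derivatives of $a$ (the linear Taylor terms cancel exactly because of the Weyl symmetrization), and then the pointwise bound $|b|\leq \sum |a_j|$ transfers directly to the multiplication operators on the phase-space side. The only point worth flagging is that your remainder bound really does close using only $|\alpha|+|\beta|=2$ control, via the integral form of the second-order Taylor remainder against the Gaussian window; you do not need the higher $c_{\alpha\beta}$ for this step, and it is worth saying so explicitly since the hypothesis \eqref{a2} lists them.
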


The fact that this is stated in the Weyl calculus makes no difference in our context. We also refer the reader to \cite{Delort-KG}, where a similar bound is derived using semiclassical calculus.

Now we return to the proof of Proposition~\ref{p:vf}, where
as discussed above we may substitute $L$ by $\tilde L$. Then
all we need is a simple integration, based on the relation
\[
\frac{d}{dx} |u|^2 = \frac2t \Im ( \bar{u} \tL u) .
\]
This yields 
\[
\|u\|_{L^\infty}^2 \lesssim \frac1t \|u\|_{L^2} \|\tL u\|_{L^2} ,
\]
thus completing the proof.

\end{proof}

We conclude this section with one last observation, which is that,
if $u$ is assumed to be frequency localized in some compact interval $I$,
then both the $L^2$ and the pointwise bounds for $u$ are better outside a neighbourhood of the the velocity range $J = a'(I)$. The following result clarifies the proper localization scales. 
The analysis is identical to the left and to the right of $J$. 
Hence, in order to set the notations, we fix a frequency $\xi_0$, the corresponding group velocity 
$v_0 = a'(\xi_0)$ and the associated position at time $t$, $x_0 = t v_0$. We also assume without any loss of generality that $a$ is convex.

\begin{proposition}\label{p:vf-ell}
In the context of Proposition~\ref{p:vf}, assume in addition that
$u$ is frequency localized in $I = \{ \xi< \xi_0\}$. Then we have better bounds for 
$u$ outside $J = \{ x < x_0\}$, as follows:

a) $L^2$ bounds:
\begin{equation}\label{vf-ell-L2}
\| (x-x_0)_+ u\|_{L^2} \lesssim \|Lu\|_{L^2} + M \|u\|_{L^2},    
\end{equation}

b) $L^\infty$ bounds
\begin{equation}\label{vf-ell-Linf}
|u(x)|^2 \lesssim \frac{1}{|x-x_0| R t}  ( \|Lu\|_{L^2} + M \|u\|_{L^2})^2, \qquad  
x > x_0.
\end{equation}
\end{proposition}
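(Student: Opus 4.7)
The plan is to first reduce to the normalized case $R = M = 1$ by the same rescaling argument used at the start of the proof of Proposition~\ref{p:vf}. In this reduction the two target inequalities become
\[
\|(x-x_0)_+ u\|_{L^2} \lesssim \|Lu\|_{L^2} + \|u\|_{L^2},\qquad |u(x)|^2 \lesssim \frac{1}{(x-x_0)t}\bigl(\|Lu\|_{L^2}+\|u\|_{L^2}\bigr)^2.
\]

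For part (a), I would invoke the semiclassical G\aa{}rding inequality (Theorem~\ref{t:garding}) with semiclassical parameter $\mu = t^{-1/2}$, comparing the symbol $b(x,\xi) = (x-x_0)_+\,\chi(\xi)$---with $\chi$ a smooth cutoff equal to $1$ on $\operatorname{supp}\hat u$ and vanishing on $\{\xi \geq \xi_0\}$---against $\tilde\ell(x,\xi) = it(\xi-\phi'(x/t))$, the symbol of $\tilde L$. The essential pointwise inequality is the lower bound
\[
|\tilde\ell(x,\xi)| = t(\phi'(x/t)-\xi) = t\bigl(\phi'(x/t)-\phi'(v_0)\bigr) + t(\xi_0-\xi) \gtrsim (x-x_0)_+ + t(\xi_0-\xi)_+
\]
valid on $\xi\leq\xi_0$ and $x\geq x_0$; this uses $\phi'(v_0) = \xi_0$, $\phi''\approx 1$, and $x_0 = tv_0$. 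Since $b \lesssim |\tilde\ell|$ on the Fourier support of $u$, smoothing $(x-x_0)_+$ on the natural semiclassical scale $t^{1/2}$ and applying G\aa{}rding gives $\|(x-x_0)_+ u\|_{L^2} \lesssim \|\tilde L u\|_{L^2} + \|u\|_{L^2}$. Lemma~\ref{l:tL} then converts $\tilde L$ to $L$.

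For part (b), I would carry out a one-sided version of the vector-field Sobolev argument in Proposition~\ref{p:vf}. Starting from $\partial_y|u(y)|^2 = \tfrac{2}{t}\operatorname{Re}(\bar u\,\tilde L u)(y)$ (equivalent up to sign to the identity used there, since $\tilde L = t e^{it\phi(x/t)}\partial_x e^{-it\phi(x/t)}$) and integrating from $x>x_0$ to $+\infty$ yields
\[
|u(x)|^2 \leq \frac{2}{t}\,\|\mathbf{1}_{y>x}\,u\|_{L^2}\,\|\tilde L u\|_{L^2}.
\]
For $y > x > x_0$ the trivial inequality $|u(y)| \leq \tfrac{y-x_0}{x-x_0}|u(y)|$ gives $\|\mathbf{1}_{y>x}u\|_{L^2} \leq (x-x_0)^{-1}\|(y-x_0)_+ u\|_{L^2}$, and combining with part (a) and Lemma~\ref{l:tL} produces the claimed pointwise bound.

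The main obstacle lies in the G\aa{}rding step of part (a). The pointwise comparison is clean, but the symbol $b(x,\xi) = (x-x_0)_+\chi(\xi)$ is Lipschitz (not smooth) in $x$ and unbounded in the $x$ variable, so its $\xi$-derivatives (through $\chi'$) carry an unbounded factor; as stated, the semiclassical regularity hypothesis \eqref{a2} fails for $b$. To repair this I would exploit the stronger companion bound $|\tilde\ell| \gtrsim t(\xi_0-\xi)_+$, which dominates $b$ precisely where the cutoff $\chi$ transitions, and either insert an auxiliary comparison symbol into the $\sum_j |a_j|$ of Theorem~\ref{t:garding} or decompose $b$ dyadically in $x-x_0$ so that each piece is semiclassically regular on the scale dictated by its $x$-support and is still dominated by $|\tilde\ell|$.
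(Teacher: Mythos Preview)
Your strategy for part (a) is the paper's strategy, and the obstacle you flag is exactly the one the paper has to deal with. The paper's resolution is the first of your two suggested fixes: it takes $b(x,\xi)=(x-x_0)\chi(x)$ depending on $x$ only, with $\chi$ a spatial cutoff smoothed on the semiclassical scale $r\approx t^{1/2}$, and inserts an auxiliary comparison symbol $a_2(\xi)=t(\xi-\xi_0)\eta(\xi-\xi_0)$ on the right-hand side of Theorem~\ref{t:garding}, with $\eta$ a smooth step at scale $\rho\approx t^{-1/2}$. The frequency localization of $u$ then kills the $a_2$ contribution, and the symbol inequality $(x-x_0)\chi(x)\lesssim t(\xi-\xi_0)\eta(\xi-\xi_0)+|x-ta'(\xi)|$ is checked casewise. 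So your outline is correct once this choice is made; the paper compares directly against $\ell$ rather than $\tilde\ell$, saving the appeal to Lemma~\ref{l:tL}.

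Your argument for part (b) is genuinely different from the paper's and cleaner. The paper splits into a far region $x>x_0+t^{1/2}$, where \eqref{vf-ell-Linf} follows from part (a) together with a companion bound $t\|\chi(D-\xi_0)u\|_{L^2}\lesssim\|Lu\|_{L^2}+\|u\|_{L^2}$, and a near region $|x-x_0|\lesssim t^{1/2}$, handled by integrating $|d|u|/dx|\leq t^{-1}|\tL u|$ from a good point. Your one-sided integration of $\partial_y|u|^2$ from $x$ to $+\infty$, combined with the pointwise inequality $\mathbf 1_{y>x}\leq (y-x_0)/(x-x_0)$ and part (a), gives the full range $x>x_0$ in one stroke with no case split. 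The only thing to add is that $|u(y)|^2\to 0$ as $y\to\infty$, which follows since $\partial_y|u|^2=\tfrac{2}{t}\,\mathrm{Re}(\bar u\,\tL u)\in L^1$ and $u\in L^2$.
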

We remark that the bounds in the proposition are only interesting in the region
$\{ x - x_0 > (Rt)^\frac12 \}$. Closer to $x_0$, and in effect in the full region
$\{ |x - x_0| \lesssim (Rt)^\frac12 \}$ they can be replaced by 
\begin{equation}\label{vf-ell-near}
|u(x)|^2 \lesssim \frac{1}{ (R t)^\frac32}  ( \|Lu\|_{L^2} + M \|u\|_{L^2})^2, \qquad  
|x - x_0|  \lesssim (Rt)^\frac12.
\end{equation}

\begin{proof}
We first note that $R$ and $M$ in the hypothesis of the proposition are scaling parameters,
and we can simply set them equal to $1$.

Let $\chi=\chi(x)$ be a spatial cutoff function supported outside $J$, smooth on the $r$ scale and equal to $1$
in $[x_0+r,\infty)$, where
$r > 0$ is a parameter which will be chosen later as $r = t^{\frac12}$. We will establish the stronger $L^2$ bound
\begin{equation}\label{loc}
\| (x-x_0) \chi u \|_{L^2} + t \| \chi (D - \xi_0)  u \|_{L^2} \lesssim \|Lu\|_{L^2} + \|u \|_{L^2} 
\end{equation}
for $u$ frequency localized in $\xi < \xi_0$. To start with, we verify 
that this implies the bounds in the proposition. This is immediate for 
$x > x_0+r$, so we need to cover the remaining range. It suffices to show that
\[
|u(x)| \lesssim t^{-\frac34}(\|Lu\|_{L^2} + \|u \|_{L^2}), \qquad |x-x_0| \lesssim r,
\]
which is in effect exactly the bound \eqref{vf-ell-near}.

We already know this if $x-x_0 \approx r$. To capture the remaining range  we recall that we can use the operators $L$ and $\tL$ interchangeably in these bounds. Our starting point is the straightforward relation
\[
\left|\frac{d}{dx} |u|\right|
\lesssim \left|\frac{d}{dx} e^{-it \phi(x/t)} u\right|
= \frac{1}{t} |\tL u|.
\]
Then for $x_1$ in the full range $|x_1-x_0| \lesssim r$ we write by the fundamental theorem of calculus
\[
||u(x_1)| - |u(x)|| \lesssim t^{-1} \int_{x_1}^x |\tL u| dy \lesssim t^{-1} |x_1-x|^\frac12
\|\tL u\|_{L^2} \lesssim t^{-\frac34},
\]
which suffices. It remains to prove \eqref{loc}.

First, using the form of the operator $\tL$, we write
\[
-i\tL = t(D-\xi_0) - t(\phi'(x/t) -\phi'(x_0/t))  
\]
and estimate
\[
t \| \chi(D - \xi_0)  u \|_{L^2} \lesssim \| (x-x_0) \chi u \|_{L^2} + \|\tL u\|_{L^2}
\]
and thus reduce the bound \eqref{loc} to 
\begin{equation}\label{loc-a}
\| (x-x_0) \chi u \|_{L^2}  \lesssim \|u \|_{L^2} + \|\tL u\|_{L^2}.
\end{equation}
The next step is to discard the frequency localization, by adding a term on the right 
\begin{equation}\label{loc-b}
\| (x-x_0) \chi u \|_{L^2}  \lesssim \|u \|_{L^2} + \|t (D-\xi_0)\eta(D-\xi_0) u\|_{L^2} + \|\tL u\|_{L^2},
\end{equation}
where $\eta$ selects the region $\xi-\xi_0 > \rho$, 
with the parameter $\rho > 0$ to be chosen later 
of size $\rho \approx t^{-\frac12}$.

Here we use again the G\"arding type inequality \eqref{garding} with $\mu = t^{-\frac12}$.
At the symbol level, we need to verify that
\begin{equation}\label{loc-c}
(x-x_0) \chi  \lesssim t (\xi-\xi_0) \eta(\xi-\xi_0)
+ |x - t a'(\xi)|.
\end{equation}
If $\xi > \xi_0 + \rho$ then $\eta = 1$ and the inequality is directly verified without $\chi$. Else, for $x > x_0+r$ and $\xi < \xi_0+\rho$
we have 
\[ 
x - t a'(\xi) > x-x_0  - C t \rho  > r -Ct\rho, \qquad C = \sup_\xi  a''(\xi),
\]
which suffices provided that $ r \geq 2Ct\rho$.

It remains to ensure that we have the correct symbol regularity,
as required by Theorem~\ref{t:garding} with $\mu = t^{-\frac12}$.
This is indeed the case provided that 
\[
r^{-1} \lesssim t^{-\frac12}, \qquad  \rho^{-1} \lesssim t^\frac12  .
\]
To satisfy all of the above requirements it suffices to choose 
$r$ and $\rho$ so that
\[
r \approx t^{\frac12}, \qquad \rho \approx t^{-\frac12}, \qquad r \geq 2Ct\rho.  
\]
Then the $L^2$ bound \eqref{loc-b} follows from the symbol bound \eqref{loc-c}, and the proof of the proposition is complete. 
\end{proof}

\section{The asymptotic equation for the nonlinear problem}
\label{s:asymptotic}

We begin the discussion by recalling the asymptotic 
behavior of solutions for the linear flow \eqref{linear},
namely 
\begin{equation}\label{wish}
u(t,x) \approx \frac{1}{\sqrt{t}}\gamma(v) e^{i t \phi(v)}, 
\end{equation}
and ask whether such a pattern is also possible 
for the nonlinear flow \eqref{main}. This would require
the cubic term in the equation to play a perturbative 
role near infinity. 

However, a heuristic computation shows that this cannot happen. To see that, suppose, 
more generally, that for the solution $u$ we have 
an asymptotic representation of the form
\[
u(t,x) \approx \frac{1}{\sqrt{t}} \gamma(t,v) e^{i t \phi(v)}, 
\]
where $\gamma$ is a smooth function of $v$, uniformly in $t$. Then, at $(t,x)$, this solution has spatial frequency close to  
\[
\xi_v = \phi'(v).
\]
Expanding the symbol for $A$ in a Taylor series around $\xi = \xi_v$ we obtain
\[
a(\xi) = a(\xi_v) +  a'(\xi_v)(\xi-\xi_v)+ 
\frac12 
a''(\xi_v)(\xi-\xi_v)^2 + O(\xi-\xi_v)^3,
\]
which at the operator level yields
\[
A(D) = a(\xi_v) +  a'(\xi_v)(i \partial_x -\xi_v)+ 
\frac{i}2 a''(\xi_v) \phi''(v) + O(t^{-2}) + O(t^{-1}) (i \partial_x -\xi_v) + O(1)  (i \partial_x -\xi_v)^2.
\]
This further simplifies since $\phi'$ and $a'$ are inverse functions, so $a''(\xi_v) \phi''(v)=1$. Hence we obtain the semiclassical formula 
\[
A(D) u \approx t^{-\frac12} e^{i t \psi(v)}\left( a(\xi_v)\gamma(t,v) + i t^{-1}  a'(\xi_v) \gamma'(t,v) + \frac{i}{2} t^{-1}  + O(t^{-2})\right).
\]
A similar but simpler computation shows that 
\[
Q(u,\bar u, u) = t^{-\frac12} e^{i t \psi(v)} 
\left( t^{-1} q(\xi_v,\xi_v,\xi_v)  \gamma(t,v) |\gamma(t,v)|^2 + O(t^{-2})\right).
\]
Finally, by chain rule,
\[
i \partial_t u = t^{-\frac12} e^{i t \psi(v)} \left(
-\gamma(t,v)(\phi(v) - v \phi'(v)) - i t^{-1} v \gamma'(v) - \frac{i}{2} t^{-1} + i \gamma_t\right). \]
Substituting the last three relations into the equation, we 
cancel the leading terms using the eikonal equation \eqref{a-vs-phi}
(which justifies the phase in our ansatz in the first place),
and the $\gamma'$ terms using \eqref{xiv}.
This leaves us with the relation
\[
i \gamma_t = t^{-1} q(\xi_v,\xi_v,\xi_v) \gamma |\gamma|^2 + O(t^{-2}).
\]
Since $t^{-1}$ is not integrable at infinity, we see that 
it is not possible for the function $\gamma$ to have a nontrivial limit at infinity, which justifies our earlier claim that an asymptotic behavior as in \eqref{wish} cannot hold for the nonlinear evolution.

However, all is not lost. We can ensure that the last relation is satisfied if we allow a very mild dependence of $\gamma$ on $t$, 
precisely if we set $\gamma$ to satisfy the \emph{asymptotic equation}
\begin{equation}\label{asymptotic-t}
 i \gamma_t = t^{-1} q(\xi_v,\xi_v,\xi_v) \gamma |\gamma|^2   .
\end{equation}
This is an ode which only has global solutions provided that 
\[
\Im q(\xi,\xi,\xi) \leq 0, \qquad \xi \in \R .
\]
The case when $\Im q(\xi,\xi,\xi) < 0$ corresponds to a damping nonlinearity, and solutions for the asymptotic equation which decay to $0$. The more interesting case, which we will refer to as the conservative case, is when $q$ is real on the diagonal. In this case,
the solutions to the asymptotic equation \eqref{asymptotic-t}
have constant amplitude. 

In all cases, we remark that the asymptotic equation can be converted into an autonomous evolution with an exponential substitution, 
$t = e^s$. Then \eqref{asymptotic-t} becomes
\begin{equation}\label{asymptotic-s}
 i \gamma_s =  q(\xi_v,\xi_v,\xi_v) \gamma |\gamma|^2.   
\end{equation}

Hence, the objective of the analysis becomes to show that 
the solutions to \eqref{main} with small and localized data 
have the asymptotic behavior
\begin{equation}\label{u-asympt}
 u(t,x) \approx \frac{1}{\sqrt{t}} \gamma(\ln t,v) e^{i t \psi(v)},    
\end{equation}
where $\gamma$ solves the asymptotic equation \eqref{asymptotic-s}.
This has solutions of the form
\begin{equation}
\gamma(s,v) = e^{-is q(\xi_v,\xi_v,\xi_v)|\gamma_0(v)|^2} \gamma_0(v),   \end{equation}
depending on a function $\gamma_0$, which we will call 
the \emph{scattering profile} for the solution $u$. We will refer to such an asymptotic behavior as \emph{modified scattering}.

We remark that in this case we cannot expect $\gamma$ to be uniformly regular in $v$ as $s \to \infty$. However, this is harmless from the 
perspective of any asymptotic computation as above, as it only yields extra $\log t$ factors.

To summarize, we conclude that the objective of any asymptotic
analysis for the equation \eqref{main} is two-fold:

\begin{enumerate}[label=\alph*)]
\item Make a good choice for the profile $\gamma$, so that \eqref{u-asympt} holds.

\item Show that $\gamma$ approximately solves the asymptotic equation
\eqref{asymptotic-s}.
\end{enumerate}

The goal of these notes is to describe the method of \emph{testing by wave packets}, introduced by the authors in the context of a model NLS  problem in \cite{NLS} and then applied to quasilinear water wave evolutions in \cite{IT-global}, \cite{IT-t}. This method is applied here
in combination with energy estimates, which also raise some interesting questions due to the generality of the model considered, i.e. without any direct conservation laws.

\subsection{Asymptotic equations in the NLS context}
To set the stage for the presentation of our method, we will 
begin by first describing several alternative ideas which were 
proposed over the years in the context of the cubic NLS problem in one space dimension,
\begin{equation}\label{NLS}
i u_t - \frac12 \partial_x^2 u = \pm u |u|^2.    
\end{equation}
There one may take $a(\xi)= \frac12\xi^2$, in which case $\phi(v) = \frac12 v^2$ and $\xi_v=v$.

\begin{itemize}
\item[A.] Asymptotic equation in the Fourier space,
introduced by Hayashi-Naumkin~\cite{HN}, and refined by Kato-Pusateri~\cite{KP}. This is based on the idea that, taking 
a Fourier transform in an asymptotic formula like \eqref{u-asympt},
one obtains a related asymptotic for $\hat u$,
namely 
\[
\hat u(t,\xi) \approx \gamma(t,\xi) e^{-\frac{i}{2} t \xi^2}.
\]
Defining $\gamma$ by
\[
\gamma(t,\xi) = e^{\frac{i}{2} t \xi^2}u(t,\xi),
\]
one then seeks an asymptotic equation for the Fourier transform of the solutions,
\[
\frac{d}{dt} \hat u(t,\xi) = -i\xi^2 \hat u(t,\xi) + 
\pm  it^{-1}\, \hat u(t,\xi) | \hat{u}(t,\xi)|^2 + O_{L^{\infty}}(t^{-1-\epsilon}),
\]
where the first, respectively the second term on the right correspond to the linear, respectively the nonlinear part of the equation \eqref{main}.

\bigskip
\item[B.] Asymptotic equation in the physical space, introduced by Lindblad-Soffer~\cite{LS}; here the goal is to derive an asymptotic equation in the physical space along rays,
\[
(t \partial_t + x \partial_x+\frac12)   u(t,x) = 
\pm i t  u(t,x) | u(t,x)|^2 + O_{L^\infty}(t^{-\epsilon}),
\]
where the left hand side represents the linear contribution, while the right hand side represents the nonlinear contribution.

\bigskip
\item[C.] Nonlinear Fourier methods, developed by Deift-Zhou~\cite{MR1207209}, who used complete integrability and inverse scattering to obtain long range asymptotics via the steepest descent method. Unfortunately, these 
ideas are only available in the integrable case.

\bigskip

 \item[D.]  The {\em wave packet testing} method, introduced by the authors in \cite{NLS,IT-global,IT-t}, starts from the observation 
 that the methods described in A. and B. above lack balance
 when it comes to estimating the errors in the asymptotic equation.
 Working on the Fourier side, there are no linear errors but the nonlinear errors are large. On the physical side, there are no nonlinear errors,  but instead the linear errors are large. This led to the idea of looking 
 for a balanced way of defining the asymptotic profile $\gamma$, where  the linear and nonlinear errors are smaller and comparable. This is achieved by  testing the 
NLS solution with an approximate wave packet type linear wave $\uu_v$,
\[
\gamma(t,v) = \langle u, \uu_v \rangle_{L^2},
\]
where $\uu_v$ is both spatially localized in a $t^\frac12$ neighbourhood of the ray $x = vt$, and frequency localized in a dual $t^{-\frac12}$
neighbourhood of the frequency $\xi_v = v$. This perfectly balances
the linear and the nonlinear errors, and leads to results which 
are near optimal with respect to the regularity and decay of the initial data.
\end{itemize}

\section{ Global solutions for small localized data: the model case}
\label{s:compact}

In order to avoid distracting technicalities, in our first result we will make the simplifying assumption 
\begin{enumerate}
\item[(H5)] \emph{Frequency localized nonlinearity.} The symbol $q$ is compactly supported.
\end{enumerate}
This assumption makes the behavior of $a$ at infinity irrelevant.
Using the operator $L$, we define the following weighted time dependent
function space $X$:
\[
\| u(t)\|_{X}^2 := \|u(t)\|_{L^2} + \| Lu(t) \|_{L^2}^2.
\]
This will be used both for the initial data, and in order to measure the solution as it evolves in time. In particular, at time $t=0$ the $X$ norm measures the localization of  the initial data $u_0$,
\[
\|u_0\|_{X} \approx \| (1+x^2)^\frac12 u_0 \|_{L^2}.
\]
With these notations, our main result is as follows:

\begin{theorem}\label{t:comp}
  Assume that the conditions (H1-5) above are satisfied,
and that the  initial data for our equation \eqref{main}
satisfies:
\begin{equation}\label{small}
\| u(0)\|_{X} \lesssim \epsilon \ll 1.
\end{equation}
Then the solution exists globally  in time, with 
energy bounds 
\begin{equation}\label{energy}
\| u(t)\|_{X}  \lesssim \epsilon t^{C \epsilon^2},
\end{equation}
and pointwise decay 
\begin{equation}\label{disp-decay}
\| u(t)\|_{L^\infty}  \lesssim \frac{\epsilon}{\sqrt t}.
\end{equation} 
\end{theorem}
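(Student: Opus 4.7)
The plan is to close a bootstrap argument on $[0,T]$, assuming the energy bound \eqref{energy} and the pointwise bound \eqref{disp-decay} hold with some large constant $C$, and then recovering them with constant $C/2$. Local well-posedness, which is standard since $Q$ is smooth and compactly frequency-localized by (H5), ensures $T>0$ and allows one to continue the solution as long as the bootstrap holds. There are three quantities to propagate: $\|u(t)\|_{L^2}$, $\|Lu(t)\|_{L^2}$, and $\|u(t)\|_{L^\infty}$. The first two are controlled by energy methods combined with the bootstrap pointwise decay, while the last requires the wave packet testing mechanism of Section~\ref{s:asymptotic} in order to avoid a $t^{C\epsilon^2}$ loss.

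For the $L^2$ bound I would differentiate $\|u\|_{L^2}^2$ in time: the linear contribution vanishes by (H1), and the cubic nonlinear contribution is bounded using the pointwise bootstrap assumption by
\[
\tfrac{d}{dt}\|u\|_{L^2}^2 \lesssim \|u\|_{L^\infty}^2 \|u\|_{L^2}^2 \lesssim \tfrac{\epsilon^2}{t} \|u\|_{L^2}^2,
\]
which integrates to the desired $t^{C\epsilon^2}$ growth. For $Lu$ I would use the fact that $L$ commutes with the linear operator $i\partial_t - A(D)$, so $Lu$ satisfies the same equation with inhomogeneity $LQ(u,\bar u, u)$. A direct computation (distributing the vector field $x$ across the trilinear convolution and using compact support of $q$ to absorb symbol derivatives) shows $\|LQ(u,\bar u, u)\|_{L^2} \lesssim \|u\|_{L^\infty}^2 (\|Lu\|_{L^2} + \|u\|_{L^2})$, so $\tfrac{d}{dt}\|Lu\|_{L^2} \lesssim t^{-1}\epsilon^2 \|u\|_X$ and the $X$-norm bound closes with the same $t^{C\epsilon^2}$ growth.

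The crux is the sharp pointwise bound. Applying the vector field bound \eqref{vf-Sobolev-cor} naively yields only $\|u\|_{L^\infty} \lesssim \epsilon t^{-1/2}t^{C\epsilon^2}$, losing the sharp rate. To remove the growth factor I would implement \emph{wave packet testing}: for each velocity $v$, build an approximate linear wave packet $\uu_v(t,x)$ frequency-localized near $\xi_v = (a')^{-1}(v)$ at scale $t^{-1/2}$, spatially localized near $x=vt$ at scale $t^{1/2}$, with unit $L^2$ mass and carrying the phase $e^{it\phi(v)}$, and define the asymptotic profile
\[
\gamma(t,v) := \langle u(t), \uu_v(t) \rangle_{L^2}.
\]
The argument then splits into two parallel statements. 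First, a \emph{reconstruction bound}
\[
|u(t,vt) - t^{-1/2}\gamma(t,v)e^{it\phi(v)}| \lesssim t^{-1/2-\delta}\|u\|_X,
\]
proved by decomposing $u$ into wave packets and invoking Proposition~\ref{p:vf-ell} to dispose of the far-field. Second, an \emph{asymptotic ODE}
\[
i\gamma_t(t,v) = t^{-1} q(\xi_v,\xi_v,\xi_v)\,\gamma|\gamma|^2 + O(\epsilon^3 t^{-1-\delta}),
\]
which by (H4) preserves $|\gamma|$ up to an integrable error, giving $|\gamma|\lesssim \epsilon$ uniformly. Combined with the reconstruction bound, this yields $\|u\|_{L^\infty} \lesssim \epsilon/\sqrt{t}$ with no growth factor, closing the bootstrap.

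The main obstacle is deriving the asymptotic ODE with a remainder integrable in $t$. Computing $\partial_t \gamma = \langle \partial_t u, \uu_v\rangle + \langle u, \partial_t \uu_v\rangle$ and using \eqref{main}, the right-hand side splits into a \emph{nonlinear resonant term}, which after substituting the wave packet ansatz for $u$ and evaluating the resulting trilinear oscillatory integral by stationary phase produces exactly $-it^{-1}q(\xi_v,\xi_v,\xi_v)\gamma|\gamma|^2$, plus a \emph{linear error} measuring the failure of $\uu_v$ to exactly solve \eqref{linear}. The whole power of the method lies in the freedom to design $\uu_v$ so that the linear and nonlinear errors are simultaneously $O(t^{-1-\delta})$ when tested against an $X$-bounded function; verifying this balance quantitatively, using Proposition~\ref{p:vf-ell} to localize $u$ near the ray $x=vt$ and the bootstrap decay to control the trilinear error, is the essential technical step. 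The choice of the packet scale $t^{1/2}$ is critical here: narrower packets break the nonlinear error while wider ones break the linear error, and the wave packet scale is the unique one at which both errors can be closed under the minimal regularity provided by the $X$ norm.
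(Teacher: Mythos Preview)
Your outline matches the paper's overall strategy (bootstrap, energy estimates, wave packet testing for the sharp pointwise bound), but there is a genuine gap in the energy estimate for $Lu$. You assert that a direct computation, distributing $x$ across the trilinear convolution, yields
\[
\|LQ(u,\bar u,u)\|_{L^2} \lesssim \|u\|_{L^\infty}^2\bigl(\|Lu\|_{L^2}+\|u\|_{L^2}\bigr).
\]
This is exactly the bound the paper singles out in \eqref{Lu-not} as one that does \emph{not} hold directly. The $x$ part of $L=x-tA'(D)$ does distribute harmlessly, but the $tA'(D)$ part does not: at the symbol level one is left with the factor
\[
t\bigl(a'(\xi_1)-a'(\xi_2)+a'(\xi_3)-a'(\xi_1-\xi_2+\xi_3)\bigr),
\]
which vanishes only when $a'$ is linear (the pure NLS case). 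For a general dispersion relation satisfying (H1)--(H2) this term is of order $t$ and cannot be absorbed into $\|u\|_{L^\infty}^2\|u\|_X$; your argument would fail to close at this step for any non-quadratic $a$.

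The paper's remedy is a normal form correction: one replaces $Lu$ by $L^{NL}u = Lu + tC(u,\bar u,u)$, with the trilinear symbol $c$ chosen via the algebraic division Lemma~\ref{l:division} so that the obstructing term above is exactly cancelled by $tR_3(u,\bar u,u)$, leaving only terms of the form $C_j(Lu,\bar u,u)$ plus lower order. The key nontrivial point is that the quotient
\[
c(\xi_1,\xi_2,\xi_3)=\frac{a'(\xi_1)-a'(\xi_2)+a'(\xi_3)-a'(\xi)}{a(\xi_1)-a(\xi_2)+a(\xi_3)-a(\xi)}
\]
is smooth on the diagonal $\xi=\xi_1-\xi_2+\xi_3$, which is where the strict convexity (H2) is used in an essential way. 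Once this correction is in place, the rest of your sketch (wave packet testing, asymptotic ODE, closing the pointwise bootstrap) is in line with the paper's proof.
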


The rest of this section contains the proof of this result, organized as follows.
 Section~\ref{s:boot} provides the set-up for the main bootstrap argument. The energy estimates  leading to the bound \eqref{energy} are discussed in Subsection~\ref{s:energy};
this includes the energy bound in Proposition~\ref{p:energy} and the vector field bound in Proposition~\ref{p:Lu}.  Thus we arrive at the main objective of the paper, namely the wave packet analysis, which is considered in Subsection~\ref{s:wp}.

Finally, in Subsection~\ref{s:complete} we briefly discuss
the inverse problem, which is to reconstruct a solution given its asymptotic profile.

\subsection{Overview of the proof: A bootstrap argument.}
\label{s:boot}

The starting point of the proof is to make a bootstrap assumption for the pointwise bound,
\begin{equation} \label{boot}
\|u(t)\|_{L^\infty} \lesssim C \epsilon \la t\ra^{-\frac12}.
\end{equation}
Then the proof proceeds in two steps:
\medskip 

{\bf 1. Energy estimates:} 
Here the objective is to establish the energy bound
\begin{equation} 
\| u(t)\|_{X} \lesssim  \la t \ra^{C^2 \epsilon^2} \|u(0)\|_{X}.
\label{o:energy}
\end{equation}
This uses Gronwall's inequality in the equation for $u$, and then
a cubic correction to $Lu$. We note that, by the vector field bound 
in Proposition~\ref{p:vf}, this yields
\begin{equation}\label{o:first-point}
\|u(t)\|_{L^\infty} \lesssim t^{-\frac12} \|u(t)\|_X \lesssim  \epsilon \la t\ra^{C^2 \epsilon^2}.
\end{equation}
This step is carried out in Section~\ref{s:energy}.

\medskip
{\bf 2. Pointwise bounds:}
Here the goal is to improve the bootstrap assumption, and show that
\begin{equation}\label{o:point}
\| u(t)\|_{L^\infty} \lesssim \epsilon \la t \ra^{-\frac12} .
\end{equation}
 This step is carried out in Section~\ref{s:wp}, and  uses the method of {\em testing with wave-packets} to produce an asymptotic  profile $\gamma(t,v)$, which may be compared to the solution $u$ using the  bounds \eqref{o:energy}, respectively \eqref{o:first-point}.
Then it remains to prove an $\epsilon$ bound for $\gamma$, which is achieved by showing that $\gamma$ is a good approximate solution for  the asymptotic equation \eqref{asymptotic-t}.

\subsection{ Energy estimates}
\label{s:energy}
Our objective here is to prove energy estimates for $u$ and $Lu$, i.e. the bound \eqref{o:energy}.
In the case of $u$, we have the following straightforward result:

\begin{proposition}\label{p:energy}
Assume that $u \in L^2$ solves \eqref{main}.
Then 
\begin{equation}
\label{ee}
\frac{d}{dt} \|u\|_{L^2}^2 \lesssim \| u\|_{L^\infty}^2 \|u\|_{L^2}^2  . \end{equation}
\end{proposition}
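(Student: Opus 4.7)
The plan is to differentiate the $L^2$ norm in time, use the equation \eqref{main} to replace $\partial_t u$, and handle the resulting linear and cubic contributions separately. Concretely,
\[
\frac{d}{dt}\|u\|_{L^2}^2 = 2\,\mathrm{Re}\,\langle \partial_t u, u\rangle_{L^2} = 2\,\mathrm{Im}\,\langle A(D)u, u\rangle + 2\,\mathrm{Im}\,\langle Q(u,\bar u,u), u\rangle.
\]
By hypothesis (H1) the symbol $a(\xi)$ is real, so $A(D)$ is self-adjoint on $L^2$; hence $\langle A(D)u,u\rangle \in \mathbb{R}$ and the first term vanishes. The entire $L^2$ variation therefore comes from the nonlinear pairing
\[
\frac{d}{dt}\|u\|_{L^2}^2 = 2\,\mathrm{Im}\,\langle Q(u,\bar u,u), u\rangle,
\]
which by Cauchy-Schwarz is controlled by $2\|Q(u,\bar u,u)\|_{L^2}\|u\|_{L^2}$.

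It then suffices to prove the trilinear bound
\[
\|Q(u,\bar u,u)\|_{L^2} \lesssim \|u\|_{L^\infty}^2\,\|u\|_{L^2},
\]
with an implicit constant depending only on $q$. This is where assumption (H5) comes in: since the symbol $q(\xi_1,\xi_2,\xi_3)$ is smooth and compactly supported, one may expand it in an absolutely convergent Fourier series (or equivalently write $q$ as the inverse Fourier transform of an integrable function on the support). This decomposes $Q(u,\bar u,u)$ into an absolutely summable superposition of pure pointwise products
\[
u(x+\alpha_1)\,\overline{u(x+\alpha_2)}\,u(x+\alpha_3),
\]
each of which is estimated in $L^2$ by $\|u\|_{L^\infty}^2\|u\|_{L^2}$ via Hölder and translation invariance. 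Summing the resulting bounds yields the desired trilinear estimate.

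I do not expect a genuine obstacle here — this is the routine baseline energy identity. The only thing worth flagging is that the assumption (H4), which asserts $q(\xi,\xi,\xi)\in\mathbb R$, plays no role at this stage: one does not need to symmetrize $q$ and extract the diagonal cancellation, because the crude Cauchy-Schwarz bound is already sufficient for the cubic-in-$L^\infty$ error \eqref{ee}. The conservative structure (H4) will instead be exploited later, in the cubic correction used to bound $Lu$ and in the wave-packet testing analysis.
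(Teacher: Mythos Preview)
Your proof is correct and follows essentially the same approach as the paper: derive the energy identity, use (H1) to kill the linear contribution, and bound the remaining quadrilinear term $\langle Q(u,\bar u,u),u\rangle$ by $\|u\|_{L^\infty}^2\|u\|_{L^2}^2$ via a separation-of-variables/Fourier expansion of the compactly supported symbol $q$. Your remark that (H4) is not used here is also accurate.
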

We note that by Gronwall's inequality, this gives
\begin{equation} 
\| u(t)\|_{L^2} \lesssim  \epsilon \la t \ra^{C^2 \epsilon^2} 
\label{o:energy-first}
\end{equation}
which is the first half of \eqref{o:energy}.

\begin{proof} Multiplying equation \eqref{main} with $\bar{u}$, integrating over $x$, (H$1$) assumption, and adding the complex conjugate counterpart we obtain
\[
\frac{d}{dt}\|u\|_{L^2}^2= \int \left[ u_t\bar{u}+\bar{u}_tu \right]\,dx =-2\Im\int  Q(u, \bar{u}, u)\bar{u}\, dx.
\]
Thus, 
\[
\left| \frac{d}{dt}\|u\|_{L^2}^2\right| = \left| \int \left[ u_t\bar{u}+\bar{u}_tu \right]\,dx\right|  \lesssim  \Im \int \left| Q(u, \bar{u}, u)\bar{u} \right|\, dx\lesssim \| u\|_{L^\infty}^2 \|u\|_{L^2}^2 ,
\]
where for the last inequality we used the (H$3$) assumption on the nonlinearity $Q$ and pulled out the $L^{\infty}-$norm of $\vert u\vert^2$. Here one could 
think of the bound for $Q$ as a trilinear product bound,
using for instance the idea of separation of variables
discussed in a more general setting in Section~\ref{s:dyadic}.

\end{proof}

The more delicate matter is the energy estimate for $Lu$,
which solves the equation
\begin{equation}
(i\partial_t - A(D)) Lu = L Q(u,\bar u,u)  .  
\end{equation}
The difficulty is that the source term on the right does not 
directly satisfy a perturbative bound, e,g, of the form
\begin{equation}\label{Lu-not}
\| L Q(u,\bar u,u)  \|_{L^2} \lesssim \|u\|_{L^\infty}^2 (\|Lu\|_{L^2}+\|u\|_{L^2}).
\end{equation}
To address this issue, we will add a nonlinear correction 
to $Lu$, precisely
\[
L^{NL} u = Lu + t C(u,\bar u,u),
\]
where $C$  is  a well chosen trilinear form which has a smooth compactly supported symbol.  Precisely, we have the following:

\begin{proposition}\label{p:Lu}
There exists a smooth, compactly supported symbol $c(\cdot, \cdot, \cdot)$ with the 
property that the following estimate holds for solutions 
$u$ to \eqref{main}:
\begin{equation}\label{dt-Lu}
\frac{d}{dt} \|L^{NL} u\|_{L^2} \lesssim \| u\|_{L^\infty}^2 \|L^{NL} u\|_{L^2} + \| u\|_{L^\infty}^2 \|u\|_{L^2}
+ t \| u\|_{L^\infty}^4 \|u\|_{L^2}.
\end{equation}
\end{proposition}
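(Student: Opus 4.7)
The plan is to derive the equation satisfied by $L^{NL}u$ and then run an $L^2$ energy estimate; the entire content lies in choosing $c$ so that a certain resonant obstruction cancels. Starting from $(i\partial_t - A(D))Lu = LQ(u, \bar u, u)$ (an immediate consequence of $[i\partial_t - A(D), L] = 0$), I would first redistribute $L = x - tA'(D)$ across the trilinear form $Q$ via integration by parts on the Fourier side, working with the constraint $\xi = \xi_1 - \xi_2 + \xi_3$. The outcome is the identity
\[
LQ(u, \bar u, u) = Q(Lu, \bar u, u) - Q(u, \overline{Lu}, u) + Q(u, \bar u, Lu) + t R(u, \bar u, u) + S(u, \bar u, u),
\]
where $R$ is the trilinear form with symbol $r(\xi_1,\xi_2,\xi_3)\,q(\xi_1,\xi_2,\xi_3)$ for $r := a'(\xi_1) - a'(\xi_2) + a'(\xi_3) - a'(\xi_1 - \xi_2 + \xi_3)$, and $S$ has symbol $i(\partial_{\xi_1}+\partial_{\xi_2}+\partial_{\xi_3})q$, picked up by the Leibniz rule in $x$. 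Under (H5) both $rq$ and $\partial q$ are smooth and compactly supported, so the three $Lu$-terms yield the desired Gronwall contribution $\|u\|_{L^\infty}^2 \|Lu\|_{L^2}$ and $S$ yields $\|u\|_{L^\infty}^2 \|u\|_{L^2}$. The obstacle is the term $tR$: under $\|u\|_{L^\infty}^2 \lesssim \epsilon^2/t$ it only bounds by $\epsilon^2 \|u\|_{L^2}$, forcing linear growth.

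To cancel $tR$ I introduce the resonance function
\[
\Phi(\xi_1, \xi_2, \xi_3) := a(\xi_1 - \xi_2 + \xi_3) - a(\xi_1) + a(\xi_2) - a(\xi_3).
\]
Direct substitution shows that both $\Phi$ and $r$ vanish on $\{\xi_2 = \xi_1\} \cup \{\xi_2 = \xi_3\}$, so both are divisible by $(\xi_2-\xi_1)(\xi_2-\xi_3)$. A Taylor expansion about the full diagonal gives the leading behavior $\Phi \sim a''(\xi)(\xi_2 - \xi_1)(\xi_2 - \xi_3)$ and $r \sim -a'''(\xi)(\xi_2 - \xi_1)(\xi_2 - \xi_3)$, so by (H2) the quotient
\[
c(\xi_1,\xi_2,\xi_3) := \frac{r(\xi_1,\xi_2,\xi_3)\,q(\xi_1,\xi_2,\xi_3)}{\Phi(\xi_1,\xi_2,\xi_3)}
\]
extends to a smooth, compactly supported function (shrinking the support of $q$ if necessary to exclude any accidental zeros of $\Phi$). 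Let $C(u,\bar u, u)$ be the trilinear form with symbol $c$.

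Finally I would compute $(i\partial_t - A(D))L^{NL}u$ to verify the cancellation. Differentiating the $t$ prefactor produces $iC$; distributing $i\partial_t$ on $C(u,\bar u, u)$ and substituting $i\partial_t u = A(D)u + Q(u, \bar u, u)$ yields
\[
(i\partial_t - A(D))C(u, \bar u, u) = -(\text{symbol }\Phi c)(u, \bar u, u) + \bigl[C(Q, \bar u, u) - C(u, \overline{Q}, u) + C(u, \bar u, Q)\bigr],
\]
where the first piece equals $-R(u,\bar u, u)$ by the choice of $c$ and the bracketed quintic contributions are controlled by $\|u\|_{L^\infty}^4 \|u\|_{L^2}$. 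After rewriting the Gronwall terms using $Lu = L^{NL}u - tC$ (which introduces further quintic contributions of the same size), the bad $tR$ disappears and the equation reads
\[
(i\partial_t - A(D)) L^{NL}u = Q(L^{NL}u, \bar u, u) - Q(u, \overline{L^{NL}u}, u) + Q(u, \bar u, L^{NL}u) + iC + S + t\cdot(\text{quintic in }u).
\]
A standard $L^2$ pairing with $\overline{L^{NL}u}$ and taking imaginary parts then gives exactly the three contributions on the right-hand side of \eqref{dt-Lu}. The main obstacle is the algebraic construction of $c$: it rests on the matched vanishing structure of $r$ and $\Phi$ on the degenerate diagonals, (H2) for non-vanishing of the reduced factor, and the implicit assumption that the support of $q$ avoids any further zeros of $\Phi$.
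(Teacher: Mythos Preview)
Your approach is essentially the same as the paper's: distribute $L$ across $Q$, isolate the resonant remainder with symbol $r\cdot q$, and kill it with a correction whose symbol is $c = rq/\Phi$, yielding exactly the paper's Leibniz-type identity and the same quintic error. There is one genuine loose end, though. Your smoothness argument for $c$ only checks nonvanishing of the reduced factor of $\Phi$ at the \emph{full} diagonal $\xi_1=\xi_2=\xi_3$ via Taylor expansion; away from that triple point you fall back on ``shrinking the support of $q$ if necessary to exclude any accidental zeros of $\Phi$.'' That escape hatch is not available: $q$ is given data under (H5), and the statement asserts the existence of $c$ for that fixed $q$. What actually needs to be shown is that $\Phi$ has \emph{no} zeros off $\{\xi_1=\xi_2\}\cup\{\xi_2=\xi_3\}$, and this follows from (H2): after factoring $\Phi = (\xi_2-\xi_1)(\xi_2-\xi_3)\,b$, one computes $b$ on a simple diagonal, say $\xi_1=\xi_2$, to be the difference quotient $(a'(\xi)-a'(\xi_3))/(\xi-\xi_3)$, which is nonzero by strict convexity. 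With that observation your argument is complete and matches the paper's. (A minor sign: the derivative term you call $S$ should have symbol $i(\partial_{\xi_1}-\partial_{\xi_2}+\partial_{\xi_3})q$ rather than all plus signs, but this is irrelevant to the estimate.)
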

In essence, the correction $C$ will be chosen so that a modified version of \eqref{Lu-not} holds; precisely,
that is the bound \eqref{Lu-not+} in the proof below.

 Given our bootstrap assumption \eqref{boot} and the $L^2$ estimate \eqref{o:energy-first} for $u$, Gronwall's inequality allows us to close the energy estimate for $L^{NL} u$  
 and obtain
 \begin{equation} 
\| L^{NL} u(t)\|_{L^2} \lesssim  \epsilon \la t \ra^{C^2 \epsilon^2}.
\label{o:energy-second}
\end{equation}
 Here we can use $L^{NL}$ and $L$ interchangeably since 
 by \eqref{boot} and the $L^2$ estimate \eqref{o:energy-first}
 we have a good bound for the difference,
 \begin{equation}\label{C-L2}
 \| t C(u,\bar u, u)\|_{L^2} \lesssim 
 t \|u\|_{L^\infty}^2 \|u\|_{L^2} \lesssim
 \la t \ra^{C^2 \epsilon^2}.
 \end{equation}
 Hence, the second part of \eqref{o:energy} also follows.

\begin{proof}
We write the equation for $L^{NL} u$ in the form
\[
(i\partial_t - A(D)) L^{NL}u = L Q(u,\bar u,u) + i C(u,\bar u,u) 
- t  R_3(u,\bar u,u) - t R_5(u,\bar u, u, \bar u,u) := f,
\]
where $R_3$ and $R_5$ are translation invariant multilinear forms with smooth compactly supported symbols, $r_3$, and $r_5$ respectively. Furthermore, the symbol of $R_3$ 
is given by
\[
r_3(\xi_1,\xi_2,\xi_3) : = c(\xi_1,\xi_2,\xi_3)(a(\xi_1)-a(\xi_2) + a(\xi_3)-a(\xi_1-\xi_2+\xi_3)).
\]

To prove \eqref{dt-Lu} it suffices to have the following bound for the above source term $f$:
\begin{equation}\label{f-L2}
\|f\|_{L^2} \lesssim      \| u\|_{L^\infty}^2 \|L u\|_{L^2} + \| u\|_{L^\infty}^2 \|u\|_{L^2}
+ t \| u\|_{L^\infty}^4 \|u\|_{L^2}.
\end{equation}
Here we used \eqref{C-L2} to replace $L^{NL} u$ by $Lu$
in the right. 

The terms $i C(u,\bar u,u)$ respectively  
$t R_5(u,\bar u, u, \bar u,u)$ can be directly estimated by the second, respectively the third term on the right 
in \eqref{f-L2}, without using any structural properties 
of the corresponding symbols. Hence it remains to consider the expression
\[
L Q(u,\bar u,u) - t  R_3(u,\bar u,u).
\]
Our objective will be to choose the trilinear form $C$ with the property  that the bound \eqref{Lu-not} holds for 
the above expression. The choice of the symbol $c$ is given by the following 
algebraic division Lemma:

\begin{lemma}\label{l:division}
There exist smooth, compactly supported symbols $c$, $c_1$, $c_2$ and $c_3$ so that the following algebraic relation holds:
\begin{equation}\label{division}
\ell(x,\xi) q(\xi_1,\xi_2,\xi_3) - tc(\xi_1,\xi_2,\xi_3) (a(\xi_1)-a(\xi_2)+a(\xi_3)-a(\xi)) =  \sum_{j=1}^3 c_j(\xi_1,\xi_2,\xi_3) \ell(x,\xi_j)
\end{equation}
whenever $\xi = \xi_1-\xi_2+\xi_3$.
\end{lemma}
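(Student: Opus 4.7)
The plan is to reduce the identity to two scalar algebraic conditions, construct $c_1,c_2,c_3$ via a Hadamard decomposition of $q$, and then obtain $c$ by division, finishing with a cutoff to enforce compact support. Substituting $\ell(x,\xi) = x - t a'(\xi)$ into \eqref{division} and matching coefficients of $x$ and of $-t$ on the resonant surface $\xi = \xi_1 - \xi_2 + \xi_3$, the identity is equivalent to the two constraints
\[
\text{(a)}\ \ c_1 + c_2 + c_3 = q, \qquad
\text{(b)}\ \ c\,\Omega \,=\, \sum_{j=1}^{3} c_j\bigl(a'(\xi_j) - a'(\xi)\bigr),
\]
where $\Omega := a(\xi_1) - a(\xi_2) + a(\xi_3) - a(\xi)$.

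The next step is to analyze the phase $\Omega$. Setting $\alpha = \xi_1 - \xi_2$ and $\beta = \xi_3 - \xi_2$, on the resonant surface one has $\xi - \xi_1 = \beta$, $\xi - \xi_3 = \alpha$, $\xi - \xi_2 = \alpha + \beta$. Direct substitution shows that $\Omega$ vanishes identically on $\{\alpha = 0\}$ (where $\xi_1 = \xi_2$ and $\xi = \xi_3$, so the four summands cancel in pairs) and on $\{\beta = 0\}$. Iterated Hadamard division then gives $\Omega = \alpha\beta\, B$ with $B$ smooth, and an explicit integration yields
\[
B \,=\, -\int_0^1\!\!\int_0^1 a''\bigl(\xi_2 + s\alpha + r\beta\bigr)\, dr\, ds,
\]
which is nowhere vanishing because $a''$ has a fixed sign by (H2).

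For the choice of $c_j$, apply Hadamard division to $q$ in the coordinates $(\alpha, \beta, \xi_2)$ to write
\[
q \,=\, q_0(\xi_2) + \alpha\, \tilde q_1 + \beta\, \tilde q_3, \qquad q_0(\xi_2) := q(\xi_2,\xi_2,\xi_2),
\]
with $\tilde q_1, \tilde q_3$ smooth, and set $c_1 := q_0 + \alpha\tilde q_1$, $c_2 := -q_0$, $c_3 := q_0 + \beta \tilde q_3$. Then (a) holds by construction. To solve (b) it suffices to check that its right-hand side vanishes on $\{\alpha=0\}\cup \{\beta=0\}$, for then a second Hadamard division gives $\mathrm{RHS}(b) = \alpha\beta\, G$ with $G$ smooth, and $c := G/B$ is smooth and globally defined. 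At $\alpha = 0$ one has $c_1 + c_2 = 0$, while $a'(\xi_1) - a'(\xi) = a'(\xi_2) - a'(\xi)$ and $a'(\xi_3) - a'(\xi) = 0$, so the sum collapses to $(c_1 + c_2)\bigl(a'(\xi_2) - a'(\xi)\bigr) = 0$; the case $\beta = 0$ is symmetric using $c_2 + c_3 = 0$ there.

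Finally, to enforce compact support I would choose a smooth cutoff $\chi(\xi_1,\xi_2,\xi_3)$ of compact support with $\chi \equiv 1$ on a neighborhood of $\supp q$, and replace each of $c, c_1, c_2, c_3$ by its product with $\chi$. Since $\chi q = q$, constraint (a) is preserved, and (b) is homogeneous in $(c,c_1,c_2,c_3)$ so it too survives multiplication by $\chi$. The subtle point in this outline is the asymmetric choice of $c_j$: it is exactly what is required for $\mathrm{RHS}(b)$ to vanish to first order on both hyperplanes $\{\alpha=0\}$ and $\{\beta=0\}$ simultaneously, which in turn is what makes division by $\Omega = \alpha\beta\, B$ smooth; assumption (H2) enters precisely here, through the global non-vanishing of $B$.
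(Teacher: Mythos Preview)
Your proof is correct and follows essentially the same route as the paper's: reduce the identity to two scalar constraints by matching the $x$ and $t$ parts, factor $\Omega = \alpha\beta B$ via iterated Hadamard division with $B$ nonvanishing from (H2), and then divide to obtain a smooth $c$. The only real difference is the choice of the $c_j$: the paper simply takes $c_1=c_3=q$, $c_2=-q$ (equivalently, it first normalizes to $q=1$, sets all $c_j=1$, and then multiplies back by $q$). With that choice the right-hand side of your constraint (b) becomes $q\bigl[a'(\xi_1)-a'(\xi_2)+a'(\xi_3)-a'(\xi)\bigr]$, which factors as $\alpha\beta$ times a smooth function for exactly the same reason $\Omega$ does (just replace $a$ by $a'$). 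This avoids your Hadamard decomposition of $q$ and, as a bonus, inherits compact support directly from $q$, so the final cutoff step becomes unnecessary. Your route works fine; it just does a bit more than required.
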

We first use Lemma~\ref{l:division} to complete the proof of Proposition~\ref{p:Lu}.
The relation \eqref{division} translates into 
the following operator identity:
\[
L Q(u,\bar u,u) - t  R_3(u,\bar u,u) =
C_1(Lu,\bar u,u) + C_2(u, \overline{Lu},u) + C_3(u,\bar u, Lu)
+  D(u,\bar u, u),
\]
where $D$ has symbol
\[
d(\xi_i,\xi_2,\xi_3) = i(\partial_{\xi_1} c_1 - \partial_{\xi_2} c_2 + \partial_{\xi_3} c_3).
\]
This directly implies the bound
\begin{equation}\label{Lu-not+}
\| L Q(u,\bar u,u) - t  R_3(u,\bar u,u)\|_{L^2} \lesssim 
\|u\|_{L^\infty}^2 (\|Lu\|_{L^2} + \|u\|_{L^2}),
\end{equation}
and the proof of Proposition~\ref{p:Lu} is concluded.

\end{proof}

It remains to prove Lemma~\ref{l:division}.

\begin{proof}[Proof of Lemma~\ref{l:division}]
We start with some simplifications. Without any restriction in generality we can set $q=1$, with the minor proviso that now we discard
the compact support property for $c$ and $c_j$. Secondly, we can separate the $x$ and the $t$ component of the above identity, and conclude that we need to satisfy two identities:
\[
c_1-c_2+c_3 = 1,
\]
and
\[
c_1 a_\xi(\xi_1) - c_2 a_\xi(\xi_2) + c_3 a_\xi(\xi_3) = a_\xi(\xi) + c(\xi_1,\xi_2,\xi_3)(a(\xi_1)-a(\xi_2) + a(\xi_3) - a(\xi)).
\]
Simplifying further, we set $c_j=1$ so that the first identity is trivially satisfied. We are left with 
\[
c(\xi_1,\xi_2,\xi_3) = \frac{a_\xi(\xi_1) - a_\xi(\xi_2) + a_\xi(\xi_3)
- a_\xi(\xi)}{ a(\xi_1)-a(\xi_2) + a(\xi_3) - a(\xi)},
\]
where we need to show that the quotient is smooth.

Since $a$ is strictly convex (or concave), it is easily seen that 
the denominator can only vanish on the set 
\[
 \mathcal{D} = \{ \xi_1+\xi_3 = \xi_2 +\xi\} 
\]
if and only if
\[
\{ \xi_1,\xi_3\} = \{ \xi,\xi_2 \}.
\]
We claim that the denominator admits a representation of the form
\begin{equation}\label{d4a}
a(\xi_1)-a(\xi_2) + a(\xi_3) - a(\xi) = (\xi-\xi_1)(\xi-\xi_3)
b(\xi_1,\xi_2,\xi_3)
\end{equation}
with $b$ smooth and nonzero.

We start with the standard representation
\[
a(\xi_1) - a(\xi) = (\xi-\xi_1)a_1(\xi_1,\xi)
\]
with smooth, symmetric  $a_1$, and then 
write on $\mathcal{D}$
\[
a(\xi_1)-a(\xi_2) + a(\xi_3) - a(\xi) = (\xi-\xi_1)(a_1(\xi_1,\xi) - 
a_1(\xi_3,\xi_2)).
\]
Then we repeat the process for $a_1$ to pull out a factor of 
\[
\xi - \xi_3 = \xi_1-\xi_2.
\]
This yields a representation as in \eqref{d4a}, with a smooth $b$.
It remains to verify that $b$ is nonzero, for which we compute 
$b$ on the zero set. Suppose for instance that $\xi = \xi_1$
and $\xi_2 = \xi_3$. Then $a_1(\xi,\xi_1)= a'(\xi_1)$, and 
\[
b = \frac{a'(\xi) - a'(\xi_3)}{\xi - \xi_3},
\]
which is nonzero due to the strict convexity (concavity) of $a$.
We also remark that at the double zero, when all frequencies are equal, we have 
\[
b= a''(\xi) \neq 0.
\]

Next we consider the numerator, for which we also have a representation
\begin{equation}\label{d4da}
a_\xi(\xi_1)-a_\xi(\xi_2) + a_\xi(\xi_3) - a_\xi(\xi) = (\xi-\xi_1)(\xi-\xi_3)
b_1(\xi_1,\xi_2,\xi_3).
\end{equation}
Here $b_1$ is again smooth, but not necessarily nonzero.

Finally, we divide the expressions in \eqref{d4a} and \eqref{d4da}
to obtain
\[
c(\xi_1,\xi_2,\xi_3)=\frac{b_1(\xi_1,\xi_2,\xi_3)}{b(\xi_1,\xi_2,\xi_3)},
\]
which is easily seen to be smooth as the denominator is nonzero.

This concludes the proof of the lemma.
\end{proof}

\subsection{Wave packet testing}
\label{s:wp}

Our objective here is to describe the method of testing by wave packets, and show how it applies to our problem in order to define the asymptotic profile $\gamma$ and to show that is approximately solves the asymptotic equation, with the final objective of establishing the uniform bound for $\gamma$, which in turn implies the uniform bound for the solution $u$.

We will begin with a short description of wave packes on a fixed spatial scale, both for the linear and then for the nonlinear model.
Then we discuss the wave packets on a time dependent scale, which are critical in our analysis here. Finally, we use these wave packets 
to construct the asymptotic profile $\gamma$, and prove that it has the desired properties.

\subsubsection{Linear wave packets on a fixed scale}
The idea here is to look for solutions to the linear equation \eqref{linear} which are localized near a trajectory for the Hamilton flow,
\[
(x_0,\xi_0) \to (x_0+t a_\xi(\xi_0), \xi_0).
\]
Most desirably, this localization should occur both in position and in frequency, on the sharp, uncertainty principle scale.
The localization scales are denoted as follows:
\[
(\delta x, \delta \xi), \qquad  \delta x \cdot  \delta \xi \approx 1 \ \ \text{(uncertainty principle)}
\]
The first step is to choose these scales so that
this localization is coherent up to a given time $T$.
Heuristically, the varying group velocities within the $\delta \xi$
range leads to position variations for the Hamilton flow
up to the time $T$, which are given by
\[
\delta x = T a_{\xi\xi}(\xi_0) \delta \xi.
\]
Matching this with the uncertainty principle relation, we obtain
the localization scales adapted to the time scale $T$, namely 
\[
\delta x = T^\frac12 a_{\xi\xi}(\xi_0)^{\frac12}, \qquad
\ \ \delta \xi = T^{-\frac12} a_{\xi\xi}^{-\frac12}.
\]

So far we have only looked at the coherence at the level of the Hamilton flow. Next, we ask whether one can realize this localization at the level of actual solutions. This leads to the so called
wave packet solutions, which are approximately of the form
\[
u(x,t) \approx \gamma \  \chi( (\delta x)^{-1}(x - x_0 - t a_\xi(\xi_0))) e^{i (x \xi_0 - t a(\xi_0))} .
\]
Here one can adopt two equally useful view points. On one hand, keeping the Schwartz function $\chi$ independent of $t$, one obtains an approximate solution to \eqref{linear}, with errors which are small\footnote{Say in $L_t^1 L_x^2$.} up to time $T$. On the other hand, one can start with a given Schwartz function $\chi$ at $t=0$, and show that the representation above persists exactly with a time dependent Schwartz
function $\chi$ which satisfies uniform bounds up to time $T$.
This can be achieved via Fourier analysis, but also via
energy estimates, using the operator $L$ defined above, as well as its powers. This philosophy applies as well in variable coefficient case, see e.g. \cite{KT}.

One can think of general $L^2$ solutions to the linear flow \eqref{linear} as linear, square summable superpositions of wave packets, which can be taken either relative to a discrete
set of centers $(x,\xi)$ (wave packet parametrices) or with respect 
to a continuous set of centers, akin to phase space transform methods\footnote{a.k.a. the Bargman or the FBI transform, see e.g. \cite{Pisa}.}

Finally, we remind the reader that, under the name of Knapp counterexamples, wave packets have been used to show that  
Strichartz and the dispersive estimates are sharp.

\subsubsection{Nonlinear wave packet solutions on a fixed scale}
Here we switch to the nonlinear flow \eqref{main}, and consider 
wave packet solutions, which are localized on scales similar to the ones above. The new factor here is the amplitude of the nonlinearity,
which we denote by $\mathcal{M}$. Then the linear ansatz for wave packets
is modified to
\[
u(x,t) \approx \gamma(t, (\delta x)^{-1}(x - x_0 - t a_\xi(\xi_0))) e^{i (x \xi_0 - t a(\xi_0))} ,
\]
where the modulation factor $\gamma$ is taken to have size $\mathcal{M}$.

For functions $u$ with wave packet localization near frequency $\xi_0$, it turns out that 
the nonlinearity is well approximated by 
\[
Q(u,\bar u, u) = q(\xi_0,\xi_0,\xi_0)  |u|^2 u + O(\delta \xi |\mathcal{M}|^3) ,
\]
where the error can be thought off as perturbative provided that the 
amplitude is small enough,
\[
\delta \xi \mathcal{M}^2 \ll 1.
\]
Assuming this is the case, the amplitude function $\gamma$
should approximatively solve the asymptotic ode
\[
i \dot \gamma = q(\xi_0,\xi_0,\xi_0) \gamma |\gamma|^2.
\]
This in turn is conservative if $q(\xi_0,\xi_0,\xi_0) $ is real.

One should relate these heuristics with the idea of NLS approximation,
which roughly asserts that solutions with this type of localization 
and amplitude are well approximated by solutions to a suitable NLS problem, obtained by replacing the symbol $a$ with its quadratic approximation at $\xi_0$, and the cubic form $Q$ by
$q(\xi_0,\xi_0,\xi_0)  |u|^2 u$.  For more information on this  
we refer the reader to \cite{NLS-approx}, \cite{MR3409892} and further references therein.

\subsubsection{Linear wave packets with time dependent scale}
Working with packets with fixed scales is useful for the study 
of the local problem, but not so much for the global in time evolution.
Because of this, we will now consider global in time approximate wave packet solutions for the linear problem \eqref{linear}. To understand their structure, we recall that the spatial scales associated to time scale $t$ at velocity $v$ and associated frequency $a'(\xi_v) = v$
are given by
\[
\delta x = t^\frac12 [a_{\xi\xi}(\xi_v)]^{\frac12}, 
\ \ \delta \xi = t^{-\frac12} [a_{\xi\xi}(\xi_v)]^{-\frac12}.
\]
We now replicate the previous wave packet ansatz, 
but do it globally in time, with a time dependent scale.
Thus we define the linear wave packet $\uu_v$ associated with velocity $v$ by
\begin{equation}\label{wp-def}
{\bf u}_v = a''(\xi_v)^{-\frac12}
 \chi(y) e^{it \phi(x/t)}, \qquad y = \frac{x-vt}{t^\frac12 a''(\xi_v)^{\frac12}}.
\end{equation}
where $\chi$ is a compactly supported smooth function, which we normalize 
so that 
\[
\int \chi(y) dy = 1.
\]
This is a good approximate solution for the linear flow on dyadic time scales:
\begin{equation}\label{uu-approx}
(i \partial_t - A(D)) {\uu}_v \approx  O( t^{-1})\uu.
\end{equation}

\begin{figure}
\begin{tikzpicture} \begin{axis}
[yticklabels={,,}, xticklabels={,,},  axis y line=center, axis x line=middle, xlabel=$x$,ylabel=$t$ ] 
\addplot[smooth,magenta,mark=none, domain=0:2] {x}; 
\addplot[smooth,blue,mark=none, domain=0:1.5] {1.5*x-x^2/8}; 
\addplot[smooth,blue,mark=none, domain=0:2.5] {.5*x+x^2/8}; 
\addplot[smooth,black,mark=none, domain=.5:0] {0}; 
\addplot[smooth,black,mark=, domain=1.18:2.1] {1.6};
\node at (axis cs:1.3,0.2) {$\delta x = \sqrt{t}, \ \ \delta \xi = \dfrac{1}{\sqrt{t}}$ };
\node[rotate=45] at (axis cs:1.1,1.2) {$x=vt$};
\node at (axis cs:1.5,1.7) {$ \sqrt{t}$};
\end{axis} 
\end{tikzpicture}
\caption{The support of a wave packet with velocity $v$.}
\end{figure}
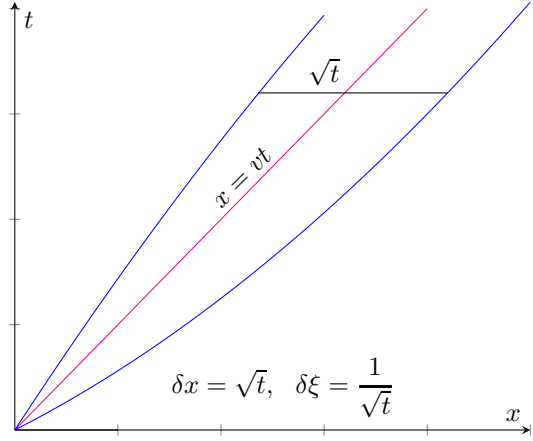
However, we carefully remark that this is not a good approximate solution globally in time. Indeed, any global solution should disperse,
rather than stay concentrated near a ray. As a corollary of this remark, we note that the above relation will still remain satisfied if we replace $\uu_v$, say, by $t^\mu \uu_v$. The choice we made above 
is for convenience only, and not at all intrinsic. If one wanted for instance to have solutions which stay bounded in $L^2$, then choosing 
$\mu = -\frac14$ would be the appropriate choice.

However, there is one advantage for our normalization,
which is seen when one attempts to gain a better understanding 
of the error term in the linear equation for $\uu_v$. 
Precisely we have the following:

\begin{lemma}\label{l:Puu}
The wave packet $\uu_v$ defined above solves a linear 
equation of the form
\begin{equation}\label{wp-eqn}
 (i \partial_t - A(D)) {\bf u}_v =  t^{-\frac32} L \uu^I_v +  t^{-\frac32} \rr_v,
\end{equation}
where $\uu^I_v$ and $\rr_v$ have a wave packet form similar to $\uu_v$.

\end{lemma}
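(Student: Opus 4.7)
The plan is to substitute the explicit ansatz
\[
\uu_v = \lambda^{-1}\chi(y) e^{i\psi}, \qquad \lambda := a''(\xi_v)^{\frac12},\ \ y = \frac{x-vt}{\sqrt{t}\,\lambda},\ \ \psi = t\phi(x/t),
\]
directly into $i\partial_t - A(D)$ and sort the output by powers of $t^{-1/2}$. Three structural facts drive the computation: the eikonal identity $\psi_t + a(\psi_x) = 0$, which follows from $\phi$ being the Legendre transform of $a$ (so $\phi'(v) = \xi_v$ and $\phi(v) = v\xi_v - a(\xi_v)$); the pointwise identity $a'(\psi_x) = x/t$, because $a'$ and $\phi'$ are inverse diffeomorphisms; and the scaling relation $a''(\xi_v)\phi''(v) = 1$, pairing the spatial scale $\sqrt{t}\lambda$ with its dual frequency scale.

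First I apply the standard WKB expansion of $A(D)$ on a slowly varying envelope $b$ times $e^{i\psi}$,
\[
A(D)(b e^{i\psi}) = e^{i\psi}\Bigl[a(\psi_x) b - i a'(\psi_x) b_x - \tfrac12 a''(\psi_x) b_{xx} - \tfrac{i}{2} a''(\psi_x)\psi_{xx} b\Bigr] + O_{L^\infty}(t^{-3/2}\lambda^{-1}),
\]
the error absorbing the further pseudodifferential commutators and higher Taylor terms of $a$ at $\psi_x$. Combined with $i\partial_t(b e^{i\psi}) = e^{i\psi}(i b_t - b\psi_t)$, the $a(\psi_x) b$ and $-b \psi_t$ contributions cancel by the eikonal identity, and using $a'(\psi_x) = x/t$ the surviving first-order piece is the material derivative along the Hamilton rays, namely $i e^{i\psi}[b_t + (x/t) b_x]$. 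A direct computation with $b = \lambda^{-1}\chi(y)$ (using $y_t + (x/t) y_x = y/(2t)$) collapses this bracket to $i y\chi'(y) e^{i\psi}/(2t\lambda)$.

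To rewrite the leading part of this as $t^{-3/2} L\uu_v^I$, I run the companion WKB expansion for $L = x - tA'(D)$, which reads $L(b e^{i\psi}) = e^{i\psi}[(x - ta'(\psi_x)) b + ita''(\psi_x) b_x + \lot]$. The first term vanishes pointwise by $a'(\psi_x) = x/t$, so on wave packets $L$ acts essentially as $ita''(\psi_x)$ times the spatial derivative of the envelope, producing a prefactor of $\sqrt{t}$. Writing $y\chi'(y) = (y\chi(y))' - \chi(y)$, I choose $\uu_v^I$ to have envelope proportional to $y\chi(y)/\lambda^2$ so that $t^{-3/2} L\uu_v^I$ reproduces the $(y\chi(y))' e^{i\psi}/(2t\lambda)$ half, and I collect everything else into $t^{-3/2}\rr_v$: the $-\chi(y)/(2t\lambda)$ leftover (with $\rr_v$ carrying an additional $\sqrt{t}$ factor relative to $\uu_v$), the dispersive corrections $\tfrac12 a''(\psi_x) b_{xx}$ and $\tfrac{i}{2}a''(\psi_x)\psi_{xx} b$ of size $O(t^{-3/2}\lambda^{-1})$, and the smooth variation of $a''(\psi_x)/\lambda^2$ away from $1$ as $y$ ranges over the support of $\chi$.

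I expect the main obstacle to be the bookkeeping: one must verify that the envelope selected for $\uu_v^I$ (involving the not-necessarily-Schwartz function $y\chi(y)$) and the composite envelope of $\rr_v$ (built from $\chi$, $\chi'$, $\chi''$ and slowly varying amplitude factors) both inherit the same spatial and frequency localization and regularity as the original $\uu_v$, so that the two pieces on the right of \eqref{wp-eqn} are genuinely of wave packet form. The key ingredient here is that $a''(\psi_x)$ depends on $x/t$ smoothly and is close to $\lambda^2$ on the support of $\chi(y)$, so its variation can be absorbed into the envelope of $\rr_v$ at the relevant order; once this is done the remaining steps are standard pseudodifferential bicharacteristic WKB bookkeeping.
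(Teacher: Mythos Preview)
Your computation has a genuine gap: the two ``dispersive corrections'' $\tfrac12 a''(\psi_x) b_{xx}$ and $\tfrac{i}{2} a''(\psi_x)\psi_{xx} b$ are not $O(t^{-3/2}\lambda^{-1})$ but $O(t^{-1}\lambda^{-1})$, exactly the same size as your transport term. Indeed $b_{xx} = \chi''(y)/(t\lambda^3)$ and $a''(\psi_x)\approx\lambda^2$ give $\chi''/(2t\lambda)$, while $\psi_{xx} = \phi''(x/t)/t \approx 1/(t\lambda^2)$ gives $i\chi/(2t\lambda)$. So after the eikonal cancellation the full $O(t^{-1})$ output is
\[
\frac{e^{i\psi}}{2t\lambda}\bigl[\,iy\chi'(y) + \chi''(y) + i\chi(y)\,\bigr] = \frac{e^{i\psi}}{2t\lambda}\,\bigl(\chi'(y) + iy\chi(y)\bigr)'.
\]
Because you discard the last two terms too early, you are forced to park the leftover $-i\chi/(2t\lambda)$ in $\rr_v$ with an extra $\sqrt{t}$ factor. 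That is fatal: the entire point of the lemma is to improve \eqref{uu-approx} from $O(t^{-1})$ to a structured $t^{-3/2}$ remainder, and an $\rr_v$ of size $\sqrt{t}\,\uu_v$ gives back only $t^{-1}$, which in the downstream estimate for $\langle u, (i\partial_t - A(D))\uu_v\rangle$ yields no integrable decay for the asymptotic equation error.

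The fix is to keep all three $O(t^{-1})$ pieces together and recognize their sum as a total $\partial_y$-derivative of $\chi' + iy\chi$. Then choosing $\uu_v^I$ with envelope proportional to $(\chi'(y) + iy\chi(y))/a''(\xi_v)$ (not just $y\chi(y)$) makes $t^{-3/2} L\uu_v^I$ absorb the entire $O(t^{-1})$ contribution via the $ita''(\psi_x)\partial_x$ part of $L$, leaving $\rr_v$ genuinely of wave packet size. This is precisely the envelope the paper arrives at; your WKB framework is sound and reaches the same answer once the second-order terms are correctly retained.
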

Compared to \eqref{uu-approx}, the lemma provides a more accurate description of the $t^{-1}$ term. The function $\uu^I_v$ here is quite explicit,
\[
\uu^I_v = - \frac{i}{2a''} (\chi'(y) - i y \chi(y)) e^{it \phi(x/t)} .
\]
This is not important later on, what matters is that the operator $L$
is applied to it. The function $\rr_v$ is less explicit but this is also not important as $\rr_v$ only plays a perturbative role later on.

\begin{proof}
While not absolutely necessary, here it is helpful to simplify the 
problem using some simple linear transformations:
\begin{itemize}
    \item Using a Galilean transformation $x-vt \to x$, the problem
    reduces to the case $v=0$. Note that this changes $a$ by a linear 
    term.
    \item Using a spatial phase shift, $\uu_v$ to $\uu_v e^{-ix \xi_v}$,
we can also ensure that $\xi_v = 0$. This translates $a$ by $\xi_v$.
\item Using a temporal phase shift, the problem reduces also to the case $a(0)=0$.
\item If $a''(0) < 0$, we can shift to $a''(0) > 0$ by replacing 
$u$ with $\bar u$ (and thus $a(\xi)$ by $a(-\xi)$).
\end{itemize}
After these simplifications, we are now in the case when
\[
v = 0, \quad \xi_v = 0,\quad  a(0) = 0, \quad a'(0)=0. 
\]
This in turn implies that 
\[
\phi(0) = 0, \quad \phi'(0) = 0, \quad \phi''(0) = \frac1{a''(0)}.
\]

Now we finally compute the equation for $\uu_0$.
For this, we use the Taylor expansion of $a$ at $0$,
\[
a(\xi) = \frac12 a''(0) \xi^2 + O(\xi^3)  .
\]
The contribution of the $\xi^3$ sized error has size $t^{-\frac32}$,
and may be included into $\rr_0$. Similarly, we have 
\[
\ell(x,\xi) = x- ta'(\xi) = x- t \xi a''(0) + O(t\xi^2),
\]
where the contribution of the $t \xi^2$ tail also 
can be included into $\rr_0$.

Since the $v$ in the lemma was set to $0$, in what follows we use
the notation $v= x/t$. Hence we have
 \[
 \begin{aligned}
 (i \partial_t - A(D)) {\bf u}_0 = & \ (i \partial_t + \frac12 a''(0) \partial_x^2 ) {\bf u}_0 + t^{-\frac32} \rr_0
\\
= & \ - \frac{i}{2} t^{-\frac32} x  (a''(0))^{-1}\chi'(y) e^{it \phi(v)}
- (a''(0)^{-\frac12}( \phi(v) - v \phi'(v)) \chi(y) e^{it\phi(v)}
\\ & + \frac12 (a''(0))^{-\frac12}\left( \frac{ \chi''(y)}{t}
+ 2 i  (a'')^\frac12 \frac{ \chi'(y) \phi'(v)}{t^\frac12 } - a''(0)\chi(y) \phi'^2(v)
+ i a''(0) \frac{\chi(y) \phi''(v)}{t}
\right) e^{it\phi}+ t^{-\frac32} \rr_0.
\end{aligned}
\]
Noting the leading order cancellation
\[
\phi(v) - v \phi'(v) = -\frac12 a''(0)  \phi'^2(v) + O(v^3),
\]
where the last term only contributes to the error,
we obtain 
\[
 \begin{aligned}
 (i \partial_t - A(D)) {\bf u}_0 = & \ 
- \frac{i}{2} t^{-\frac32} ( x + it a''(0) \partial_x)[a''(0)^{-1}\chi'(y) e^{it \phi(v)}] 
+\frac{i}{2} a''(0)^\frac12 \partial_x(\chi(y) \phi'(v))
e^{it \phi(v)} + t^{-\frac32} \rr_0
\\
= & \ 
- \frac{i}{2} t^{-\frac32} ( x + it a''(0) \partial_x)[a''(0)^{-1}\chi'(y) e^{it \phi(v)}] 
+\frac{i}{2} a''(0)^\frac12 ( \partial_x - i \phi'(v)) [\chi(y) \phi'(v)
e^{it \phi(v)}] + t^{-\frac32} \rr_0.
  \end{aligned}
\]
Since $\phi'(v) = (a''(0))^{-1}v + O(v^2) $, we can rewrite 
the second term on the right to get
\[
\begin{aligned}
 (i \partial_t - A(D)) {\bf u}_0
 = & \ - \frac{i}{2} a''(0)^{-1} t^{-\frac32}
 ( x + it a''(0) \partial_x)
 \left[ \chi'(y) e^{it \phi(v)} 
+i y \chi(y) 
e^{it \phi(v)}\right] + t^{-\frac32} \rr_0
\\
= &\ 
- \frac{i}{2} a''(0)^{-1}t^{-\frac32}  L
 \left[ (\chi'(y)+i y\chi(y)) e^{it \phi(v)} \right] + t^{-\frac32} \rr_0
\end{aligned}
\]
as needed.

\end{proof}

We also need to consider the $v$ dependence of $\uu_v$.

\begin{lemma}\label{l:dv-uv}
The wave packet $\uu_v$ defined above solves a linear 
equation of the form
\begin{equation}\label{dv-wp}
 \partial_v {\bf u}_v =    L \uu^{II}_v +   \rr_v,
\end{equation}
where $\uu^{II}_v$ and $\rr_v$ have a wave packet form similar to $\uu_v$.
\end{lemma}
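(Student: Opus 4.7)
The plan is to mirror the strategy used in the proof of Lemma \ref{l:Puu}, but differentiating in $v$ instead of in $t$. First, compute $\partial_v \uu_v$ directly. The $v$-dependence of $\uu_v$ enters in three places: the amplitude prefactor $a''(\xi_v)^{-1/2}$, the shift $-vt$ in the numerator of $y$, and the factor $a''(\xi_v)^{1/2}$ in the denominator of $y$; the phase $e^{it\phi(x/t)}$ is independent of $v$. Abbreviating $A = a''(\xi_v)$ and using $\xi_v' = \phi''(v) = 1/A$, the chain rule yields
\[
\partial_v \uu_v \;=\; - t^{1/2} A^{-1}\, \chi'(y)\, e^{it\phi(x/t)} \;-\; \frac{a'''(\xi_v)}{2\,A^{5/2}}\bigl(\chi(y) + y\chi'(y)\bigr) e^{it\phi(x/t)}.
\]
Only the first term carries a factor of $t^{1/2}$; the two $a'''$-terms already have wave-packet form (with $\chi$ replaced by $\chi$ and by $y\chi'$, respectively) and can be placed directly into $\rr_v$.

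To match the leading term against $L \uu^{II}_v$, I will pass through the auxiliary operator $\tL = t(\partial_x - i\phi'(x/t))$ used in the proof of Lemma \ref{l:Puu}. A direct calculation shows that for any profile of the form $\alpha(v)\,\chi_1(y)\, e^{it\phi(x/t)}$,
\[
\tL\bigl[\alpha(v)\, \chi_1(y)\, e^{it\phi(x/t)}\bigr] \;=\; \alpha(v)\, t^{1/2} A^{-1/2}\, \chi_1'(y)\, e^{it\phi(x/t)}.
\]
Taylor-expanding $a'(\xi)$ around $\xi = \phi'(x/t)$ and invoking the eikonal identity $a'(\phi'(x/t)) = x/t$, one sees that on functions frequency-localized near $\xi = \phi'(x/t)$,
\[
L \;=\; -t\, a''(\phi'(x/t))\,(D - \phi'(x/t)) + O(1) \;=\; i\, a''(\phi'(x/t))\, \tL + \text{error},
\]
with remainders that are of lower order and retain wave-packet structure. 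Choosing
\[
\uu^{II}_v \;:=\; i\, a''(\xi_v)^{-3/2}\, \chi(y)\, e^{it\phi(x/t)},
\]
and freezing $a''(\phi'(x/t))$ at $\xi = \xi_v$ on the support of $\chi(y)$, the computation yields
\[
L \uu^{II}_v \;=\; -t^{1/2} A^{-1}\, \chi'(y)\, e^{it\phi(x/t)} \;+\; \text{error},
\]
which exactly reproduces the leading part of $\partial_v \uu_v$. The error contains the Taylor remainder in the expansion of $a'$, the correction from replacing $a''(\phi'(x/t))$ by $A$ on the support of $\chi(y)$ (which carries a factor of $(x-vt)/t \sim y A^{1/2}/t^{1/2}$ and is therefore smaller by $t^{-1/2}$), and the $a'''$-contributions isolated earlier. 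All of these terms preserve the wave-packet form and can be collected into $\rr_v$.

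The main obstacle is bookkeeping rather than substance: one must verify that every error term retains the characteristic wave-packet structure, i.e.\ the spatial scale $t^{1/2} A^{1/2}$ around $x = vt$, the phase $e^{it\phi(x/t)}$, and a smooth compactly supported profile in $y$. The simplifying Galilean and phase-shift reductions used in Lemma \ref{l:Puu}---namely reducing to $v = 0$, $\xi_v = 0$, $a(0) = a'(0) = 0$---apply verbatim here, after which the verification reduces to explicit Taylor expansion of the same type carried out there.
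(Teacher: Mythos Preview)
Your proposal is correct and follows essentially the same approach as the paper. The paper's proof is extremely terse: it observes that $\partial_v \uu_v = -t\,\partial_x[\chi(y)]\,e^{it\phi(x/t)} + \rr_v$ and then simply remarks that this leading term is handled exactly as the analogous $\partial_x$-term was in the proof of Lemma~\ref{l:Puu}; you have unpacked that cross-reference explicitly by passing through $\tL$ and Taylor-expanding $a'$ around $\phi'(x/t)$, which is precisely the computation the paper is invoking.
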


Here we have 
\[
\uu_v^{II} = - i [a''(\xi_v)]^{-\frac32} \chi(y) e^{it \phi(x/t)}.
\]

\begin{proof}
Differentiating with respect to $v$ yields
\[
\partial_v {\bf u}_v = - t \partial_x [ \chi (y)] e^{i t \phi(x/t)}  + \rr_v ,
\]
and the first term on the right is similar to the second term on the right 
in the computation in the previous lemma.
\end{proof}

\subsubsection{Wave packet testing}
As described earlier, we will define our  asymptotic profile function $\gamma$
by
\begin{equation}\label{def-gamma}
\gamma(t,v) = \langle u, {\bf u}_v\rangle_{L^2}.
\end{equation}
Now our objective is two-fold:
\begin{itemize}
    \item To show that $\gamma$ provides a good approximation for $u$, in the sense of 
    \eqref{u-asympt}.
    \item To show that $\gamma$ is an approximate solution for the asymptotic equation
    \eqref{asymptotic-t}.
\end{itemize}

\subsection{ Bounds for $\gamma$}
\label{s:bd-gamma}
Here we establish some base-line bounds for $\gamma$, using the energy estimates 
in Proposition~\ref{p:energy}:

\begin{proposition}\label{p:gamma}
Assume that $u$ satisfies the energy bounds in  Proposition~\ref{p:energy}.
Then $\gamma$ satisfies
\begin{equation}
\| \gamma \|_{L_v^2} + \| \partial_v \gamma\|_{L^2_v} \lesssim \epsilon t^{C^2 \epsilon}.
\end{equation}
\end{proposition}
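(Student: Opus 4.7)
My plan is to establish two $L^2$ bounds separately,
\[
\|\gamma\|_{L^2_v} \lesssim \|u\|_{L^2}, \qquad \|\partial_v \gamma\|_{L^2_v} \lesssim \|Lu\|_{L^2} + \|u\|_{L^2},
\]
and then combine them with Proposition~\ref{p:energy} to produce the claimed time growth factor (which is in fact weaker than what the energy bound directly delivers, since $t^{C^2\epsilon^2} \leq t^{C^2\epsilon}$ for $\epsilon\leq 1$ and $t\geq 1$). The first bound asserts that the wave-packet testing map $T: u \mapsto \gamma$ is an $L^2$-bounded transform, and the second bound will be reduced to the first by shifting the $v$-derivative off the packet and onto $u$.

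For the first bound I would view $T$ as a continuous Gabor/FBI type transform, with the family $\{\uu_v\}_{v\in\R}$ serving as an approximately tight frame. Concretely, the plan is to compute the Schwartz kernel of $T^*T$,
\[
K(x,y) = \int \uu_v(x)\,\overline{\uu_v(y)}\, dv,
\]
in which the phases $e^{it\phi(x/t)}$ and $e^{-it\phi(y/t)}$ are $v$-independent and factor out. The remaining integral is a convolution of two shifted bumps at the spatial scale $t^{1/2} a''(\xi_v)^{1/2}$, yielding a kernel of amplitude $t^{-1/2} (a'')^{-1/2}$ supported (modulo rapid decay) in the strip $|x-y| \lesssim t^{1/2}(a'')^{1/2}$. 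Schur's test then gives $\int |K(x,y)|\, dx \lesssim 1$ and the same in $y$, hence $\|T\|_{L^2 \to L^2} \lesssim 1$. An equivalent derivation goes through Plancherel: one checks that $\hat{\uu}_v(\xi)$ is, up to a unimodular phase, a Schwartz function concentrated at $\xi = \xi_v$ on scale $t^{-1/2}$ with amplitude $t^{1/2}$; after the change of variables $v \mapsto \xi_v$ with Jacobian $\phi''(v) = 1/a''(\xi_v)$, one recognises $\gamma$ as a smoothed sampling of $\hat u$ at scale $t^{-1/2}$, which is $L^2$-bounded by a second use of Plancherel.

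For the second bound, I would apply Lemma~\ref{l:dv-uv} to write $\partial_v \uu_v = L\uu_v^{II} + \rr_v$. Since $a'$ is real by (H1), the operator $L = x - tA'(D)$ is formally self-adjoint, which allows me to move $L$ across the inner product:
\[
\partial_v \gamma = \la u, L \uu_v^{II} + \rr_v\ra = \la Lu, \uu_v^{II}\ra + \la u, \rr_v\ra.
\]
Both $\uu_v^{II}$ and $\rr_v$ carry the wave-packet form of $\uu_v$ (differing only by a harmless power of $a''(\xi_v)$ and a change of profile), so the $L^2 \to L^2$ boundedness proved in the previous paragraph applies to the corresponding transforms and yields the desired bound.

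The main obstacle I anticipate is the precise bookkeeping in the $T^*T$ kernel estimate. The spatial scale of the packets depends on $v$ through $a''(\xi_v)$, and some care is needed to confirm that this variation does not spoil the Schur bound. In the present compactly supported setting, assumption (H5) ensures that $a''(\xi_v)$ is bounded above and below uniformly on the relevant frequency support, so the scale is effectively constant and Schur closes cleanly; the same strategy, with more delicate scale tracking, will have to be carried out again in the general setting of Section~\ref{s:general}.
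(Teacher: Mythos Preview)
Your proposal is correct, and for the $\partial_v\gamma$ bound it matches the paper's argument exactly: apply Lemma~\ref{l:dv-uv}, move $L$ across the pairing, and reduce to the $L^2$-boundedness already established for the wave-packet transform.

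For the $\|\gamma\|_{L^2_v}$ bound, however, you take a somewhat more elaborate route than the paper. Rather than passing through $T^*T$ and Schur (or Plancherel on the frequency side), the paper bounds the kernel of $T$ itself pointwise: from the wave-packet localization one has $|\uu_v(t,tv_1)| \lesssim (1+t|v-v_1|^2)^{-1}$, which after the substitution $x = tv_1$ gives
\[
|\gamma(v)| \lesssim t\,|u(t\cdot)| \ast_v \frac{1}{1+tv^2},
\]
and a single application of Young's inequality (the kernel has $L^1_v$ norm $\sim t^{-1/2}$, balancing the $t^{1/2}$ from the change of variables) finishes the job. This is morally the same as your Schur estimate but applied directly to $T$ rather than to $T^*T$, and it has the advantage that the same convolution bound immediately recycles for the $\partial_v\gamma$ term (with $Lu$ in place of $u$) and later for the error estimates in Proposition~\ref{p:u-gamma}. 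Your $T^*T$ argument is perfectly valid and perhaps more familiar from frame theory, but the paper's direct convolution estimate is shorter and more portable within this section.
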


\begin{proof}
If we bound $\uu_v$ by 
\[
|\uu_v(t,x_1)| \lesssim  \frac{1}{1+ t |v-v_1|^2}, \qquad x_1 = t v_1, 
\]
then the $L^2$ bound for $\gamma$ can be interpreted as a convolution estimate,
as 
\[
|\gamma(v)| \lesssim t |u(vt)| \ast_v \frac{1}{1+ tv^2} ,
\]
where the convolution kernel is integrable. By Young's inequality this yields
\[
\| \gamma \|_{L^2_v}
\lesssim \sqrt{t} \|u(vt)\|_{L^2_v} 
\lesssim \|u\|_{L^2_x}
\]
as needed.

For the $L^2$ bound for $\partial_v \gamma$ we first apply Lemma~\ref{l:dv-uv}.
Then we obtain the convolution bound
\[
|\partial_v \gamma(v)| \lesssim  t (|Lu|+|u|)(vt) \ast_v \frac{1}{1+tv^2} ,
\]
and then conclude as above.
\end{proof}

\subsubsection{ Approximate profile.}

Our goal here is to estimate the  difference
\[
r(t,x) = u(t,x) - \frac{1}{\sqrt{t}}\gamma(t,v) e^{i t \phi(v)}, \qquad v = \frac{x}{t}
\]
as follows:

\begin{proposition}\label{p:u-gamma}
Assume that $u$ satisfies the energy bounds in  Proposition~\ref{p:energy}.
Then the above error $r$ satisfies the uniform bound
\begin{equation}
\| r \|_{L^\infty} \lesssim \epsilon t^{-\frac{3}{4}+C^2 \epsilon},
\end{equation}
and the $L^2$ bound
\begin{equation}
\| r \|_{L^2} \lesssim \epsilon t^{-\frac{1}{2}+C^2 \epsilon}.
\end{equation}
\end{proposition}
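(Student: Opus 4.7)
The plan is to express both $u(t,x)$ and its approximation in terms of the phase-modulated function
\[
w(t,x) := u(t,x) e^{-it\phi(x/t)},
\]
and interpret $r$ as the deviation of $w$ from its local average on the wave packet scale. After changing variables $y = (x-vt)/(t^{1/2} a''(\xi_v)^{1/2})$ in the integral $\gamma(t,v) = \langle u, \uu_v\rangle_{L^2}$, the wave packet phase factor $e^{-it\phi(x/t)}$ combines with $u$ to give
\[
\gamma(t,v) = t^{1/2} \int w\bigl(t, vt + t^{1/2} a''(\xi_v)^{1/2} y\bigr) \chi(y)\, dy.
\]
Since $\int \chi\,dy = 1$ and $e^{it\phi(v)} = e^{it\phi(x/t)}$ at $v=x/t$, the error becomes
\[
r(t,x) = e^{it\phi(x/t)} \int \Bigl[w(t,x) - w\bigl(t, x + t^{1/2} a''(\xi_v)^{1/2} y\bigr)\Bigr] \chi(y)\, dy.
\]

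The key computation is $\partial_x w = t^{-1} e^{-it\phi(x/t)} \tL u$, where $\tL = t(\partial_x - i\phi'(x/t))$ is the operator introduced in the proof of Proposition~\ref{p:vf}. Combining with Lemma~\ref{l:tL} and the energy estimates of Proposition~\ref{p:energy} and Proposition~\ref{p:Lu} (which give $\|Lu\|_{L^2} + \|u\|_{L^2} \lesssim \epsilon t^{C^2\epsilon^2}$) yields
\[
\|\partial_x w\|_{L^2} \lesssim t^{-1}\bigl(\|Lu\|_{L^2} + \|u\|_{L^2}\bigr) \lesssim \epsilon t^{-1 + C^2\epsilon^2}.
\]

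From here, the $L^\infty$ bound is immediate: for each $y$ in the compact support of $\chi$, the fundamental theorem of calculus and Cauchy--Schwarz give
\[
\bigl|w(t,x) - w(t, x + t^{1/2} a''(\xi_v)^{1/2} y)\bigr| \lesssim t^{1/4} |y|^{1/2} \|\partial_x w\|_{L^2} \lesssim \epsilon t^{-3/4 + C^2\epsilon^2},
\]
and integrating against $\chi$ produces the claimed pointwise bound. For the $L^2$ bound, by Minkowski's inequality it suffices to bound $\|w(t,\cdot) - w(t, \cdot + h(\cdot))\|_{L^2}$, where $h(x) = t^{1/2} a''(\xi_{x/t})^{1/2} y$ for each fixed $y \in \mathrm{supp}\,\chi$. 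A Fubini argument applied to
\[
\bigl|w(t,x) - w(t, x + h(x))\bigr|^2 \leq |h(x)| \int_0^{h(x)} |\partial_x w(t, x+s)|^2\, ds
\]
exchanges the order of integration to yield
\[
\|w(t,\cdot) - w(t, \cdot + h)\|_{L^2}^2 \lesssim t \|\partial_x w\|_{L^2}^2 \lesssim \epsilon^2 t^{-1 + 2C^2\epsilon^2},
\]
which gives the desired $L^2$ bound after integrating in $y$.

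The main technical subtlety will be the $x$-dependence of the shift $h(x)$ in the $L^2$ estimate; however, the compact support assumption (H5) ensures that effectively $v = x/t$ ranges over a compact set, on which $a''(\xi_v)^{1/2}$ is smooth with derivative $O(1)$, so the map $x \mapsto x + h(x)$ has Jacobian $1 + O(t^{-1/2})$ and the Fubini computation proceeds without genuine complications.
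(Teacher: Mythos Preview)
Your proof is correct and takes essentially the same approach as the paper: both reduce the error $r$ to an expression controlled by $\|\tL u\|_{L^2}$ via the identity $\partial_x(u e^{-it\phi(x/t)}) = t^{-1} e^{-it\phi(x/t)} \tL u$, and then invoke Lemma~\ref{l:tL}. The only difference is packaging: the paper writes $\sqrt{t}\,e^{-it\phi(v)} r(t,vt) = t^{-1/2}\langle \tL u, \uu^{III}_v\rangle$ by integrating by parts (introducing the antiderivative $\chi_1$ of $\chi$) and then applies Young's inequality to the resulting convolution, whereas you arrive at the same control by the fundamental theorem of calculus on $w = u e^{-it\phi}$ followed by Cauchy--Schwarz and Fubini.
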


\begin{proof}
We represent
\[
\sqrt{t} e^{-it\phi(\blue{v})} r(t,tv ) = \langle u, \ww_v \rangle ,
\]
where on the right we use the $L^2_x$ pairing, with  
\[
\ww_v = \sqrt{t} e^{it\phi} \delta_{x=vt}  - \uu_v.
\]
Using the normalization $\int \chi = 1$ we rewrite $\ww_v$ as 
\[
\ww_v = t^\frac12 \partial_x(\chi_1 (y) \sgn(y)) e^{it \phi} =  t^\frac12 (\partial_x-i\phi'(x/t)) 
[\chi_1 (y) \sgn(y) e^{it \phi}],
\]
where $\chi_1$ is 
\[
\chi_1(y) =\left\{ 
\begin{array}{lc}
 - \int_{-\infty}^y  \chi (z) dz & y < 0  \\ \\
- \int_y^\infty    \chi (z) dz & y > 0  ,
\end{array}
\right.
\]
which leads to 
\[
\sqrt{t} e^{-it\phi(v)} r(t,tv) = t^{-\frac12} \langle \tL u, \uu^{III}_v \rangle , \qquad \uu^{III}_v = \chi_1 (y) \sgn(y) e^{it \phi}.
\]
Now $\uu^{III}_v$ has the same size and localization as $\uu$,
so we can argue  as in the 
proof of Proposition~\ref{p:gamma}
that
\[
|r(t,vt)| \lesssim |\tL u (t,vt)| \ast_v
\frac{1}{1+tv^2}.
\]
Then by Young's inequality we conclude that
\[
\| r(t,x)\|_{L^2_x} = t^{\frac12} 
\| r(t,vt)\|_{L^2_v}
\lesssim  \| \tL u(t,vt)\|_{L^2_v} 
= t^{-\frac12} \| \tL u(t,x)\|_{L^2_x} 
\]
respectively 
\[
\| r\|_{L^\infty} \lesssim t^{-\frac14}  \| \tL u(t,vt)\|_{L^2_v}
= t^{-\frac34} \|\tL u\|_{L^2_x}.
\]
Now we can conclude using Lemma~\ref{l:tL}.
\end{proof}

\subsubsection{ The asymptotic equation for $\gamma$}

Here we prove the following:

\begin{proposition}\label{p:gamma-er}
Assume that $u$ satisfies the energy bounds 
\[
\|u\|_{X} \lesssim \epsilon \la t \ra^{C^2 \epsilon^2}.
\]
Then $\gamma$ solves the asymptotic equation
\begin{equation}\label{gamma-asympt}
    \dot \gamma(t,v) = i q(\xi_v,\xi_v,\xi_v) t^{-1} \gamma(t,v) |\gamma(t,v)|^2 
+ f(t,v),
\end{equation}
where $f$ satisfies the uniform bound
\begin{equation}
\label{e:f-inft}
\| f \|_{L^\infty} \lesssim \epsilon t^{-\frac{5}{4}+3C^2 \epsilon},
\end{equation}
and the $L^2$ bound
\begin{equation}
\label{e:f-l2}
\| f \|_{L^2_v} \lesssim \epsilon t^{-\frac32+3C^2 \epsilon}.
\end{equation}
\end{proposition}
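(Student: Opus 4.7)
The plan is to differentiate $\gamma(t,v) = \langle u, \uu_v\rangle$ in $t$, substituting the equation $i\partial_t u = A(D) u + Q(u,\bar u, u)$ for $u$ and the approximate wave packet equation $(i\partial_t - A(D))\uu_v = t^{-3/2}(L\uu_v^I + \rr_v)$ from Lemma~\ref{l:Puu}. Self-adjointness of $A(D)$ (real symbol) cancels the dispersive contributions, and the same property for $L = x - tA'(D)$ allows me to move $L$ onto $u$, so that
\[
\dot\gamma = -i\langle Q(u,\bar u, u), \uu_v\rangle + it^{-3/2}\langle Lu, \uu_v^I\rangle + it^{-3/2}\langle u, \rr_v\rangle.
\]
The task is then to show that the first term on the right reproduces the claimed cubic expression (up to the sign dictated by conventions) with acceptable error, while the remaining two terms are already perturbative.

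The two linear correction terms are handled uniformly. Since $\uu_v^I$ and $\rr_v$ share the wave packet structure of $\uu_v$, each has $\|\cdot\|_{L^2_x}$ of size $t^{1/4}$, so Cauchy-Schwarz together with the energy bounds $\|Lu\|_{L^2}+\|u\|_{L^2} \lesssim \epsilon t^{C^2\epsilon^2}$ yields $L^\infty_v$ control of order $\epsilon t^{-5/4 + C^2\epsilon^2}$. For the $L^2_v$ bound, the pointwise decay $|\uu_v(t,tv_1)| \lesssim (1+t(v-v_1)^2)^{-1}$ turns the pairing into a convolution in $v$ with an integrable kernel, so Young's inequality (as in the proof of Proposition~\ref{p:gamma}) gives $L^2_v$ norms of order $\epsilon t^{-3/2 + C^2\epsilon^2}$ after the prefactor; in both cases the outcome is better than required.

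For the nonlinear term I would invoke Proposition~\ref{p:u-gamma} and decompose $u = \tilde u + r$ with $\tilde u(t,x) = t^{-1/2}\gamma(t,v) e^{it\phi(v)}$, expanding $Q(u,\bar u, u) = Q(\tilde u, \bar{\tilde u}, \tilde u) + E$ where every summand of $E$ carries at least one factor of $r$. Using $\|r\|_{L^\infty}\lesssim \epsilon t^{-3/4+C^2\epsilon}$, $\|r\|_{L^2}\lesssim \epsilon t^{-1/2+C^2\epsilon}$, the bootstrap $\|u\|_{L^\infty}\lesssim \epsilon t^{-1/2}$, and $\|\uu_v\|_{L^1_x}\lesssim t^{1/2}$, each summand of $E$ contributes at most $\epsilon^3 t^{-5/4+C^2\epsilon}$ in $L^\infty_v$; the $L^2_v$ bound follows from the same convolution argument applied to the $L^2_x$ control of the trilinear pieces, giving $\epsilon^3 t^{-3/2+C^2\epsilon}$.

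The main obstacle is the analysis of the leading term: one must show
\[
\langle Q(\tilde u, \bar{\tilde u}, \tilde u), \uu_v\rangle \approx \pm t^{-1} q(\xi_v,\xi_v,\xi_v) \gamma(t,v)|\gamma(t,v)|^2
\]
with error of the claimed size. Here one exploits the dual localization of $\uu_v$, namely concentration in a spatial $\sqrt t$-neighborhood of $x = vt$ and in a frequency $t^{-1/2}$-neighborhood of $\xi_v = \phi'(v)$. This allows one to freeze the symbol $q(\xi_1,\xi_2,\xi_3)$ at the diagonal value $q(\xi_v,\xi_v,\xi_v)$ by smoothness of $q$, and simultaneously to replace $\gamma(t,v_1)$ by $\gamma(t,v)$ inside the support $v_1 = x/t$ near $v$, using $\|\partial_v\gamma\|_{L^2_v}\lesssim \epsilon t^{C^2\epsilon}$ from Proposition~\ref{p:gamma}. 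After these reductions, $Q$ acts locally as multiplication by $q(\xi_v,\xi_v,\xi_v)$ on $|\tilde u|^2 \tilde u = t^{-3/2}|\gamma|^2\gamma e^{it\phi(v)}$, and the integration against $\uu_v$ contributes the factor $\sqrt t$ through the normalization $\int \chi = 1$. The delicate part is tracking that each of the three $\gamma$-factors (and the symbol regularization) loses a factor of $\epsilon t^{C^2\epsilon}$, which combines to the cubic exponent $3C^2\epsilon$ appearing in the statement.
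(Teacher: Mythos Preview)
Your proof is correct and follows essentially the same route as the paper: split $\dot\gamma$ into the nonlinear contribution $\langle Q(u,\bar u,u),\uu_v\rangle$ and the wave-packet error handled via Lemma~\ref{l:Puu}, then in the nonlinear term replace $u$ by $t^{-1/2}\gamma\,e^{it\phi}$ using Proposition~\ref{p:u-gamma}, freeze $\gamma(t,x/t)$ at $\gamma(t,v)$ using the $\partial_v\gamma$ bound of Proposition~\ref{p:gamma}, and finally reduce $Q$ to its diagonal symbol. The only cosmetic difference is that the paper packages your ``symbol freezing'' step as the single semiclassical identity $Q(e^{it\phi},e^{-it\phi},e^{it\phi}) = q(\xi_v,\xi_v,\xi_v)e^{it\phi} + O(t^{-1})$ after the $\gamma$'s have been factored out.
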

\begin{proof}
We compute
\[
\dot\gamma(t,v) = -i \langle (i\partial_t - A(D)) u, \uu_v \rangle+
i  \langle u, (i\partial_t - A(D)) \uu_v \rangle := I_1(t,v)+I_2(t,v).
\]

For $I_2$ we use Lemma~\ref{l:Puu} to write
\[
I_2(t,v) = i t^{-\frac32} (\langle Lu, \uu^I_v \rangle +  \langle u, \rr_v \rangle).
\]
This allow us to bound its size both in $L^\infty$, using H\"older's inequality,
and in $L^2$ via convolution bounds.

The expression $I_1$, on the other hand, has the form
\[
I_1(t,v) = i \langle Q(u,\bar u, u), \uu_v \rangle.
\]
Here we first use 
the bounds for $r$ in 
Proposition~\ref{p:u-gamma} in order to substitute 
$u$ with $t^{-\frac12} \gamma(t,x/t) e^{it\phi(x/t)}$ modulo acceptable errors,
\[
I_1(t,v) = i t^{-\frac32} \gamma(t,v) |\gamma(t,v)|^2 
\langle Q( \gamma(t,x/t) e^{it \phi},
\bar \gamma(t,x/t)e^{-it\phi},\gamma(t,x/t) e^{it\phi}),\uu_v\rangle + f ,
\]
where the error $f$ is as in \eqref{e:f-inft}, \eqref{e:f-l2}.

Then we take advantage of the fact that the kernel of $Q$ is localized on the unit scale
in order to replace $\gamma(t,x/t)$ with $\gamma(t,v)$, again with acceptable errors, which are estimated using the bounds for $\partial_v \gamma$ in Proposition~\ref{p:gamma}.
Thus we get
\[
I_1(t,v) = i t^{-\frac32} \gamma(t,v) |\gamma(t,v)|^2 
\langle Q( e^{it \phi},e^{-it\phi},e^{it\phi}),\uu_v\rangle + f.
\]

Finally, a semiclassical computation shows that 
\[
Q( e^{it \phi},e^{-it\phi},e^{it\phi}) = q(\xi_v,\xi_v,\xi_v) e^{it \phi} + O(t^{-1}),
\]
so the desired asymptotic equation follows.

\end{proof}

\subsection{ Conclusion}

Here we show how to close the bootstrap argument, and prove that 
the global result follows as a consequence of the results in Propositions~\ref{p:u-gamma} 
and \ref{gamma-asympt}.

The bootstrap argument closes as follows:
\begin{itemize}
    \item  Proposition~\ref{p:Lu} gives the energy bounds on our corrected vector field $L^{NL}$, where we made use of the energy estimates on $u$ obtained in \eqref{o:energy-first} and of the bootstrap assumption expressed in \eqref{boot} to get
    \[
    \Vert L^{NL}u(t)\Vert_{L^2}\lesssim \epsilon \langle t\rangle^{C^2\epsilon^2} .
    \]
\item  We use the pointwise decay bound obtained in Proposition~\ref{p:u-gamma} for the difference between the asymptotic profile $\gamma (t,v)$ and solution $u$ to conclude that the error term $f$ in the asymptotic equation \eqref{gamma-asympt} for  $\gamma$ is acceptable,
i.e. has better than $t^{-1}$ decay, 
as stated in Proposition~\ref{p:gamma-er}. Integrating \eqref{gamma-asympt} leads to a pointwise bound for $\gamma$:
\[
\vert \gamma (t,v)\vert \leq \vert \gamma (1,v)\vert +\int _1^t \vert f(s,v)\vert \, ds.
\]
Here, we use the energy bound \eqref{o:energy-first} at time $t=1$ and the pointwise bound on $\gamma$ to conclude that
\[
\Vert \gamma (1,v)\Vert_{L^{\infty}}\lesssim \epsilon,
\]
as well as the pointwise bound on $f$ given in Proposition~\ref{p:gamma-er}, and get
\[
\vert \gamma (t,v)\vert \leq \vert \gamma (1,v)\vert +\int _1^t \vert f(s,v)\vert \, ds\lesssim \epsilon . 
\]
\item Lastly, from the above estimate and Proposition~\ref{p:u-gamma}, it follows that the  pointwise bound on $u$ is
\[
\left| u\right| \lesssim \epsilon t^{-\frac12},
\]
which, under the constraint $1\ll C$, concludes the  bootstrap argument. 
   
  \end{itemize}

\subsection{Modified scattering and asymptotic completeness}
\label{s:complete}

An immediate consequence of the approximate asymptotic equation \eqref{gamma-asympt} for $\gamma$ is that, as $t \to \infty$, the function $\gamma$ converges to a solution 
$\tgamma$ to the exact asymptotic equation,
\[
\dot \tgamma = i q(\xi_v,\xi_v,\xi_v) t^{-1} \tgamma(t,v) |\tgamma(t,v)|^2 ,
\]
which can be represented in the form
\[
\tgamma(t,v) = W(v) e^{i q(\xi_v,\xi_v,\xi_v) \ln t |W(v)|^2}. 
\]

We will refer to the function $W$ as the asymptotic profile of the solution $u$, which is now asymptotically described as
\[
u(t,x) \approx \frac{1}{\sqrt{t}}W(v) e^{i q(\xi_v,\xi_v,\xi_v) \ln t |W(v)|^2} e^{it \phi(x/t)}.
\]
Then it is natural to consider the relation between the initial data $u_0$ and the asymptotic profile $W$, via bounds for the difference
\begin{equation}\label{scat-error}
e(t,x)=    u(t,x) - \frac{1}{\sqrt{t}}W(v) e^{i q(\xi_v,\xi_v,\xi_v) \ln t |W(v)|^2} e^{it \phi(x/t)}.
\end{equation}
In order to avoid  any discussion 
of the asymptotic behavior of $a$ at infinity, here we choose some compact frequency interval $I$ so that the 
the symbol $q$ of the nonlinearity is supported in $I^3$, and assume that the initial data $u_0$ is frequency localized  in $I$. Then the associated range of velocities is $J = a'(I)$.

\begin{theorem} \label{t:scat-model}
a) For each initial data $u_0$ satisfying the smallness condition \eqref{small} and which is frequency localized in $I$, there exists an asymptotic profile $W \in H^{1-C\epsilon}$, supported in $J = a'(I)$ and
with the property that 
\begin{equation}
\| W\|_{H_v^{1-C\epsilon}}    \lesssim \epsilon,
\end{equation}
for which the above difference satisfies the $L^2$ bounds
\begin{equation}
\| e \|_{L_x^2} \lesssim \epsilon t^{-\frac12+C\epsilon},
\end{equation}
as well as the $L^\infty$ bounds
\begin{equation}
\| e \|_{L_x^\infty} \lesssim \epsilon t^{-\frac34+C\epsilon}.
\end{equation}
Furthermore, the map $u_0 \to W$ is injective.

b) For each $W$  supported in $J$ and satisfying 
\begin{equation}
\| W\|_{H^{1+C\epsilon}_v}    \lesssim \epsilon,
\end{equation}
there exists an associated initial data $u_0$ satisfying the smallness condition \eqref{small} and frequency localized in $I$ so that $W$ is the asymptotic profile of $u_0$.
\end{theorem}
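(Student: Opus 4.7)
The plan is to extract the asymptotic profile $W$ from the wave packet profile $\gamma(t,v) = \langle u,\uu_v\rangle$ by peeling off the modified phase for part (a), and to reverse this construction via a backward Cauchy scheme for part (b).

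For part (a), I would start from the approximate asymptotic ODE \eqref{gamma-asympt}. Since $q(\xi,\xi,\xi)\in\R$ by (H4), pairing with $\bar\gamma$ and taking real parts yields
\begin{equation*}
\partial_t |\gamma(t,v)|^2 = 2\,\Re\bigl(\bar\gamma(t,v)\,f(t,v)\bigr),
\end{equation*}
and combining the pointwise bound $|\gamma|\lesssim\epsilon$ with \eqref{e:f-inft} shows that the right-hand side is integrable in $t$. Hence $|\gamma(t,v)|^2\to|W(v)|^2$ uniformly as $t\to\infty$. I would then set
\begin{equation*}
W(t,v) := e^{-iq(\xi_v,\xi_v,\xi_v)\ln t\,|\gamma(t,v)|^2}\gamma(t,v),
\end{equation*}
so that the dominant $t^{-1}$ term in $\dot\gamma$ cancels by design, leaving an equation of the form $\dot W = O\bigl(\ln t\,|\partial_t|\gamma|^2|\,|W| + |f|\bigr)$ whose right-hand side is still integrable in $t$. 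Consequently $W(t,\cdot)\to W(\cdot)$ in $L^\infty\cap L^2$ at a polynomial rate.

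For the Sobolev regularity, Proposition~\ref{p:gamma} gives $\|\partial_v\gamma(t)\|_{L^2}\lesssim\epsilon t^{C\epsilon}$, and differentiating the definition of $W$ in $v$ produces at worst an extra $\log t$ factor from the modified phase, so $\|\partial_v W(t)\|_{L^2}\lesssim \epsilon t^{C\epsilon}\log t$. Combined with the uniform bound $\|W(t)\|_{L^2}\lesssim\epsilon$, real interpolation between $L^2$ and $H^1$ absorbs the slow growth into a small Sobolev loss, yielding $\|W\|_{H^{1-C\epsilon}}\lesssim\epsilon$ for the limit. Support of $W$ in $J=a'(I)$ is inherited from the frequency localization of $u$ in $I$. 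The bounds on $e$ follow by splitting
\begin{equation*}
e = \bigl(u - t^{-1/2}\gamma(t,v)e^{it\phi(v)}\bigr) + t^{-1/2}\bigl(\gamma(t,v) - W(v)e^{iq(\xi_v,\xi_v,\xi_v)\ln t|W(v)|^2}\bigr)e^{it\phi(v)},
\end{equation*}
estimating the first parenthesis by Proposition~\ref{p:u-gamma} and the second by the convergence rate of $W(t)$ to $W$. Injectivity of $u_0\mapsto W$ follows once part (b) provides backward uniqueness: two solutions sharing the same $W$ differ by a term that decays in $L^\infty$ faster than the linear rate, and a backward Gronwall argument on the linearized difference equation forces equality at $t=0$.

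For part (b), given $W\in H^{1+C\epsilon}$ supported in $J$, I would introduce the explicit asymptotic ansatz
\begin{equation*}
\tilde u(t,x) = t^{-1/2}W(v)e^{iq(\xi_v,\xi_v,\xi_v)\ln t|W(v)|^2}e^{it\phi(v)}, \qquad v=x/t,
\end{equation*}
and for each large $T$ solve \eqref{main} backwards from $t=T$ with data $\tilde u(T,\cdot)$, producing a solution $u_T$ on $[0,T]$. The energy estimates of Propositions~\ref{p:energy} and~\ref{p:Lu}, together with the vector-field dispersive estimate of Proposition~\ref{p:vf}, are all time-reversal symmetric, so the bootstrap of Section~\ref{s:boot} runs verbatim backward in time, giving $u_T$ with $X$-norm and $L^\infty$ bounds uniform in $T$. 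To show that $\{u_T(0,\cdot)\}$ is Cauchy in $L^2$ as $T\to\infty$, I would compare $u_{T_2}$ and $u_{T_1}$ for $T_2>T_1$: their difference solves a linearized equation with source controlled by the asymptotic error of $\tilde u$ on $[T_1,T_2]$, which by the analysis of part (a) is integrable in $t$, so $\|u_{T_2}(0)-u_{T_1}(0)\|_{L^2}\to 0$. The limit $u_0=\lim_{T\to\infty}u_T(0,\cdot)$ then has asymptotic profile $W$ by part (a). The main obstacle is bounding $L\tilde u(T,\cdot)$ in $L^2$ uniformly in $T$: differentiating the modified phase $e^{iq\ln T|W|^2}$ produces a $\log T$ factor, which is precisely why the slightly stronger assumption $W\in H^{1+C\epsilon}$ is required, so that the excess Sobolev regularity absorbs the $T^{C\epsilon}$ loss in the $L\tilde u(T)$ bound, mirroring the loss seen in part (a).
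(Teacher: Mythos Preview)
The paper does not actually prove this theorem: its entire proof reads ``The argument here repeats the one in \cite{NLS}, and is omitted.'' Your outline is precisely the standard argument from \cite{NLS} adapted to the present setting---extracting $W$ from $\gamma$ by removing the logarithmic phase and using interpolation to trade the $t^{C\epsilon}$ growth of $\partial_v\gamma$ for a small Sobolev loss in part~(a), and running a backward-in-time limiting scheme from the explicit asymptotic ansatz in part~(b)---so there is nothing to compare against and your approach is the expected one.

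One small point to tighten in part~(a): when you bound the second term in your splitting of $e$, note that $\gamma(t,v)=W(t,v)e^{iq\ln t|\gamma(t,v)|^2}$ carries $|\gamma|^2$ in the phase while the target carries $|W|^2$, so you must also control $|W|\cdot\ln t\cdot\bigl||\gamma|^2-|W|^2\bigr|$; this is harmless since $|\gamma|^2-|W|^2$ decays at a $t^{-1/4+C\epsilon}$ rate, but it should be said explicitly. In part~(b), you should also check that the data $\tilde u(T,\cdot)$ is, up to acceptable errors, frequency localized in $I$; this follows from a stationary-phase computation since $W$ is supported in $J=a'(I)$, but it is needed before you can invoke the bootstrap machinery of Section~\ref{s:boot}.
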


Often one refers to the first property as the scattering property (modified scattering)
and the second as the existence of wave operators 
(modified wave operators in our context). Together, they are called 
the \emph{asymptotic completeness} property.

We also remark on the slight imperfection in the above result, connected with the $\pm C\epsilon$ terms in the Sobolev indices. These are largely unavoidable due to the $\log t$
terms in the phase, though one might possibly replace small powers with logs. 

\begin{proof}
The argument here repeats the one in \cite{NLS}, and is omitted.
\end{proof}

\section{Global solutions for small localized data: the general case}
\label{s:general}

Here we consider several possible extensions of our main result, where  we drop the compact support assumption on the symbol of the nonlinearity $Q$. Then we can no longer work with frequency localized data, so instead we will have to assume a suitable 
Sobolev type regularity at infinity. Precisely, we will 
define the space $X$ by 
\begin{equation}\label{def:X}
\| u \|_{X}^2 = \| \Lambda_0(D) u \|_{L^2} + \|\Lambda_1 (D) L u\|_{L^2} 
\end{equation}
with suitable multiplier weights $\Lambda_0$ and $\Lambda_1$.
The question we ask is 

\begin{question}
Given the symbols $a,q$ and the above space $X$, under what assumptions does a small initial data in $X$ guarantee global solutions and modified scattering for the equation \eqref{main} ?
\end{question}

Here there are three high frequency properties that  play a role, namely the behaviors of $a$, of $q$ and of $\Lambda_0$, $\Lambda_1$, all of which will be assumed to be of symbol type. These need to be considered both at frequencies close to $+\infty$ and at $-\infty$, and the two regions are largely independent.
For convenience only we will not differentiate between the two.
We begin our discussion with several  remarks,
which will play a role both in terms of the model we consider (i.e. the choice of $a$ and $q$)
and the regularity level for the result
(i.e. the choice of $\Lambda_0$ and $\Lambda_1$):

\begin{description}
\item [(i) The behavior of $a$ and $a''$]
The convexity (concavity) of $a$ is associated to dispersion, and plays a critical role. To simplify the notations we will assume $a$ is convex, $a'' > 0$, and  also we will assume some polynomial behavior for $a''$ at infinity,
\begin{equation}\label{choose-a}
a''(\xi) \approx \la\xi\ra^\sigma, \qquad |\xi| \to \infty,
\qquad \sigma \in \R ,
\end{equation}
with symbol type bounds for higher derivatives,
\begin{equation}\label{choose-a-reg}
|\partial^j a''(\xi)| \lesssim \la\xi\ra^{\sigma-j}, 
\qquad j \geq 2.
\end{equation}

Here we distinguish two different scenarios:
\begin{itemize}
    \item The \emph{generalized Klein-Gordon} case, $\sigma < -1$, where $a$ has linear behavior at infinity and the linear problem has finite speed of propagation in the high frequency limit. Here we could further distinguish the range $\sigma \in [-2,-1)$ where $a$ does not have a linear asymptote.   The exact Klein-Gordon problem corresponds to $\sigma = -3$.
    
    \item The \emph{generalized NLS} case $\sigma \geq -1$, where $a$ is superlinear at infinity and we have infinite speed of propagation. The NLS equation in particular corresponds to $\sigma=0$, while mKdV type behavior is associated to $\sigma=1$. 
\end{itemize}
\medskip

\item [(ii) The NLS smallness condition] In the regime of balanced frequency interactions, our problem is well approximated by a cubic NLS problem. There solitons can occur in the focusing case, but not small solitons.
To avoid such a scenario, a smallness condition is required.
A straightforward scaling computation yields the relation 
\begin{equation}\label{no-soliton}
\Lambda_0(\xi) \Lambda_1(\xi) \gtrsim \frac{q(\xi,\xi,\xi)}{a''(\xi)},  
\end{equation}
as necessary in order for scattering to hold.
\medskip 

\item[(iii) The relative size of $\Lambda_0$, $\Lambda_1$]
It is natural to expect the function space $X$ in our result to be stable with respect to dyadic frequency localizations. Commuting $x$ with localizations leads to the requirement
\begin{equation}\label{localization}
\Lambda_1(\xi) \lesssim \la \xi \ra \Lambda_0(\xi).    
\end{equation}
The two norms in \eqref{def:X} will be  close  in scaling in the high frequency limit  when we are close to equality 
in this relation.

\medskip

\item[(iv) The normalization of $q$] Here we observe that our problem admits the invariance 
\[
\Lambda_0 \to b \Lambda_0, 
\qquad
\Lambda_1 \to b \Lambda_1,
\qquad
q(\xi_1,\xi_2,\xi_3) \to b^{-1}(\xi_1-\xi_2+\xi_3) b(\xi_1) b(\xi_2) b(\xi_3)q(\xi_1,\xi_2,\xi_3)
\]
obtained via the substitution $u = B(D) v$.
Because of this, we can normalize $q$ at least in the region of balanced frequency interactions,
\begin{equation}\label{q-symbol}
|q(\xi_1,\xi_2,\xi_3)| \lesssim 1 \quad \text{when} \qquad |\xi_1| \approx |\xi_2| \approx |\xi_3| \approx |\xi_1-\xi_2+\xi_3|.
\end{equation}

\item[(v) Semilinear vs. quasilinear] While the size of $q$
in the balanced region contributes to resonant interactions,
a large size in the imbalanced region may provide a quasilinear
term, for which just looking at the size is not enough to 
even guarantee local well-posedness. In this article 
we will simply avoid this issue, and simply assume that $q$
is bounded everywhere, with symbol type regularity separately 
in each component.
\end{description}

Based on the discussion above, for the results in this section we will 
consider the following set-up for the symbols $a$ and $q$:

\begin{enumerate}[label=(\alph*)]
    \item The symbol $a$ is smooth, convex, with $a''$ as in \eqref{choose-a}, and symbol type regularity.
    \medskip
    
    \item The symbol $q$ is smooth, real on the diagonal, and has 
    the form 
    \begin{equation}
    q(\xi_1,\xi_2,\xi_3) = \qq( \xi_1,\xi_2,\xi_3,\xi_1+\xi_3-\xi_2),   
    \end{equation}
(i.e. the trace of $\qq$ on the diagonal $\xi_1-\xi_2+\xi_3-\xi_4=0$), where $\qq$ is bounded and with separate symbol type regularity in all variables,    
\begin{equation}
  |\partial_{\xi_1}^{\alpha_1} \partial_{\xi_2}^{\alpha_2}
  \partial_{\xi_3}^{\alpha_3} \partial_{\xi_4}^{\alpha_4}
  \qq( \xi_1,\xi_2,\xi_3,\xi_4) | \lesssim \prod_{j=1}^4\la\xi_j\ra^{-\alpha_j}.
\end{equation}
Here $\xi_1-\xi_2+\xi_3$ appears naturally as the output frequency in the trilinear interaction.
\end{enumerate}

Now we turn our attention to the regularity required by our result,
which is determined by the symbols $\Lambda_0$ and $\Lambda_1$.  This will be  chosen to be
 \begin{equation}\label{choose-Lambda}
\Lambda_0 = \la\xi\ra^{s_0}, \qquad \Lambda_1 =  \la\xi \ra^{s_1},
\end{equation}
so that 
\[
\|u\|_{X}^2 = \| u\|_{H^{s_0}}^2 + \|Lu\|_{H^{s_1}}^2.
\]
It remains to discuss the choice of $s_0$ and $s_1$, which we would like to have 
as low as possible. So far, the heuristics above indicate that the following two conditions,
arising from \eqref{no-soliton} and \eqref{localization}, are required:
\begin{equation}\label{choose-s0}
s_0 + s_1 \geq -\sigma, \qquad s_1 \leq s_0+1   . 
\end{equation}
Within this range, we note the best case scenario
\begin{equation}\label{choose-s-ideal}
s_0 = - \frac{\sigma+1}{2}, \qquad s_1 = -\frac{\sigma-1}2.
\end{equation}
Indeed, this would correspond to a scale invariant result in the pure power case 
in the high frequency limit\footnote{ Here, if $\sigma < -1$, then  we can normalize in a Galilean fashion to set $a'(\infty)=0$ before scaling.}.
We retain these values as an ideal but unreachable goal, and seek to at least get close to these values. In particular, it is helpful to allow for  at least a small positive margin in the first inequality in \eqref{choose-s0}, 
 in order to be able to allow for the small power type growth in \eqref{energy}. Even with this proviso, we will only be able to get close to the ideal setting in \eqref{choose-s-ideal} only for $\sigma = -3$  (i.e. exact Klein-Gordon) and for the restricted range  $-1 \leq \sigma \leq 1$ (i.e. weak NLS).

 The conditions in \eqref{choose-s0} above are required by the 
 behavior of balanced interactions. However, managing imbalanced frequency
 interactions imposes further restrictions, which will be reflected in the choices below.
 To summarize, we will distinguish several cases, where $\delta$  stands for a small positive constant:
 
 \bigskip
 \begin{enumerate}[label=(\Roman*)]
     \item \textbf{Weak Klein-Gordon}, $\sigma < -3$. Then we set 
     \[
     s_0= -\sigma-2 \qquad  s_1 = -\sigma -1.
     \]
     \medskip
     \item \textbf{Intermediate Klein-Gordon}, $-3 \leq \sigma < -2$.
  Then we set 
\[
  s_0= 1+\delta, \qquad 
     s_1 =- \sigma-1
 \]    
      \item \textbf{Strong Klein-Gordon} $-2 \leq \sigma < -1$.
  Then we set 
\[
  s_0= -\sigma - 1 + \delta, \qquad 
     s_1 =1. 
\]

     \medskip
     \item \textbf{Weak NLS}, $-1 \leq \sigma \leq 1$.
  Then we set 
  \[
  s_0= -\frac{\sigma+1}2+\delta, \qquad 
     s_1 =-\frac{\sigma-1}2. 
 \]
     \medskip
      \item \textbf{Strong NLS} (or KdV+), $1 < \sigma$.
  Then we set 
 \[
  s_0= -1, \qquad 
     s_1 = 0.   
\]     
     
 \end{enumerate}

 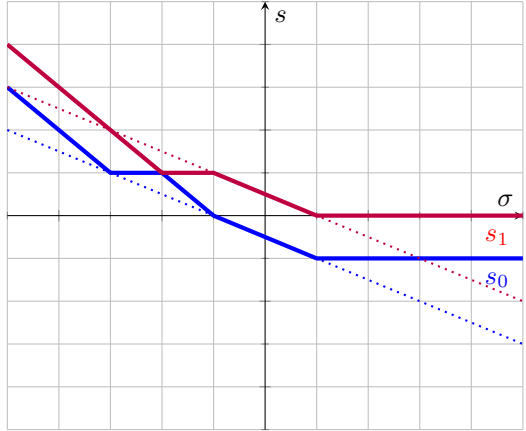
\begin{figure}[h]
\begin{tikzpicture}
\begin{axis}[grid=both,ymin=-5,ymax=5,xmax=5,xmin=-5,xticklabel=\empty,yticklabel=\empty,anchor=origin,
               minor tick num=1,axis lines =middle,xlabel=$\sigma$,ylabel=$s$]
               %label style ={at={(ticklabel cs:-1.-1)}}]
 
\draw[-,ultra thick,blue] (-5,3)--(-3,1)--(-2,1)--(-1,0)--(1,-1)--(5,-1) ;
\draw[-,ultra thick,purple] (-5,4)--(-3,2)--(-2,1)--(-1,1)--(1,0)--(5,0)  ;
\draw[-,thick,dotted,blue] (-5,2)--(5,-3) ;
\draw[-,thick,dotted,purple] (-5,3)--(5,-2)  ;

\node[red] at (axis cs:4.5,-0.5) {$s_1$}; 
\node[blue] at (axis cs:4.5,-1.5) {$s_0$}; 

 \end{axis}

%[yticklabels={,,}, xticklabels={,,},  axis y line=center, axis x line=middle, xlabel=$x$,ylabel=$t$ ] 
%\addplot[smooth,magenta,mark=none, domain=0:2] {x}; 
%\addplot[smooth,blue,mark=none, domain=0:1.5] {1.5*x-x^2/8}; 
%\addplot[smooth,blue,mark=none, domain=0:2.5] {.5*x+x^2/8}; 
%\addplot[smooth,black,mark=none, domain=.5:0] {0}; 
%\addplot[smooth,black,mark=, domain=1.18:2.1] {1.6};
%\node at (axis cs:1.3,0.2) {$\delta x = \sqrt{t}, \ \ \delta \xi = \dfrac{1}{\sqrt{t}}$ };
%\node[rotate=45] at (axis cs:1.1,1.2) {$x=vt$};
%\node at (axis cs:1.5,1.7) {$ \sqrt{t}$};

\end{tikzpicture}
\caption{The Sobolev exponents $s_0$ and $s_1$ as a function of $\sigma$.
Dotted lines indicate the best case scenario in \eqref{choose-s-ideal}.}
\end{figure}

Under these assumptions, we have 

\begin{theorem}\label{t:no-comp}
  Assume that the symbols $a,q,\Lambda_0,\Lambda_1$ are as above,
and that the  initial data for our equation \eqref{main}
satisfies:
\begin{equation}\label{small+}
\| u(0)\|_{X} \lesssim \epsilon \ll 1.
\end{equation}
Then the solution exists globally  in time, with 
energy bounds 
\begin{equation}\label{energy+}
\| u(t)\|_{X}  \lesssim \epsilon t^{C \epsilon^2},
\end{equation}
and pointwise decay 
\begin{equation}\label{disp-decay+}
\|\la D \ra^{\frac{\delta}4} u(t)\|_{L^\infty}  \lesssim \frac{\epsilon}{\sqrt t}.
\end{equation} 
\end{theorem}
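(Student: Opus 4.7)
The overall architecture will mirror the bootstrap strategy of Theorem~\ref{t:comp}: assume a pointwise decay bootstrap of the form $\|\langle D\rangle^{\delta/4} u(t)\|_{L^\infty} \lesssim C\epsilon \langle t\rangle^{-\frac12}$, then establish (a) the energy bound \eqref{energy+} via Gronwall-type arguments, and (b) the pointwise bound \eqref{disp-decay+}, closing with a constant better than $C$. The new feature is that, since $q$ is no longer compactly supported, all frequency-localized arguments must be done dyadically, and the scaling parameters $R,M$ of Proposition~\ref{p:vf} now depend on the dyadic frequency $2^k$: at that scale $R\approx 2^{\sigma k}$ and $M\approx 2^{-k}$ (the latter coming from the symbol regularity of $a$).

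For the energy estimates, the bound on $\|u\|_{H^{s_0}}$ follows by a direct $L^2$-pairing argument as in Proposition~\ref{p:energy}, once one verifies the trilinear $H^{s_0}$ boundedness of $Q(u,\bar u,u)$, which I would handle via a dyadic paraproduct decomposition: balanced interactions are controlled by the $L^\infty$ bootstrap at the margin $\delta/4>0$ using the normalization \eqref{q-symbol}, and the imbalanced regions (high$\times$low$\times$low, low$\times$high$\times$low, etc.) are estimated using the separate symbol-type bounds on $\qq$. For $\|Lu\|_{H^{s_1}}$, I would repeat the construction of the modified vector field $L^{NL}u = Lu + tC(u,\bar u, u)$, where now a generalized division lemma is needed: one seeks a symbol $c$ on $\Char L$ solving
\[
\ell(x,\xi) q(\xi_1,\xi_2,\xi_3) = tc\cdot (a(\xi_1)-a(\xi_2)+a(\xi_3)-a(\xi)) + \sum_j c_j \ell(x,\xi_j),
\]
and one must verify that $c$ satisfies symbol bounds compatible with the weighted $H^{s_1}$ norm in the imbalanced regimes; this is where the specific conditions $s_0+s_1\geq -\sigma+\delta$ and $s_1\leq s_0+1$ enter through the matching of weight losses.

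For the pointwise decay, I would apply Proposition~\ref{p:vf} separately at each dyadic frequency $2^k$, which gives
\[
\|P_k u\|_{L^\infty}^2 \lesssim t^{-1} 2^{-\sigma k}(\|P_k u\|_{L^2} \|LP_k u\|_{L^2} + 2^{-k}\|P_k u\|_{L^2}^2).
\]
Inserting the bootstrap energy bounds $\|P_k u\|_{L^2}\lesssim \epsilon t^{C\epsilon^2} 2^{-s_0 k}$ and $\|LP_k u\|_{L^2}\lesssim \epsilon t^{C\epsilon^2} 2^{-s_1 k}$ yields
\[
2^{\frac{\delta}{4} k}\|P_k u\|_{L^\infty} \lesssim \epsilon t^{-\frac12+C\epsilon^2} 2^{-\frac12(\sigma+s_0+s_1-\frac{\delta}{2})k},
\]
which is summable in $k$ precisely under the margin built into each of the regimes (I)--(V). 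This gives the pointwise bound modulo the $t^{C\epsilon^2}$ loss; to remove it and obtain $\epsilon t^{-1/2}$ without a power loss, I would run wave packet testing at each dyadic frequency $2^k$: define $\gamma_k(t,v) = \langle P_k u, \uu_v^{(k)}\rangle$ using wave packets $\uu_v^{(k)}$ adapted to the scales $\delta x = (t 2^{\sigma k})^{1/2}$, $\delta\xi = (t 2^{\sigma k})^{-1/2}$, and reproduce Propositions~\ref{p:u-gamma} and~\ref{p:gamma-er} dyadically to show that each $\gamma_k$ satisfies the asymptotic ODE modulo a source with better than $t^{-1}$ decay.

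The main obstacle, in my view, will be the imbalanced/quasilinear-like interactions in the energy estimates for $L^{NL}u$ at the $H^{s_1}$ level, specifically verifying that the symbol $c$ from the division lemma admits acceptable bounds across all five regimes and that the resulting trilinear corrections can be estimated by $\|u\|_{L^\infty}^2\|Lu\|_{H^{s_1}}$ plus perturbative remainders. The secondary obstacle is the treatment of the generalized Klein-Gordon regime ($\sigma<-1$), where the group velocity range $V$ is bounded and the wave packet analysis degenerates near the boundary of $V$; here Proposition~\ref{p:vf-ell} would be used to obtain the extra decay needed outside $V$, while the modifications inside $V$ proceed as in the NLS case.
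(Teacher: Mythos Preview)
Your overall bootstrap architecture is correct and matches the paper: bootstrap the pointwise bound, prove energy estimates for $u$ and $Lu$, recover the pointwise bound via the vector field estimate plus wave packet testing to remove the $t^{C\epsilon^2}$ loss. The dyadic application of Proposition~\ref{p:vf} is also right.

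The genuine gap is in your treatment of $L^{NL}u$. You propose to extend the division lemma to the full symbol $q$ and then verify that the resulting correction symbol $c$ has acceptable bounds in the imbalanced regimes. This is exactly what the paper identifies as \emph{not} working: for unbalanced interactions the symbol $c$ becomes poorly behaved, and the expression $C_j(u,\bar u,Lu)$ fails the $H^{s_1}$ bound when the $Lu$ factor sits at a frequency very different from the output frequency (unless $s_1=0$). The paper's actual strategy is to split $Q=Q^{bal}+Q^{unbal}$, apply the correction only to $Q^{bal}$ (where $c$ has clean size $\lambda^{-1}$ at frequency $\lambda$), and set the correction to zero for $Q^{unbal}$. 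The unbalanced contribution $LQ^{unbal}(u,\bar u,u)$ is then estimated \emph{directly} in $H^{s_1}$ (Lemma~\ref{l:LQunbal}), using the key observation that in any dyadic velocity region $\tilde J_\lambda$ at least one of the three input frequencies is mismatched, so the corresponding factor enjoys the elliptic bounds of Proposition~\ref{p:vf-ell}. This elliptic gain, not a global division, is what closes the $Lu$ estimate; your proposal does mention Proposition~\ref{p:vf-ell} but only for the Klein-Gordon boundary region, not for this central role in the energy estimate.

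A secondary omission concerns the Klein-Gordon case $\sigma<-2$: there the wave packet scale $\delta x \approx (t a''(\lambda))^{1/2}$ exceeds the size of the velocity region $\tilde J_\lambda$ once $\lambda$ is above the time-dependent threshold $\lambda_0$ defined by $t\lambda_0^2 a''(\lambda_0)=1$, so the asymptotic profile $\gamma$ can only be defined on the restricted domain $\{\lambda<\lambda_0\}$. Correspondingly, for each $v\in J_\lambda$ the asymptotic ODE must be initialized not at $t=1$ but at a late time $t\approx \lambda^N$, using the vector field bound there; your dyadic $\gamma_k$ scheme would need this modification to go through.
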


Here $\delta$ is a small positive parameter, which depends on the choice of $s_0$ and $s_1$
above, and which can be taken to be exactly the one in the choice of $s_0$ in cases 
(II)-(III)-(IV) above. 

\begin{remark}
Our choice of exponents $(s_0,s_1)$ in the cases (I)-(V) above was guided by the goal of coming as close as possible to the 
end-points of the two necessary conditions 
in \eqref{choose-s0}, giving priority 
to the first one. To minimize technicalities
we have fixed the choice of some exponents rather than giving a range. We note however
that increasing $s_0$ while keeping 
$s_1$ fixed is straightforward. This is also
connected with the fact that we have simply assumed that the symbol for the cubic nonlinearity is bounded, rather than imposing various polynomial bounds.
Many of the restrictions arising in the proofs are of technical nature rather 
than fundamental, and arise only in the study of unbalanced interactions, which is 
secondary to our main purpose. We leave it to the reader to explore other variants of these results, as needed.
\end{remark}

One may also supplement Theorem~\ref{t:no-comp} with a matching result on modified scattering and asymptotic completeness,
which exactly mirrors the result provided
in Theorem~\ref{t:scat-model} in the model case. 

In order to best capture the behavior of the asymptotic profile $W$ at high frequencies, it is best to parametrize $W$
by $\xi_v$ rather than by $v$.
To account for this change,  we define 
the asymptotic solution $u_{asympt}$ 
associated to a profile $W$ as
\[
u_{asympt}(t,x) = 
\left\{ 
\begin{array}{lc}
\frac{1}{\sqrt{t}}W(\xi_v) e^{i q(\xi_v,\xi_v,\xi_v) \ln t |W(\xi_v)|^2} e^{it \phi(v)} & \mbox{for } v \in a'(\R) \cr 
0 &\, \mbox{for }  v \not\in a'(\R),
\end{array}
\right. \qquad x = vt ,
\]
where the second alternative occurs only 
in the generalized Klein-Gordon case (I)-(II)-(III).

Then we consider the relation between the initial data $u_0$ and the asymptotic profile $W$, via bounds for the difference
\begin{equation}\label{scat-error+}
e(t,x)=   u(t,x) -  u_{asympt}(t,x) .
\end{equation}

\begin{theorem}
a) For each initial data $u_0$ satisfying the smallness condition \eqref{small+}, there exists an asymptotic profile $W \in H^{1-C\epsilon}_{loc}(\R)$ with the property that 
\begin{equation}\label{which-W}
\| \la \xi \ra^{s_0 + \frac{\sigma}2-C\epsilon^2} W \|_{L^2_{\xi}}
+ 
\| \la \xi \ra^{s_1+ \frac{\sigma}2- C\epsilon^2}  W\|_{H^{1-C\epsilon}_\xi}    \lesssim \epsilon
\end{equation}
for which the above difference satisfies the $L^2$ bounds
\begin{equation}
\| e \|_{L^2_x} \lesssim \epsilon t^{-\delta_1}, \qquad \delta_1 > 0
\end{equation}
as well as the $L^\infty$ bounds
\begin{equation}
\| e \|_{L^\infty} \lesssim \epsilon t^{-\frac12 - \delta_2}, \qquad \delta_2 > 0.
\end{equation}
Furthermore, the map $u_0 \to W$ is injective.

b) For each $W$ satisfying 
\begin{equation} \label{which-W-in}
\| \la \xi \ra^{s_0+\frac{\sigma}2+C\epsilon^2} W \|_{L^2}
+ 
\| \la \xi \ra^{s_1+\frac{\sigma}2-C\epsilon}  W\|_{H^{1+C\epsilon}}    \lesssim \epsilon
\end{equation}
there exists an associated initial data $u_0$ satisfying the smallness condition \eqref{small} so that $W$ is the asymptotic profile of $u_0$.
\end{theorem}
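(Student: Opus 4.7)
The plan is to adapt the proof strategy of Theorem~\ref{t:scat-model} (the model-case scattering theorem) to the general setting, using the wave packet testing machinery of Section~\ref{s:wp} as the core tool. For part (a), I would start with the global solution $u$ furnished by Theorem~\ref{t:no-comp}, which satisfies the energy bound $\|u(t)\|_X \lesssim \epsilon t^{C\epsilon^2}$ and the pointwise decay $\|\la D\ra^{\delta/4} u(t)\|_{L^\infty} \lesssim \epsilon t^{-1/2}$. The asymptotic profile function is defined by the wave packet test $\gamma(t,v) = \langle u(t), \uu_v\rangle$ as in \eqref{def-gamma}. Repeating the analysis of Propositions~\ref{p:gamma}, \ref{p:u-gamma}, and \ref{p:gamma-er}, but now keeping track of frequency-dependent weights, produces the approximate asymptotic equation
\[
\dot\gamma(t,v) = iq(\xi_v,\xi_v,\xi_v) t^{-1} \gamma(t,v)|\gamma(t,v)|^2 + f(t,v),
\]
with $f$ decaying faster than $t^{-1}$ in $L^\infty_v$ and in an appropriate weighted $L^2_v$. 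The gauge substitution $\gamma = \tgamma \exp\!\bigl(iq(\xi_v,\xi_v,\xi_v)\int_1^t s^{-1}|\gamma|^2\, ds\bigr)$ then reduces matters to showing $\tgamma$ is Cauchy as $t\to\infty$; its limit defines $W_\gamma(v) = W(\xi_v)$ up to an $O(1)$ phase correction. The $L^2$ and $L^\infty$ error bounds for $e$ then follow from Proposition~\ref{p:u-gamma} and the explicit decay rate of $f$.

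To obtain the weighted Sobolev bounds \eqref{which-W} on $W$, the key ingredient is the change of variables $v \leftrightarrow \xi_v$ with Jacobian $dv = a''(\xi)\, d\xi \approx \la\xi\ra^\sigma d\xi$, which converts $\|F(\xi_v)\|_{L^2_v}$ into $\|\la\xi\ra^{\sigma/2} F\|_{L^2_\xi}$ and explains the $\sigma/2$ shift in the weights. I would establish weighted analogues of Proposition~\ref{p:gamma} by pairing $u$ against $\Lambda_0(D)\uu_v$ for the $L^2$ bound, and using Lemma~\ref{l:dv-uv} to express $\partial_v \gamma$ in terms of $\Lambda_1(D)Lu$ for the $\partial_v$ bound, via the same Young-convolution argument. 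The loss of $C\epsilon$ Sobolev regularity in \eqref{which-W} is forced on the one hand by the $t^{C\epsilon^2}$-growing energy bound on $Lu$, and on the other by an $\epsilon$-loss from differentiating the logarithmic modified phase $e^{iq\ln t|W|^2}$. Injectivity of $u_0 \mapsto W$ follows from the quantitative decay of the error together with uniqueness for \eqref{main}.

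Part (b) is handled by a backward iteration: given $W$ obeying \eqref{which-W-in}, I form the approximate solution $u_{\text{asympt}}$ and solve \eqref{main} backward from a large time $T$ with terminal data $u_T(T) = u_{\text{asympt}}(T)$. The energy estimates of Proposition~\ref{p:Lu}, the vector field bound of Proposition~\ref{p:vf}, and wave packet testing applied to this backward flow yield $X$-bounds on $[1,T]$ uniform in $T$. A Cauchy-in-$T$ argument at $t=1$ in a slightly weaker norm then extracts $u_0 = \lim_{T\to\infty} u_T(0)$; the forward analysis of part (a), applied to this $u_0$, identifies $W$ as its asymptotic profile. The slightly stronger regularity in \eqref{which-W-in}, namely $+C\epsilon^2$ versus the $-C\epsilon^2$ in \eqref{which-W}, is precisely the margin needed to absorb the logarithmic factors in the modified phase into the bootstrap scheme.

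The main obstacle I anticipate is the careful bookkeeping of frequency weights in the wave packet testing for part (a): the wave packet $\uu_v$ is frequency-localized at $\xi_v$, but $u$ only lives in a weighted Sobolev space, so the pairings must be reorganized to pull out the correct powers of $\la\xi_v\ra$ without sacrificing integrability in the convolution-style bounds. Handling the two qualitatively different regimes --- the generalized NLS case ($\sigma \geq -1$, with $a'(\R) = \R$) and the generalized Klein-Gordon case ($\sigma < -1$, where $W$ is compactly supported in $v$ but corresponds to unbounded $\xi_v$) --- raises an additional technical point near the endpoints of $a'(\R)$, where the wave packet normalization factor $a''(\xi_v)^{-1/2}$ degenerates. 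As the overall scheme closely mirrors \cite{NLS}, these difficulties should be tractable rather than fundamental.
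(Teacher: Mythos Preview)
Your proposal is correct and aligns with the approach the paper itself indicates: the paper explicitly omits this proof, stating that it ``follows again the same outline as in \cite{NLS},'' just as for Theorem~\ref{t:scat-model}. One small redundancy worth noting: you propose to ``establish weighted analogues of Proposition~\ref{p:gamma}'' and to repeat Propositions~\ref{p:u-gamma}, \ref{p:gamma-er} with frequency weights, but the paper has already carried out exactly this in the general setting as Lemmas~\ref{l:gamma}, \ref{l:u-gamma}, and \ref{l:asympt-err}, and remarks that the exponents in \eqref{which-W} and \eqref{which-W-in} ``closely bracket the corresponding exponents in Lemma~\ref{l:gamma}.'' So your change-of-variables observation $dv = a''(\xi)\,d\xi \approx \langle\xi\rangle^\sigma d\xi$ is precisely what converts the $v$-space bounds of Lemma~\ref{l:gamma} into the $\xi$-space bounds \eqref{which-W}, and you can invoke those lemmas directly rather than redoing the model-case propositions. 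Similarly, for the backward iteration in part (b) the relevant energy estimate is Proposition~\ref{p:ee-Lu} rather than Proposition~\ref{p:Lu}.
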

Just as in the case of Theorem~\ref{t:scat-model},  this result is also provided without proof. The proof follows again the same outline as in \cite{NLS}. The exponents in \eqref{which-W},
respectively \eqref{which-W-in} closely bracket
the corresponding exponents in Lemma~\ref{l:gamma}.

To avoid technicalities due to the many cases that would need to be considered, we do not attempt to specify exactly the positive constants $\delta_1$ and $\delta_2$ (which are independent of $\epsilon$).

We remark that the choice of the exponents
$s_0$ and $s_1$, and more precisely the
second bounds  in \eqref{choose-s0} guarantee
that \eqref{which-W} satisfies the pointwise bound
\[
|W(\xi)| \lesssim \epsilon \la \xi\ra^{-\frac{s_0+s_1}2+ \sigma -C \epsilon^2}.
\]
From here, the first bound  in \eqref{choose-s0}, if strict, guarantees that 
\[
\lim_{\xi \to \pm \infty} W(\xi) = 0.
\]
This is particularly interesting in the generalized Klein-Gordon case $\sigma < 1$, where it implies
that the asymptotic solution decays to zero at the edge of its support.

\bigskip

The proof of Theorem~\ref{t:no-comp} follows the same outline as the proof of Theorem~\ref{t:comp}, using a bootstrap argument. The bootstrap assumption will be 
\begin{equation}\label{disp-decay+boot}
\|\la D \ra^{\frac{\delta}8} u(t)\|_{L^\infty}  \lesssim \frac{C\epsilon}{\sqrt t}.
\end{equation}

Using the bootstrap assumption, we first prove the energy bound \eqref{energy+} with $C$ replaced by $C^2$. 
By vector field bounds, the energy estimates will imply a pointwise estimate of the 
form 
\begin{equation}\label{disp-decay+first}
\|\la D \ra^{\frac{\delta}2} u(t)\|_{L^\infty}  \lesssim \frac{\epsilon}{\sqrt t} t^{C^2 \epsilon^2},
\end{equation} 
which would give the bound \eqref{disp-decay+} with an additional $t^{C^2 \epsilon^2}$ loss, but also with a high frequency gain. 
To rectify that, we use our wave packet method to define 
a suitable asymptotic profile $\gamma$, which is then shown to be an approximate solution for the  asymptotic equation. This will allow us to obtain pointwise bounds for the asymptotic profile without the loss, which are the transferred back to $u$.
In the rest of the section, we successively discuss each of the steps of the proof, following the template of the model problem.

\subsection{ Dyadic decompositions} \label{s:dyadic}
Here we motivate and describe the dyadic decompositions that will be used in the sequel. In particular, these will turn out to depend on the ranges for $\gamma$.

\bigskip

\emph{1. The frequency decomposition}.  Here instead of the classical base $2$ dyadic decomposition we will use narrower ranges,
\[
\lambda = (1 + \mu)^m, \qquad m \in \N, \qquad 0 < \mu \ll 1,
\]
with the understanding that at frequencies $\lesssim 1$ we simply split into intervals of size $\mu$. Here $\mu$ is a small universal parameter. The motivation for this choice is to allow for a clean classification of cubic interactions into balanced and unbalanced simply depending on the relative values of $m$. 

We denote the corresponding frequency regions by $I_\lambda^{\pm}$. Here the $\pm$ signs stand for 
positive and negative frequencies, and will be at times omitted if they are not useful. We will also use an adapted partition of unity, again using the $\pm$ superscripts where needed.
\[
1 = \sum_\lambda \nu_\lambda(\xi).
\]

\medskip

\emph{2. The velocity decomposition}. At a given time $t$, we partition the spatial real axis 
corresponding to velocities associated to frequencies in $I_\lambda$. Precisely, 
we denote by $J_\lambda^\pm = a'(I_\lambda^\pm)$ the velocity ranges associated to frequencies in $I_\lambda$, and by $\tilde J_\lambda^\pm$ the corresponding spatial intervals, $\tilde J_\lambda^{\pm} = t J_\lambda^{\pm}$.
We can compute the size of these regions depending on the 
parameter $\gamma$,
\[
|J_\lambda| \approx \lambda a''(\lambda), \qquad  |\tilde J_\lambda| \approx t \lambda a''(\lambda),
\]
where we simply denote $a''(\lambda) \approx |\lambda|^{\sigma}$. Within each interval $J_\lambda$,
respectively $\tilde J_\lambda$ we will choose reference points $v_\lambda$, respectively $x_\lambda$.

Depending on the value of $\sigma$, we distinguish several scenarios:

\begin{enumerate}[label=\alph*)]
    \item The generalized  NLS case, $\sigma \geq -1$. Here $\tilde J_\lambda$ are increasing in size with $\lambda$, and cover the entire real line (except for the degenerate case $\sigma = -1$ where they have equal size). In this case we have an associated spatial partition of unity
\[
1 = \sum_\lambda \chi_\lambda^\pm(x), \qquad \supp \chi_\lambda^\pm \subset 2\tilde J_\lambda^\pm.
\]
\medskip
\begin{figure}[h]
\begin{tikzpicture}
%\tikzset{>=latex}

 [shorten >=1pt, node distance=2cm,auto]
            %\draw[](0,0)--(7,7);
             \draw[->] (0,0)--(5,0) node[anchor=north west] {$x$};;
             \draw[-] (0,0)--(3.5,5);
             \draw[-] (0,0)--(2,5);
             \draw[-] (0,0)--(1,5);
             \draw[->] (0,0)--(0,5) node[anchor=south east] {$t$};

             \draw[-] (0,0)--(-5,0);
            \draw[-] (0,0)--(-2,5);
             \draw[-] (0,0)--(-1,5);
             \draw[] (0,0)--(-3.5,5);

             \draw[ultra thick,blue] (0.8,4)--(1.65,4);
               \draw[-] (-4,4)--(4,4);
             \node[blue] at  (1.2,4.3) {$\tilde J_\lambda$};
\end{tikzpicture}
\caption{The velocity decomposition in Case (a), $\sigma \geq -1$: all group velocities are allowed.}
\end{figure}
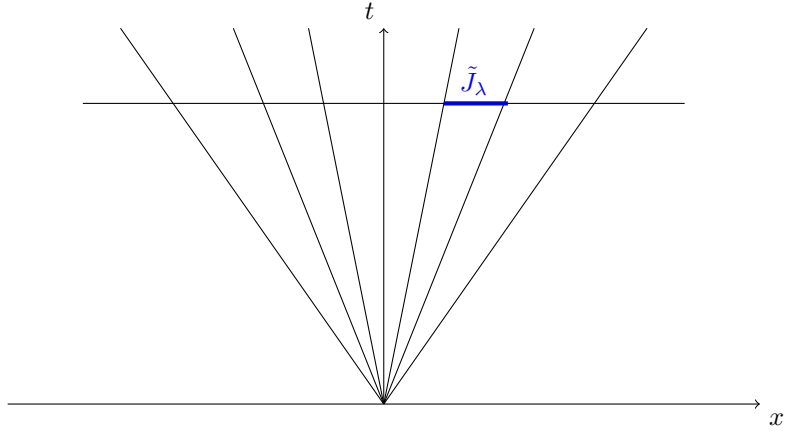
\item The strong Klein-Gordon case,  $-2 \leq \sigma < -1$. 
Here $\tilde J_\lambda$ are decreasing in size with $\lambda$, but their 
sizes $t \lambda a''(\lambda)$ are large enough to dominate the associated uncertainty principle scale $\lambda^{-1}$ as $\lambda \to \infty$. On the other hand, they do not cover the entire  real line, only the range $\tilde J_{in}=(ta'(-\infty),ta'(+\infty))$.
Thus we consider the partition of unity
\[
1 = \sum_\lambda \chi_\lambda^\pm(x) + \chi_{out}(x),
\]
where $\chi_{out}$ is the characteristic function of the outer region $\R \setminus J_{out}$.

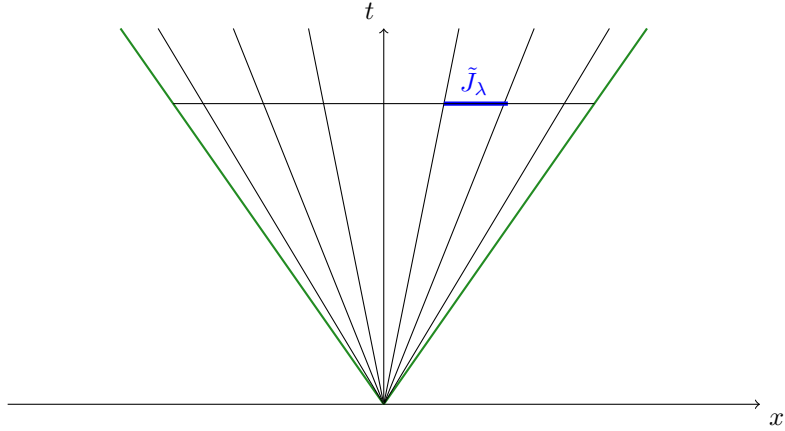
\begin{figure}[h]
\begin{tikzpicture}
%\tikzset{>=latex}

 [shorten >=1pt, node distance=2cm,auto]
            %\draw[](0,0)--(7,7);
             \draw[->] (0,0)--(5,0) node[anchor=north west] {$x$};;
             \draw[-,thick,ForestGreen] (0,0)--(3.5,5);
             \draw[-] (0,0)--(2,5);
             \draw[-] (0,0)--(1,5);
             \draw[->] (0,0)--(0,5) node[anchor=south east] {$t$};
            \draw[-] (0,0)--(3,5);
             \draw[-] (0,0)--(-5,0);
            \draw[-] (0,0)--(-2,5);
             \draw[-] (0,0)--(-1,5);
             \draw[-,thick,ForestGreen] (0,0)--(-3.5,5);
            \draw[-] (0,0)--(-3,5);
            
             \draw[ultra thick,blue] (0.8,4)--(1.65,4);
               \draw[-] (-2.8,4)--(2.8,4);
             \node[blue] at  (1.2,4.3) {$\tilde J_\lambda$};
\end{tikzpicture}
\caption{The velocity decomposition in Case (b), $-2 \leq \sigma < -1$: all dispersive waves are localized in an angle. }
\end{figure}

\medskip

\item The Klein-Gordon case $ \sigma <  -2$. 
Here $\tilde J_\lambda$ are also decreasing in size with $\lambda$, but their 
sizes $t \lambda a''(\lambda)$  no longer dominate the associated uncertainty principle scale $\lambda^{-1}$ as $\lambda \to \infty$. For this reason, based on this comparison we define the time dependent threshold $\lambda_0$ by
\begin{equation}\label{lambda0}
t \lambda_0^2 a''(\lambda_0) = 1
\end{equation}
and, depending on $\lambda_0$, we separate into low and high frequencies, and consider the partition of unity
\[
1 = \sum_{\lambda > \lambda_0}  \chi_\lambda^\pm(x) + \chi_{hi}(x) + \chi_{out}(x),
\]
where $\chi_{hi}$ selects a region of size $\lambda_0^{-1}$.
Here the intuition is that up to frequency $\lambda_0$ we see dispersive effects at time $t$,
whereas above that we are simply solving a transport equation at leading order.

\begin{figure}[h]
\begin{tikzpicture}
%\tikzset{>=latex}

\begin{axis}[ ymin=0,ymax=7,xmax=7,xmin=-7, xticklabel=\empty,yticklabel=\empty,  axis lines = middle, axis line style={draw=none},  tick style={draw=none}];

 \draw[->,thick] (0,0)--(6,0) node[anchor=south west] {$x$};
             \draw[thick, ForestGreen] (0,0)--(5,5);
             \draw[] (2,2.5)--(4,5);
              \draw[] (.45,1.5)--(1.5,5);
             \draw[] (1.21,1.85)--(3,5);
             \draw[->, thick] (0,0)--(0,5) node[anchor=south east] {$t$};
            \draw[] (3.15,3.5)--(4.5,5);
             \draw[thick] (0,0)--(-5,0);
              \draw[] (-.45,1.5)--(-1.5,5);
             \draw[] (-1.21,1.85)--(-3,5);
            \draw[] (-2,2.5)--(-4,5);
             %\draw[] (0,0)--(-1,5);
             \draw[thick, ForestGreen] (0,0)--(-5,5);
            \draw[] (-3.15,3.5)--(-4.5,5);
           
             \draw[ultra thick,blue] (1.2,4)--(2.45,4);
               \draw[-] (-4,4)--(4,4);
             \node[blue] at  (1.9,4.4) {$\tilde J_\lambda$};
  \addplot+[no markers]{(x^2+2)^.5} [smooth, blue, mark=none, domain=-3:3] ; 
\end{axis}  
\end{tikzpicture}
\caption{The velocity decomposition in Case (c),  $\sigma < -2$: the dispersive region is above the blue curve. }
\end{figure}
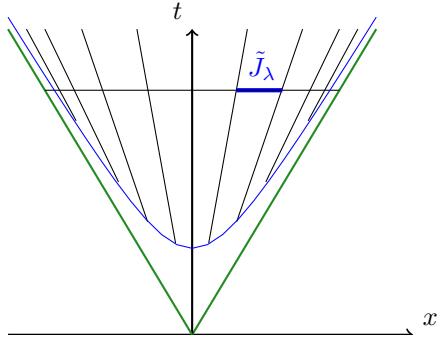
\end{enumerate}

\medskip

\emph{3. The decomposition of $Q$.}
For the trilinear form $Q$, it will be 
very useful to split it into a balanced and an unbalanced component,
\[
Q(u,\bar u,u) = Q^{bal}(u,\bar u, u) + Q^{unbal}(u,\bar u ,u),
\]
depending on the size of the three interacting frequencies. 
Precisely, at the symbol level we set
\[
\qq^{bal}(\xi_1,\xi_2,\xi_3,\xi_4) = \chi_{\la \xi_1 \ra \approx \la \xi_2 \ra
\approx \la \xi_3 \ra   \approx \la \xi_4 \ra   }\qq(\xi_1,\xi_2,\xi_3,\xi_4).
\]
Here the balanced part will play the leading role, and  is the one responsible for the  modified scattering behavior. The unbalanced, part, on the other hand, we will want to treat largely in a perturbative manner. However, some technical difficulties will have to be dealt with along the way.

From the perspective of the spatial Littlewood-Paley decomposition defined earlier,
we will essentially think of the two components as combinations of dyadic 
frequency localizations. Precisely, given dyadic frequencies
$\lambda_j = (1+c)^{m_j}$, we will call the quadruplet $(\lambda_1,\lambda_2,\lambda_3, \lambda_4)$ diagonal if 
$\max\{|m_i-m_j|\} \leq 4 $. We denote the diagonal set of frequencies by $\Diag$.
Then we will simply set
\[
Q^{bal}(u,\bar u, u) = \sum_{\lambda_1,\lambda_2,\lambda_3, \lambda_4 \in \Diag}
P_{\lambda_4} Q(u_1, \bar u_2, u_3) ,
\]
respectively 
\[
Q^{unbal}(u,\bar u, u) = \sum_{\lambda_1,\lambda_2,\lambda_3, \lambda_4 \not\in \Diag}
P_{\lambda_4} Q(u_1, \bar u_2, u_3), 
\]
where for brevity we have denoted $u_i : = P_{\lambda_j} u$. We remark that the $P_{\lambda_4}$
projection can be omitted in the case when $\lambda_4$ is comparable to the highest frequency;
this includes in particular the balanced case.

As a trilinear form applied to $u$, the symbol 
of the expression
\[
P_{\lambda_4} Q(u_1, \bar u_2, u_3) 
\]
has the form
\[
q_{\lambda_1,\lambda_2,\lambda_3,\lambda_4}(\xi_1,\xi_2,\xi_3)= \nu_{\lambda_1}(\xi_1) \nu_{\lambda_2}(\xi_2)
\nu_{\lambda_3}(\xi_3) \nu_{\lambda_4}(\xi_4)
\qq(\xi_1,\xi_2,\xi_3,\xi_4), \qquad \xi_1+\xi_3= \xi_2+\xi_4,
\]
and can be thought of as the diagonal trace of a bump function on the rectangle $I_{\lambda_1} \times I_{\lambda_2} \times I_{\lambda_3} \times I_{\lambda_4}$.
Using separation of variables on this product region,
we can expand these localized symbols as rapidly convergent series
\[
q_{\lambda_1,\lambda_2,\lambda_3,\lambda_4}(\xi_1,\xi_2,\xi_3) = \sum_{k=1}^\infty \nu^k_{1}(\xi_1)\nu^k_{2}(\xi_2)
\nu^k_{3}(\xi_3)\nu^k_{4}(\xi_4)\qquad \xi_1+\xi_3= \xi_2+\xi_4,
\]
where the factors have decaying sizes
\[
| \partial^l \nu^k_{j}| \lesssim k^{-N} \lambda_j^{-l}, 
\qquad l \leq N,
\]
for a large $N$.

Since the dyadic multipliers $\nu^k_j$ are bounded in $X$, 
this will allow us to replace $Q_{\lambda_1,\lambda_2,\lambda_3,\lambda_4}$ in all $X$ bounds with product type operators, precisely of the form
\[
Q_{\lambda_1,\lambda_2,\lambda_3,\lambda_4}
(u,\bar u, u) \approx P_{\lambda_4}(u_1 \bar u_2 u_3).
\]
Furthermore, if $\lambda_1, \lambda_2, \lambda_3 \lesssim  \lambda_4$ then we can further eliminate the outer projection $P_{\lambda_4}$. We will refer to this reduction, later in the paper, as \emph{separation of variables}.

\subsection{The vector field bound}
Our primary goal here is to discuss the counterpart of the vector field estimate 
in Proposition~\ref{p:vf}. We will do this in a frequency localized setting, and also consider the better elliptic bounds outside the corresponding dyadic velocity range. Precisely, we have the following linear estimates:
\begin{proposition}\label{p:vf-gen}
a) 
Let $\delta=s_0+s_1+\sigma > 0$.
Then we have the uniform bound 
\begin{equation}\label{vf-point-lambda}
\|\la D\ra^{\frac{\delta}2-} u\|_{L^\infty} \lesssim \| u\|_{X}.
\end{equation}

b) We also have the dyadic elliptic bounds for a function $u_\lambda$ localized at frequency $\lambda$, and $x_\lambda \in \tilde J_\lambda$:
\begin{equation}\label{vf-ell-ll2}
\| (1-\chi_\lambda) (x-x_\lambda)  u_\lambda \|_{L^2} \lesssim \lambda^{-s_1} \| u_\lambda\|_X ,  
\end{equation}
respectively 
\begin{equation}\label{vf-ell-llinf}
| (1-\chi_\lambda) u_\lambda(x) | \lesssim \frac{\lambda^{-s_1+\frac12}}{ |x-x_\lambda|}     \| u_\lambda\|_X  .
\end{equation}
\end{proposition}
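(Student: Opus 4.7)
The plan is to reduce both statements to their frequency-localized precursors, Proposition~\ref{p:vf} for part (a) and Proposition~\ref{p:vf-ell} for part (b), applied dyadically in frequency and then summed in $\lambda$. For a Littlewood-Paley piece $u_\lambda = P_\lambda u$, assumption \eqref{choose-a} together with the symbol bound \eqref{choose-a-reg} give $a''\approx \lambda^\sigma$ and $|a'''|\lesssim \lambda^{\sigma-1}$ on $\{|\xi|\sim\lambda\}$, so after a harmless modification of $a$ off this set we may invoke Proposition~\ref{p:vf} with parameters $R = \lambda^\sigma$ and $M = \lambda^{-1}$. This produces
\[
\|u_\lambda\|_{L^\infty}^2 \lesssim \frac{\lambda^{-\sigma}}{t}\bigl(\|u_\lambda\|_{L^2}\|L u_\lambda\|_{L^2} + \lambda^{-1}\|u_\lambda\|_{L^2}^2\bigr),
\]
and the conversions $\|u_\lambda\|_{L^2}\lesssim \lambda^{-s_0}\|u\|_X$ and $\|Lu_\lambda\|_{L^2}\lesssim \lambda^{-s_1}\|u\|_X$ (the second absorbing the commutator $[L,P_\lambda]=[x,P_\lambda]$ of size $\lambda^{-1}$, which fits exactly under \eqref{choose-s0}) yield $\|u_\lambda\|_{L^\infty}\lesssim \lambda^{-\delta/2}t^{-1/2}\|u\|_X$. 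A geometric summation over the dyadic scales, using the small $-\epsilon$ margin to soak up the logarithmic loss, then delivers \eqref{vf-point-lambda}.

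For part (b), the $L^2$ estimate \eqref{vf-ell-ll2} follows by applying Proposition~\ref{p:vf-ell} to $u_\lambda$ with the same identification, taking $\xi_0$ at the outer edge of $I_\lambda$ and $x_0 = x_\lambda$ at the corresponding edge of $\tilde J_\lambda$. One handles the positive and negative frequency components separately so that each is genuinely supported in a half-line, and then \eqref{vf-ell-L2} reads $\|(x-x_\lambda) u_\lambda\|_{L^2}\lesssim \|Lu_\lambda\|_{L^2} + \lambda^{-1}\|u_\lambda\|_{L^2}\lesssim \lambda^{-s_1}\|u_\lambda\|_X$, which is \eqref{vf-ell-ll2}. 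For the pointwise bound \eqref{vf-ell-llinf}, the proposition's intrinsic $L^\infty$ bound \eqref{vf-ell-Linf} has the wrong decay rate ($|x-x_\lambda|^{-1/2}$ rather than $|x-x_\lambda|^{-1}$), so I would instead upgrade the $L^2$ bound via a local Bernstein argument. On the annular shell $\{|x-x_\lambda|\sim r\}$ the $L^2$ estimate localizes to
\[
\|u_\lambda\|_{L^2(\{|x-x_\lambda|\sim r\})}\lesssim r^{-1}\lambda^{-s_1}\|u_\lambda\|_X,
\]
and provided $r\gtrsim \lambda^{-1}$ a smooth spatial cutoff of scale $r$ preserves the frequency localization at scale $\lambda$ up to acceptable errors, so Bernstein gives $\|u_\lambda\|_{L^\infty(\{|x-x_\lambda|\sim r\})}\lesssim \lambda^{1/2} r^{-1}\lambda^{-s_1}\|u_\lambda\|_X$. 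Setting $r=|x-x_\lambda|$ produces exactly \eqref{vf-ell-llinf}.

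The hypothesis $r\gtrsim \lambda^{-1}$ is automatic for $x$ outside the support of $\chi_\lambda$ in all three dyadic regimes of Section~\ref{s:dyadic}: in the generalized NLS and strong Klein-Gordon cases $|\tilde J_\lambda|\approx t\lambda a''(\lambda) \gg \lambda^{-1}$ directly, while in the weak Klein-Gordon case the threshold $\lambda_0$ defined by \eqref{lambda0} is tuned precisely so that $|\tilde J_\lambda|\geq \lambda^{-1}$ for every dispersive scale $\lambda\leq \lambda_0$, with the complementary region already absorbed into $\chi_{hi}$. The main technical obstacle is the tight commutator bookkeeping: both the energy conversion in part (a) and the $L^2$-to-$L^\infty$ upgrade in part (b) close only because the second inequality of \eqref{choose-s0}, $s_1\leq s_0+1$, absorbs, respectively, the $M$-term from Proposition~\ref{p:vf} and the $\lambda^{-1}$ error coming from $[L,P_\lambda]$; everything else is a bookkeeping exercise across the three dyadic regimes.
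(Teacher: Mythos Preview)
Your approach to part (a) and to the $L^2$ bound \eqref{vf-ell-ll2} in part (b) is exactly the paper's: dyadic localization, then Proposition~\ref{p:vf} respectively Proposition~\ref{p:vf-ell} with $R\approx\lambda^\sigma$, $M\approx\lambda^{-1}$, using $s_1\leq s_0+1$ to absorb the commutator $[L,P_\lambda]$.

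For the pointwise bound \eqref{vf-ell-llinf} your route differs. You localize to spatial shells $\{|x-x_\lambda|\sim r\}$, use \eqref{vf-ell-ll2} to get $\|u_\lambda\|_{L^2(\text{shell})}\lesssim r^{-1}\lambda^{-s_1}\|u_\lambda\|_X$, then Bernstein after multiplying by a cutoff smooth on scale $r$. The paper instead works with the full function $(1-\chi_\lambda)(x-x_\lambda)u_\lambda$, splits it as $\tilde P_\lambda(\cdots)+(1-\tilde P_\lambda)(\cdots)$, applies Bernstein to the first piece, and handles the second by observing that the multiplier $g=(1-\chi_\lambda)(x-x_\lambda)$ must itself be at frequency $\gtrsim\lambda$, whence $\|g_{\gtrsim\lambda}\|_{L^\infty}\lesssim\lambda^{-1}\|\partial_x g\|_{L^\infty}\lesssim\lambda^{-1}$. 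Both arguments are correct. The paper's version has the minor advantage that it does not require the uncertainty-principle check $r\gtrsim\lambda^{-1}$ (equivalently $t\lambda^2 a''(\lambda)\gtrsim 1$), so it applies uniformly in $\lambda$ without separately invoking the $\lambda_0$ threshold in the weak Klein-Gordon regime; your shell argument is more elementary but needs that verification, which you carried out correctly.
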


\begin{proof}
Using a dyadic decomposition in frequency as described earlier in Section~\ref{s:dyadic},
\[
u = \sum_\lambda u_\lambda,
\]
we first observe that we can localize the $X$ bound 
and conclude that 
\[
\| u_\lambda \|_{X} \lesssim \| u\|_{X} .
\]
This is where the condition $s_1 \leq s_0+1$ is used.

The advantage is that for each $\lambda$, the size of 
$a''$ is essentially constant, and we may harmlessly extend $a$ to have uniform 
convexity outside $I_\lambda$. 
Hence we will be able to apply directly the results in 
Propositions~\ref{p:vf}, \ref{p:vf-ell}, with the choice of parameters 
\[
R = a''(\lambda) \approx \lambda^\sigma, \qquad M \approx \lambda^{-1}.
\]

a) Since we have $s_0+s_1 \geq -\sigma+\delta$ as well as $s_1 \leq s_0+1$, a direct application of 
Proposition~\ref{p:vf} yields 
\[
\|u_\lambda\|_{L^\infty}^2 \lesssim \frac{1}{t a''(\lambda)}(\lambda^{-{s_0-s_1}} + \lambda^{-2s_0-1}) \|u_\lambda\|_{X}^2 \lesssim   \frac{1}{t \lambda^\delta}\|u_\lambda\|_{X}^2,
\]
which immediately yields the bound \eqref{vf-point-lambda}.

We further remark that, 
in the context of the classification of cases in the previous subsection, in case (c),
which is the Klein-Gordon case, it is also interesting to distinguish the low frequencies
from the high frequencies, and replace the full dyadic decomposition of $u$ by 
\[
u = \sum_{|\lambda| < \lambda_0} u_\lambda + u_{hi},
\]
where the threshold $\lambda_0$ is as in \eqref{lambda0}.
While the above argument applies in all cases, for high frequencies
the desired bound also follows directly from Bernstein's inequality, completely neglecting the $Lu_\lambda$ bound, 
\[
\| u_{hi}\|_{L^\infty} \lesssim \sum_{\lambda > \lambda_0} \lambda^{1+\delta} (a''(\lambda))^\frac12 \|u\|_{H^{s_0}}
\lesssim \lambda_0^{-\delta} t^{-\frac12} \|u\|_{X}.
\]
This is consistent with the fact that in this regime our evolution is at leading order a transport equation, with negligible dispersion. Precisely, in this frequency range we can 
perturbatively replace the symbol $a$  with its affine asymptotes
as $\xi$ approaches $\pm \infty$.

\bigskip
b) Here we similarly apply Proposition~\ref{p:vf-ell}. 
The bound \eqref{vf-ell-ll2}
is obtained directly from \eqref{vf-ell-L2}. 
For \eqref{vf-ell-llinf} we first use a multiplier
$\tilde P_\lambda$ with slightly larger support to 
decompose
\[
(1-\chi_\lambda) (x-x_\lambda)  u_\lambda =
\tilde P_\lambda (1-\chi_\lambda) (x-x_\lambda)  u_\lambda + (1-\tilde P_\lambda)(1-\chi_\lambda) (x-x_\lambda)  u_\lambda
\]
The first term is localized at frequency $\lambda$,
so we can estimate it using Bernstein's inequality 
and \eqref{vf-ell-ll2}, 
\[
\| \tilde P_\lambda (1-\chi_\lambda) (x-x_\lambda)  u_\lambda \|_{L^\infty} \lesssim 
\lambda^{\frac12} \| (1-\chi_\lambda) (x-x_\lambda)  u_\lambda \|_{L^2}
\lesssim  \lambda^{-s_1+\frac12}    \| u_\lambda\|_X.
\]
In the second term the coefficient $(1-\chi_\lambda) (x-x_\lambda)$ must be localized at frequency at least $\lambda$,
\[
 (1-\tilde P_\lambda)(1-\chi_\lambda) (x-x_\lambda)  u_\lambda = 
  (1-\tilde P_\lambda) g_{\gtrsim \lambda}   u_\lambda, \qquad g = (1-\chi_\lambda) (x-x_\lambda).
\]
Then we estimate 
\[
\begin{aligned}
\| (1-\tilde P_\lambda)(1-\chi_\lambda) (x-x_\lambda)  u_\lambda \|_{L^\infty}
\lesssim & \  \| g_{\gtrsim \lambda}   u_\lambda\|_{L^\infty}
\\
\lesssim  & \ \lambda^\frac12 \| g_{\gtrsim \lambda}   u_\lambda\|_{L^2} + \lambda^{-\frac12}
\| \partial_x [g_{\gtrsim \lambda}   u_\lambda]\|_{L^2}
\\
\lesssim & \ (\lambda^\frac12 \| g_{\gtrsim \lambda}\|_{L^\infty}  + \lambda^{-\frac12}
\| \partial_x g_{\gtrsim \lambda}\|_{L^\infty} ) \|u_\lambda\|_{L^2}
\\
\lesssim & \ \lambda^{-s_0-\frac12} \| \partial_x g\|_{L^\infty} \|u_\lambda\|_{X}
\end{aligned}
\]
which suffices since $\| \partial_x g\|_{L^\infty}
\lesssim 1$ and $s_0 \geq s_1-1$.

\subsection{ Bounds for $Q$ and the energy estimate for $u$}
Here the first goal is to prove the following energy bound for the function $u$:

\begin{proposition}\label{p:ee-u}
Assume that $u$ is a solution  to \eqref{main}, 
under the same assumptions as in Theorem~\ref{t:no-comp}. Then we have the bound
\begin{equation}
\frac{d}{dt} \| u(t) \|_{H^{s_0}}^2 \lesssim \|\la D \ra^{\frac{\delta}8} u\|_{L^\infty}^2 
\|u\|_{X}^2.
\end{equation}
\end{proposition}
We note that in many problems this bound is independent on the $X$ norm, 
and has instead  the form
\begin{equation}
\frac{d}{dt} \| u(t) \|_{H^{s_0}}^2 \lesssim \|\la D \ra^{\frac{\delta}8} u\|_{L^\infty}^2 
\|u\|_{H^{s_0}}^2.
\end{equation}
This is the case if $s_0 \geq 0$ (see the proof below) but also if $Q$ has additional structure.

\begin{proof}
Differentiating in time and using the equation \eqref{main}, this reduces to the weighted inequality 
\begin{equation} \label{q1}
\|Q(u,\bar u,u)\|_{H^{s_0}} 
\lesssim \|\la D \ra^{\frac{\delta}8} u\|_{L^\infty}^2 
\|u\|_{X}.
\end{equation}

Here we distinguish two cases depending on the sign of $s_0$:
\medskip

i) $s_0 \geq 0$. Here we have the simpler bound
\begin{equation} \label{q1-easy}
\|Q(u,\bar u,u)\|_{H^{s_0}} 
\lesssim \|\la D \ra^{\frac{\delta}8} u\|_{L^\infty}^2 \| u\|_{H^{s_0}},
\end{equation}
which does not involve any control for $Lu$.
Since  $Q$ satisfies the 
symbol bounds \eqref{q-symbol}, this easily follows 
by a standard Littlewood-Paley decomposition with respect to all 
inputs and the output. The $H^{s_0}$ factor on the right is 
always chosen to correspond to the highest frequency, and the $\delta$ exponent readily ensures dyadic summation. More precisely, writing
\[
 Q(u,\bar u,u) = 
 \sum_{\lambda_1,\lambda_2,\lambda_3,\lambda_4} P_{\lambda_4} Q(u_{\lambda_1},\bar u_{\lambda_2}, u_{\lambda_3}),
 \]
and relabeling increasing order $\{ \lambda_1,\lambda_2, \lambda_3\} = \{\lambda_{lo}, \lambda_{mid}, \lambda_{hi}\}$
we must have either 
\begin{enumerate}[label=(\alph*)]
\item $\lambda_4 \approx \lambda_{hi}$, or

\item  $\lambda_4 < \lambda_{mid} \approx \lambda_{hi}$,
\end{enumerate}
and correspondingly decompose $Q=Q_a + Q_b$.

For $Q_a$ we may use orthogonality to estimate
\[
\| Q_a(u,\bar u,u)\|_{H^{s_0}}^2 \lesssim  \sum_{\lambda_{hi}}
\left(\sum_{\lambda_{lo},\lambda_{mid}} \| u_{\lambda_{lo}}\|_{L^\infty} \| u_{\lambda_{mid}} \|_{L^\infty}\right)^2 \| u _{\lambda_{hi}}\|^2_{H^{s_0}},
\]
where the inner sum is estimated by the $L^\infty$ norm in 
\eqref{q1-easy}.

For $Q_b$ on the other hand we neglect orthogonality
and estimate directly 
\[
\| Q_b(u,\bar u,u)\|_{H^{s_0}}^2 \lesssim  \sum_{\lambda_{lo}} \sum_{\lambda_{mid} \approx \lambda_{hi}}
\| u_{\lambda_{lo}}\|_{L^\infty} \| u_{\lambda_{mid}} \|_{L^\infty} \| u _{\lambda_{hi}}\|_{H^{s_0}},
\]
where the summation with respect to the two indices 
is again guaranteed  by the $L^\infty$ norm in 
\eqref{q1-easy}.

\medskip

ii) $s_0 < 0$, which is needed only in the generalized NLS case $\sigma \geq -1$. In this case, the bound \eqref{q1-easy}
applies only to the portion of $Q$ where at least one of the three 
input frequencies, which we denote by $\lambda_1$, $\lambda_2$ and $\lambda_3$, is at most comparable to the output frequency $\lambda_4$.

Hence, from here on we assume that $\lambda_4 \ll  \lambda_j$, $j = 1,2,3$. 
This guarantees that $\lambda_1$, $\lambda_2$ and $\lambda_3$ should all be distinct, and also the largest two should be comparable. Under these 
assumptions, it remains to prove the estimate
\begin{equation}
\begin{aligned}
\lambda_4^{s_0} \|P_{\lambda_4} Q(u_{\lambda_1},\bar u_{\lambda_2},u_{\lambda_3})\|_{L^2} 
\lesssim & \ 
\| u_{ \lambda_1} \|_{L^\infty} \| u_{ \lambda_2} \|_{L^\infty} \| u_{\lambda_3}\|_{X}+
 \| u_{ \lambda_2} \|_{L^\infty} \| u_{ \lambda_3} \|_{L^\infty} \| u_{\lambda_1}\|_{X}
 \\ 
&  + \| u_{ \lambda_3} \|_{L^\infty} \| u_{ \lambda_1} \|_{L^\infty}
\| u_{\lambda_2}\|_{X}.
\end{aligned}
\end{equation}
Here we note that, since the two highest frequencies are comparable,
the dyadic summation with respect to the four frequencies is straightforward using the $\delta$ factor, and \eqref{q1} follows.

To prove the last bound,  we retain the restrictions on $\lambda_1$, $\lambda_2$
and $\lambda_3$, but then harmlessly drop the projection $P_{\lambda_4}$. Then we can use separation of variables and reduce the problem to the product case, where 
it suffices to show that
\begin{equation}
\begin{aligned}
 \|u_{\lambda_1}\bar u_{\lambda_2} u_{\lambda_3}\|_{L^2} 
\lesssim & \ 
\| u_{ \lambda_1} \|_{L^\infty} \| u_{ \lambda_2} \|_{L^\infty} \| u_{\lambda_3}\|_{X}+
 \| u_{ \lambda_2} \|_{L^\infty} \| u_{ \lambda_3} \|_{L^\infty} \| u_{\lambda_1}\|_{X}
 \\ 
&  + \| u_{ \lambda_3} \|_{L^\infty} \| u_{ \lambda_1} \|_{L^\infty}
\| u_{\lambda_2}\|_{X}.
\end{aligned}
\end{equation}

Next, we separate the product with respect to dyadic velocity ranges. Since the $\lambda$'s cannot be all equal, it suffices 
to estimate the expression
\[
I= \|(1-\chi_{\lambda_1}) u_{\lambda_1}\bar u_{\lambda_2}u_{\lambda_3}\|_{L^2} .
\]
By Proposition~\ref{p:vf-gen} we have
\[
\begin{aligned}
I \lesssim &\ \|(1-\chi_{\lambda_1)}u_{\lambda_1}\|_{L^2}\| u_{\lambda_2}\|_{L^\infty} \|u_{\lambda_3}\|_{L^\infty}
\\
\lesssim & \ \sup_{\lambda \neq \lambda_1} \frac{\lambda_1^{-s_1}}{t |a'(\lambda_1) - a'(\lambda)|}
\|u_{\lambda_1}\|_{X} \|u_{\lambda_2}\|_{L^\infty} \|u_{\lambda_3}\|_{L^\infty}
\\
\lesssim & \ \frac{\lambda_1^{-s_1}}{t  \lambda_1 |a''(\lambda_1)|}
\|u_{\lambda_1}\|_{X} \|u_{\lambda_2}\|_{L^\infty} \|u_{\lambda_3}\|_{L^\infty},
\end{aligned}
\]
where the $\lambda$ dependent weight is maximized when $\lambda$ is near $\lambda_1$. Then it suffices to check that
\[
\frac{\lambda_1^{-s_1}}{t \lambda_1 |a''(\lambda_1)|} \lesssim 1.
\]
Given the choice of $s_1$ and that $\sigma \geq -1$, this is true with a substantial gain. Thus the proof of the Proposition is complete.
\end{proof}

A second objective here is to show that, in the context of the balanced/unbalanced
decomposition for the cubic nolinearity $Q$, we have a better bound for the unbalanced part. 
This bound will play a role in our wave packet testing in the next subsection,
precisely in the estimate for the error  in the asymptotic equation.

\begin{proposition}
The unbalanced part $Q^{unbal}$ of $Q$ satisfies the better $L^\infty$ bound
\begin{equation}\label{Q-L2-from-Lh}
\|\chi_\lambda P_\lambda Q^{unbal}(u,\bar u ,u) \|_{L^\infty} \lesssim 
\frac{\lambda^{-\frac{\delta}{4}}}{t^{\frac32+\frac{\delta}4}}  \|u\|_{X}^3,
\end{equation}  
provided that either $ \sigma \geq -2$
or  \{$\sigma < -2$ and $t \lambda^{\sigma+2}
\geq 1$\}.
% respectively
% \begin{equation}\label{Q-L2-from-Ll}
% \|\chi_\lambda P_\lambda Q^{unbal}(u,\bar u ,u) \|_{L^\infty} \lesssim 
% \frac{\lambda^{-\frac{\delta}4}}{t^2 \lambda (a''(\lambda))^\frac12}  \|u\|_{X}^3, \qquad \sigma  \leq -2.
% \end{equation} 
\end{proposition}

We remark that, depending on $\sigma$ and on the balance of the three frequencies,
in some of the cases one can get a better asymptotic equation error bound by 
using $L^2$ estimates for $Q^{unbal}$. We do not pursue this here because it is not needed.

\begin{remark}
This bound is needed 
in order to be able to control the contribution of $Q^{unbal}$ to the error in the wave packet testing. Precisely, we will need to be able to verify that 
\[
\la Q^{unbal}(u,\bar u, u), \uu_v \ra \lesssim t^{-1-\delta}
\]
for $v \in J_\lambda$, and $\lambda < \lambda_0$ in the case $\sigma < -2$.
This requires the $L^\infty$ bound
\[
\| \chi_\lambda  P_\lambda Q \|_{L^\infty} \lesssim t^{-\frac32-}.
\]

\end{remark}

\begin{proof}
We first simply consider a triple product $u_1 \bar u_2 u_3$ where $\lambda_1,\lambda_2$ and $\lambda_3$ are not all equal, and estimate it within a dyadic velocity region $A_\lambda$. For that
we apply  \eqref{vf-ell-ll2} and \eqref{vf-point-lambda} for a $\lambda_j$, say $\lambda_3$, which is away from $\lambda$.
This yields
\begin{equation}\label{Q-2} 
\|\chi_\lambda u_1 \bar u_2 u_3 \|_{L^2} \lesssim 
\frac{\lambda_1^{-\frac{\delta}{2}} \lambda_2^{-\frac{\delta}{2}}\lambda_3^{-s_1}}{t^2 |a'(\lambda_3)- a'(\lambda)|}  \|u\|_{X}^3 ,
\end{equation}
respectively
\begin{equation}\label{Q-inf}
\|\chi_\lambda u_1 \bar u_2 u_3 \|_{L^\infty} \lesssim 
\frac{\lambda_1^{-\frac{\delta}{2}} \lambda_2^{-\frac{\delta}{2}} \lambda_3^{-s_1+\frac12}}{t^2 |a'(\lambda_3)- a'(\lambda)|}  \|u\|_{X}^3 .
\end{equation}
We complement these with the trivial bound
\begin{equation}\label{Q-inf+}
\|\chi_\lambda u_1 \bar u_2 u_3 \|_{L^\infty} \lesssim 
\frac{\lambda_1^{-\frac{\delta}{2}} \lambda_2^{-\frac{\delta}{2}} \lambda_3^{-s_0+\frac12}}{t}  \|u\|_{X}^3 .
\end{equation}
To use these estimates we consider two scenarios:
\smallskip

(i) $\lambda_1=\lambda$ and $\lambda_3 < \lambda_2 \ll \lambda$. Then 
we can separate variables to discard $P_\lambda$, and apply the above 
bounds \eqref{Q-inf} and \eqref{Q-inf+}. 
  Now we examine the coefficient 
 in \eqref{Q-inf}   as a function of $\lambda_3$.  For $\sigma \geq -1$ we get 
\[
\|\chi_\lambda u_1 \bar u_2 u_3 \|_{L^\infty} \lesssim \lambda_1^{-\frac{\delta}{2}} \lambda_2^{-\frac{\delta}{2}} \lambda_3^{-s_1+\frac12} \lambda^{-\sigma -1} t^{-2}  \|u\|_{X}^3,
\]
which suffices. For $\sigma < -1$ we get 
\[
\|\chi_\lambda u_1 \bar u_2 u_3 \|_{L^\infty} \lesssim \lambda_1^{-\frac{\delta}{2}} \lambda_2^{-\frac{\delta}{2}} \lambda_3^{-s_1-\frac12-\sigma } t^{-2}  \|u\|_{X}^3.
\]
This still suffices directly in the range 
$-\frac32 \leq \sigma \leq -1$, 
and after interpolation with \eqref{Q-inf+}
in the remaining range $ \sigma < -\frac32$.
In all cases  the summation in $\lambda_3$ and $\lambda_2$ is straightforward.

\smallskip
(ii) In the remaining case we must have at least two comparable high frequencies, say
$\lambda_2, \lambda_3 \gtrsim \lambda$, one of which, say $\lambda_3$, is separated from $\lambda$.  Then we replace the cutoff $\chi_\lambda$ by 
one with a double support, call it  $\tchi_\lambda$, which equals one on a 
comparably sized neighbourhood of the support of $\chi_\lambda$. Precisely, we write
\[
\chi_\lambda P_\lambda = \chi_\lambda P_\lambda \tchi_\lambda +  \chi_\lambda P_\lambda (1-\tchi_\lambda).
\]
The second term is easily taken care of by noting that 
\[
\| \chi_\lambda P_\lambda (1-\tchi_\lambda) \|_{L^\infty \to L^\infty}
\lesssim \frac{1}{(t a''(\lambda) \lambda^2)^N}
\]
combined with the pointwise bound for each of the factors.

For the first term we apply 
\eqref{Q-2}, noting that the coefficient is nonincreasing in $\lambda_3 \gtrsim \lambda$. For $\sigma \geq -1$ we obtain
\begin{equation*}\label{Q-2-bis}
\|\tchi_\lambda u_1 \bar u_2 u_3 \|_{L^2} \lesssim 
\frac{\lambda_3^{-s_1}}{t^2 \lambda_3^{1+\sigma}}  \|u\|_{X}^3 ,
\end{equation*}
and conclude using Bernstein's inequality at frequency  $\lambda$.
For $\sigma < -1$ we obtain
\begin{equation*}
\|\tchi_\lambda u_1 \bar u_2 u_3 \|_{L^2} \lesssim 
\frac{\lambda_3^{-s_1}}{t^2 \lambda^{1+\sigma}}  \|u\|_{X}^3 ,
\end{equation*}
Then we use Bernstein's inequality at frequency  $\lambda$ and interpolate with 
\eqref{Q-inf+} as in case (i).

\end{proof}

\subsection{ The energy estimate for $Lu$}
Here the objective is to prove the energy estimate for $Lu$.
As in the model case, this will be achieved via 
a cubic correction $C$ so that we can obtain a favorable estimate
for the nonlinear expression
\[
L^{NL} u = Lu+ t C(u,\bar u,u).
\]
Precisely, we will prove the following

\begin{proposition}\label{p:ee-Lu}
There exists a trilinear, translation invariant  correction $C$ with the following properties

(i) Uniform bound for $C$:
\begin{equation}\label{L2-C}
\|C(u,\bar u,u) \|_{H^{s_1}} \lesssim   \|\la D \ra^{\frac{\delta}8} u\|_{L^\infty}^2  \| u\|_{H^{s_0}} .
\end{equation}

(ii) Energy bound for $L^{NL} u$,
\begin{equation}\label{L2-LNL}
\frac{d}{dt} \|L^{NL} u\|_{H^{s_1}}^2
\lesssim \| u\|_{X}^2  \|\la D \ra^{\frac{\delta}8} u\|_{L^\infty}^2 
+ t^{-\frac12-\delta} \| u\|_{X}^3  \|\la D \ra^{\frac{\delta}8} u\|_{L^\infty} .
\end{equation}
\end{proposition}
One immediate consequence of \eqref{L2-C} combined with the 
bootstrap assumption \eqref{disp-decay+boot} is the 
norm equivalence
\begin{equation}\label{same-X}
\| u\|_X^2 \approx \| u\|_{H^{s_0}}^2 + \| L^{NL} u\|_{H^{s_1}}^2.
\end{equation}
Using this property one easily sees that, combining the energy estimates for $u$ and $L^{NL} u$ in Propositions~\ref{p:ee-u}, \ref{p:ee-Lu}, and using the bootstrap assumption \eqref{disp-decay+boot}, we obtain 
by Gronwall's inequality the energy estimate in \eqref{energy+}.

\begin{proof}
For the expression $w:=L^{NL} u$ we have an equation of the form 
\[
(i\partial_t - A) w = LQ(u,\bar u,u) +t R_3(u,\bar u,u)  + i C(u,\bar u,u)
+ t R_5(u,\bar u,u,\bar u,u),
\]
where $R_3$ has symbol
\[
r_3(\xi_1,\xi_2,\xi_3) = c(\xi_1,\xi_2,\xi_3)( a(\xi_1)-a(\xi_2) + a(\xi_3)
- a(\xi_1-\xi_2+\xi_3)),
\]
and $R_5$ is simply the quintilinear form arising from the time derivative of $C$.

The objective is then to choose the correction $C$ so that \eqref{L2-C} holds, and 
we can estimate the source terms in $H^{s_1}$,
\begin{equation}\label{c2}
\|LQ(u,\bar u ,u)+ tR_3(u,\bar u,u) \|_{H^{s_1}} \lesssim \|u\|_{X} \|\la D \ra^{\frac{\delta}8} u\|_{L^\infty}^2  ,
\end{equation} 
respectively
\begin{equation}\label{c3}
\| R_5(u,\bar u,u,\bar u,u) \|_{H^{s_1}} \lesssim \|u\|_{H^{s_0}} \|\la D \ra^{\frac{\delta}8} u\|_{L^\infty}^4 .
\end{equation}

Here naively one may hope to use the same correction $C$ 
as in the compact case, so that we have
\[
LQ(u,\bar u,u) +t R_3(u,\bar u,u)= Q(Lu,\bar u,u) - 
Q(u,\overline{Lu},u) + Q(u,\bar u,L u).
\]
However, as it turns out, there are some difficulties which such a direct 
approach. Precisely,  considering a full dyadic decomposition for $Q$,
there are two interesting scenarios to consider:

\medskip

a) Balanced interactions, where the three input frequencies and the output frequency are all comparable, say to a fixed frequency $\lambda$. Then the symbol $c$ has 
similar support, symbol type regularity and size 
\[
|c(\xi_1,\xi_2,\xi_3)| \lesssim \lambda^{-1} |q(\xi_1,\xi_2,\xi_3)| \lesssim |\lambda|^{-1}.
\]
In this case the bounds \eqref{L2-C} and \eqref{c2} are straightforward, nothing but a rescaled version of the corresponding bounds in the compact case.
We still  need to prove \eqref{c3}, which contains some unbalanced interactions, but this is not so difficult.

\medskip

b) Unbalanced interactions, where, instead, the use of the correction $C$ 
would cause trouble:
\begin{itemize}
    \item The expression of $C$ would be more complicated, which
causes difficulties with \eqref{L2-C} and \eqref{c3}.
\item the bound \eqref{c2} is unbalanced, which causes difficulties unless 
$s_1=0$ or we have a favourable frequency balance.
\end{itemize}

However, the redeeming feature in this case is that, in each dyadic velocity range, at least one of the three inputs must correspond to a different range
of velocities, so the corresponding frequency localized operator $L$ is elliptic there. It follows that  the expression $LQ(u,\bar u,u)$ no longer needs to be corrected, and instead should be estimated directly, in an elliptic fashion.

To implement the heuristic strategy described above, we 
decompose $Q$ into a balanced and an unbalanced component,
\[
Q(u,\bar u,u) = Q^{bal}(u,\bar u, u) + Q^{unbal}(u,\bar u ,u),
\]
where at the symbol level we set
\[
\qq^{bal}(\xi_1,\xi_2,\xi_3,\xi_4) = \chi_{\la \xi_1 \ra \approx \la \xi_2 \ra
\approx \la \xi_3 \ra   \approx \la \xi_4 \ra   }\qq(\xi_1,\xi_2,\xi_3,\xi_4).
\]
Then we choose the normal form correction $C=C^{bal}$ to account for the balanced term, where the corresponding errors are estimated as discussed above. On the other hand, the unbalanced term we simply treat perturbatively, without any correction. 

\bigskip

\textbf{ A. The balanced term.}
To account for the balanced term, we follow the compact case and set
\begin{equation}
    c^{bal}(\xi_1,\xi_2,\xi_3) = q^{bal}(\xi_1,\xi_2,\xi_3) \frac{a_\xi(\xi_1) - a_\xi(\xi_2) + a_\xi(\xi_3)- a_\xi(\xi)}{ a(\xi_1)-a(\xi_2) + a(\xi_3) - a(\xi)},
\end{equation}
so that we have the algebraic relation
\[
L Q^{bal}(u,\bar u,u) - t  R_3^{bal}(u,\bar u,u) =
Q^{bal}(Lu,\bar u,u) - C^{bal}(u, \overline{Lu},u) + C^{bal}(u,\bar u, Lu)
+  D(u,\bar u, u),
\]
with 
\[
d(\xi_1,\xi_2,\xi_3) = i ( \partial_{\xi_1} -\partial_{\xi_2} + \partial_{\xi_3})
q^{bal}(\xi_1,\xi_2,\xi_3).
\]

Then we have 
\begin{lemma}
The above correction $C^{bal}$ satisfies the estimates \eqref{L2-C}, \eqref{c2} and \eqref{c3}.
\end{lemma}

\begin{proof}
As mentioned earlier, the proof of \eqref{L2-C}, \eqref{c2} is simply a rescaled
version of the similar argument in Section~\ref{s:energy}. As such, it is omitted
and left as an exercise for the reader.

The bound \eqref{c3}, on the other hand, involves also some unbalanced interactions and deserves some separate attention. Localizing in frequency and separating variables, we split 
\[
C^{bal} = \sum_\lambda C^{bal}_\lambda, 
\]
where we 
can assume that the frequency $\lambda$ portion $C^{bal}_\lambda$ of $C^{bal}$ has the form
\[
C^{bal}_\lambda (u,\bar u,\bar u) = \lambda^{-1} u_\lambda \bar u_\lambda u_\lambda .
\]
Then the corresponding component of $R_5$ has terms of the form
\[
R_{5,\lambda} (u,\bar u,u,\bar u,u)= \lambda^{-1} u_\lambda \bar u_\lambda P_\lambda Q(u,\bar u,u) .
\]
Hence, we can bound it by
\[
\|R_{5,\lambda}(u,\bar u,u,\bar u,u) \|_{H^{s_1}} \lesssim 
\lambda^{s_1-1} \|u_\lambda\|_{L^2} \| u_\lambda \|_{L^\infty} \| Q(u,u,u)\|_{L^\infty} \lesssim  \|u_\lambda\|_{H^{s_0}} 
\|\la D \ra^{\frac{\delta}8} u\|_{L^\infty}^4. 
\]

\end{proof}

\bigskip

\textbf{B. The unbalanced term.} This corresponds to the unbalanced  component $Q^{unbal}$ of $Q$.
Here we set our correction to $0$, so that $R_3$ and $R_5$ also vanish.
Then it remains to prove that we have the following result:
\begin{lemma}\label{l:LQunbal}
Assume that $s_0, s_1$ are chosen as in Theorem~\ref{t:no-comp}. Then 
for $\delta > 0$ we have the following $L^2$ type bound:
\begin{equation}\label{c2+}
\|L Q^{unbal}(u,\bar u ,u) \|_{H^{s_1}} \lesssim \|u\|_{X} 
\|\la D \ra^{\frac{\delta}8} u\|_{L^\infty}^2 + t^{-\frac12 -\delta} \|u\|_{X}^2 
\|\la D \ra^{\frac{\delta}8} u\|_{L^\infty}.
\end{equation}   
\end{lemma}
Here the advantage is that we can choose which inner $u$ we place the $L$
on. Using the bound \eqref{c2+} in the lemma, the proof of the $L^2$ energy bound \eqref{L2-LNL} for $L^{NL} u$ is concluded.

\begin{proof}
 We localize the cubic expression $Q$ in frequency to dyadic regions 
 associated with input frequencies $\lambda_1$, $\lambda_2$, $\lambda_3$ and output frequency $\lambda_4$. Since the choice of the small parameter $\delta$ is flexible,
 the dyadic summation with respect to $\lambda_1$, $\lambda_2$, $\lambda_3$ and $\lambda_4$ is straightforward. For this reason, it suffices to prove the lemma
 in the case when  $\lambda_1$, $\lambda_2$, $\lambda_3$ and $\lambda_4$ are fixed.
To streamline notations, we will denote $u_j:= P_{\lambda_j} u$ for $j = 1,2,3$.

In each such region, the nonlinear expression $Q$ is essentially like a product, which then gets localized to the output frequency $\lambda_4$. Using separation of variables,
we can reduce the problem to the case 
\[
Q^{unbal}(u_1,\bar u_2,u_3) = P_{\lambda_4}(u_1 \bar u_2 u_3),
\]
where $\lambda_1,\lambda_2,\lambda_3$ are not all equal. 
This key property implies that, in a given a dyadic velocity range associated to a frequency $\lambda$, we must have at least one of the three $L$'s act as an elliptic operator; our estimate is primarily based on this principle.
We will further separate the problem into two cases, depending on the relative size of the three interacting frequencies $\lambda_1$, $\lambda_2$ and $\lambda_3$.
\bigskip

a)  The $llh$ case, where 
\[
\lambda_1 \leq \lambda_2 \ll  \lambda_3 ,
\]
or the symmetric case where $\lambda_1$ and $\lambda_3$
are interchanged. In this case we must have $\lambda_4 \approx \lambda_3$, and we can also use separation of variables to discard the $P_{\lambda_4}$ projector. Further, it will be convenient 
to commute $L$ inside, and write
\[
L (u_1 \bar u_2 u_3) = u_1 \bar u_2 L u_3 +   t R(u_1,\bar u_2,u_3),
\]
where the symbol of $R$ is 
\[
r(\xi_1,\xi_2,\xi_3) = a'(\xi_1-\xi_2+\xi_3) - a'(\xi_3).
\]
This is a smooth symbol in all three variables on the corresponding dyadic scales, and has size
\[
|r(\xi_1,\xi_2,\xi_3)| \lesssim \lambda_2 a''(\lambda_3).
\]
The first term is estimated in a straightforward fashion,
\[
\| u_1 \bar u_2 L u_3\|_{H^{s_1}} \lesssim \|u_1\|_{L^\infty}\|u_2\|_{L^\infty} \|Lu_3\|_{H^{s_1}}.
\]
For the second term we can use separation of variables to drop again the multipliers, and we are left with the task
of estimating the expression
\[
t \lambda_3^{s_1}\lambda_2 a''(\lambda_3)
\| u_1 \bar u_2 u_3\|_{L^2}.
\]
Finally, we use a spatial angular localization to separate into directions associated to a dyadic 
frequency $\lambda$. In this case we can consider a very simple separation, depending on whether the direction $\lambda$ matches $\lambda_3$ or not, writing
\[
u_1 \bar u_2 u_3 = \chi_{\lambda_3} ( u_1 \bar u_2 u_3)+ (1-\chi_{\lambda_3}) ( u_1 \bar u_2 u_3).
\]
For the first term we use \eqref{vf-ell-ll2} for $u_2$ 
in order to write
\[
\begin{aligned}
t \lambda_3^{s_1}\lambda_2 a''(\lambda_3) 
\| \chi_{\lambda_3} ( u_1 \bar u_2 u_3)\|_{L^2}
\lesssim & \ 
t \lambda_3^{s_1}\lambda_2 a''(\lambda_3) 
\| \chi_{\lambda_3}  u_2\|_{L^2} \| u_1\|_{L^\infty} \| u_3\|_{L^\infty}
\\
\lesssim & \ 
t \lambda_3^{s_1}\lambda_2 a''(\lambda_3) \frac{1}{ \lambda_2^{s_1} t |a'(\lambda_2) - a'(\lambda_3)|}
\| u_2\|_{X} \| u_1\|_{L^\infty} \| u_3\|_{L^\infty}.
\end{aligned}
\]
Here the coefficient on the right is nondecreasing
in $\lambda_2$ in all cases (this corresponds to the restriction $s_1 \leq 1$ if $\sigma \geq -1$, respectively $s_1 \leq -\sigma$ if $\sigma < -1$, which are satisfied for our choice
of exponents) and equals $1$ if $ \lambda_2 = \lambda_3$.

For the second term we instead use \eqref{vf-ell-ll2} for $u_2$ 
in order to write
\[
\begin{aligned}
t \lambda_3^{s_1}\lambda_2 a''(\lambda_3) 
\| (1-\chi_{\lambda_3}) ( u_1 \bar u_2 u_3)\|_{L^2}
\lesssim & \ 
t \lambda_3^{s_1}\lambda_2 a''(\lambda_3) 
 \| u_1\|_{L^\infty} \| u_3\|_{L^\infty}
 \| (1-\chi_{\lambda_3})  u_2\|_{L^2}
\\
\lesssim & \ 
t \lambda_3^{s_1}\lambda_2 a''(\lambda_3) \frac{1}{ \lambda_3^{s_1} t \lambda_3 |a''(\lambda_3)|}
\| u_2\|_{X} \| u_1\|_{L^\infty} \| u_3\|_{L^\infty}
\\
\lesssim & \ 
\| u_2\|_{X} \| u_1\|_{L^\infty} \| u_3\|_{L^\infty}.
\end{aligned}
\]
This concludes the proof of \eqref{c2+} in this case.

\bigskip

 \bigskip

b) The $lhh$ case,
\[
\lambda_1 < \lambda_2 \approx  \lambda_3, \qquad \lambda_4 \lesssim \lambda_2.
\]
 or permutations thereof.  Here we have many subcases to consider. We first reduce their number by peeling off some of the easier ones.

A first argument we can apply is to simply write 
\begin{equation}\label{LP3}
L P_{\lambda_4} (u_1 \bar u_2 u_3) = P_{\lambda_4} 
(L u_1 \bar u_2  u_3) + [x,
P_{\lambda_4}] (u_1 \bar u_2 u_3) + t R(u_1,\bar u_2,u_3) ,
\end{equation}
where the commutator term is essentially of the form $ \lambda_4^{-1}
P_{\lambda_4} (u_1 u_2 u_3)$ and the remainder $R$ arises from switching
the argument of $L$, and has symbol 
\[
r(\xi_1,\xi_2,\xi_3) = a'(\xi_1)-a'(\xi_1-\xi_2+\xi_3).
\]
This is a smooth symbol on the four associated dyadic scales, and of size
\[
|r(\xi_1,\xi_2,\xi_3)| \lesssim  |a'(\lambda_1)-a'(\lambda_4)|.
\]

Here we can estimate the first term in \eqref{LP3} in $H^{s_1}$
by 
\[
\| P_{\lambda_4}(L u_1 \bar u_2  u_3)\|_{H^{s_1}}
\lesssim \|L u_1 \|_{H^{s_1}} \|u_2\|_{L^\infty} \|u_3\|_{L^\infty},
\]
provided that  
\begin{equation}
\text{either} \quad \lambda_4 \lesssim \lambda_1 \quad \text{or} \quad \sigma \geq 1,    
\end{equation} 
where the second condition ensures that $s_1=0$.

The second term in \eqref{LP3} is estimated by 
\[
\lambda_4^{-1}\| P_{\lambda_4}( u_1 \bar u_2 u_3)\|_{H^{s_1}}
\lesssim \| u_1 \bar u_2  u_3\|_{H^{s_0}},
\]
after which we can reuse the bounds in Proposition~\ref{p:ee-u}.

Finally, for the last term we separate variables,
and it remains to estimate
\[
\lambda_4^{s_1} t |a'(\lambda_1)-a'(\lambda_4)|
\| u_1 \bar u_2 u_3\|_{L^2}.
\]
We split the triple product with respect to angles,
\[
u_1 \bar u_2 u_3 = \chi_{\lambda_3}u_1 \bar u_2 u_3
+ (1-\chi_{\lambda_3})u_1 \bar u_2 u_3,
\]
and estimate the two terms separately.
For the first one we use \eqref{vf-ell-ll2} for $u_1$,
\[
\begin{aligned}
\lambda_4^{s_1}t |a'(\lambda_1)-a'(\lambda_4)|
\| \chi_{\lambda_3}  u_1 \bar u_2 u_3\|_{L^2}
\lesssim & \ \lambda_4^{s_1} t |a'(\lambda_1)-a'(\lambda_4)|
\| \chi_{\lambda_3}  u_1 \|_{L^2}\|u_2\|_{L^\infty} \|u_3\|_{L^\infty}
\\
\lesssim & \ \lambda_4^{s_1} t |a'(\lambda_1)-a'(\lambda_4)|
\frac{1}{\lambda_1^{s_1} t|a'(\lambda_1)-a'(\lambda_3)|}\| u_1 \|_{X}\|u_2\|_{L^\infty} \|u_3\|_{L^\infty}.
\end{aligned}
\]
If $\sigma \geq -1$ then the coefficient equals
\[
\frac{\lambda_4^{s_1} |a'(\lambda_1)-a'(\lambda_4)|}
{\lambda_1^{s_1} \lambda_3^{\sigma+1}} \lesssim 
\frac{\lambda_4^{s_1} (\lambda_1+\lambda_4)^{\sigma+1}}
{\lambda_1^{s_1}  \lambda_3^{\sigma+1}} \leq 1.
\]
Else, $\lambda_4 \lesssim \lambda_1$ therefore the coefficient equals
\[
\frac{\lambda_4^{s_1} \lambda_4^{\sigma+1}}
{\lambda_1^{s_1}  \lambda_1^{\sigma+1}}  \leq 1.
\]
For the second one we use \eqref{vf-ell-ll2} for $u_1$,
\[
\begin{aligned}
\lambda_4^{s_1}t |a'(\lambda_1)-a'(\lambda_4)|
\|(1- \chi_{\lambda_3})  u_1 \bar u_2 u_3\|_{L^2}
\lesssim & \ \lambda_4^{s_1} t |a'(\lambda_1)-a'(\lambda_4)|
\|u_1\|_{L^\infty} \|u_2\|_{L^\infty}\| (1-\chi_{\lambda_3})  u_3 \|_{L^2}
\\
\lesssim & \ \lambda_4^{s_1} t |a'(\lambda_1)-a'(\lambda_4)|
\frac{1}{\lambda_3^{s_1} t \lambda_3 a''(\lambda_3)|}\| u_1 \|_{X}\|u_2\|_{L^\infty} \|u_3\|_{L^\infty},
\end{aligned}
\]
and the coefficient is again easily verified to be $\leq 1$
by considering the same two cases as above.
\medskip

After this reduction, it remains to consider the case
when 
\begin{equation}
 \lambda_1 \ll \lambda_4 \lesssim \lambda_2 \approx \lambda_3, \quad \quad \sigma < 1.    
\end{equation} 
Here we separate the case $\sigma < -2$, where the threshold $\lambda_0$
plays a role. Precisely, if $\lambda_3 > \lambda_0$ then we can use \eqref{LP3} 
where $r$ has size $\lambda_4^{\sigma+1}$
in order to write schematically
\[
LP_{\lambda_4} (u_1 \bar u_2 u_3) = P_{\lambda_4} (u_1 \bar u_2 L u_3) + \lambda_4^{-1} P_{\lambda_4} (u_1 \bar u_2  u_3) + t\lambda_4^{\sigma+1} P_{\lambda_4}(u_1 \bar u_2 u_3).
\]
The first two terms are easy to estimate directly. So it remains to consider the third, where 
we estimate
\[
\begin{aligned}
t\lambda_4^{\sigma+1} \|P_{\lambda_4}(u_1 \bar u_2 u_3)\|_{H^{s_1}}
\lesssim & \ \lambda_4^{s_1+\sigma+1} \|u_1\|_{L^\infty}\|u_2\|_{L^\infty}
\|u_3\|_{L^2} 
\\
\lesssim & \ t \lambda_4^{s_1+\sigma+1} \lambda_3^{-s_0}\|u_1\|_{L^\infty}\|u_2\|_{L^\infty}
 \|u_3\|_{X}  
\\
\lesssim & \ t \lambda_3^{\sigma+2} \|u_1\|_{L^\infty}\|u_2\|_{L^\infty}
 \|u_3\|_{X}, 
\end{aligned}
\]
where $t \lambda_3^{\sigma+2} \leq t \lambda_0^{\sigma+2} = 1$.

From here on, we will assume that $\lambda_3 < \lambda_0$ in the case
$\sigma < -2$. Since $\lambda_1 \ll \lambda_4$, we can harmlessly move the $P_{\lambda_4}$
projection to the product $\bar u_2 u_3$, and work with 
\[
 v := u_1 P_{\lambda_4}(\bar u_2 u_3).
\]

To simplify matters, we note that within the 
$\lambda_4$ frequency region we have
\[
l(x,\xi) = x-x_4 + t O( \lambda_4^{\sigma+1}),
\]
and similarly at the operator level we get
\begin{equation}\label{L-last}
\| L v \|_{H^{s_1}}
\lesssim \lambda_4^{s_1}( \| (x-x_4) u_1 P_{\lambda_4} (\bar u_2 u_3) \|_{L^2}
+ t \lambda_4^{\sigma+1} \| u_1 P_{\lambda_4} (\bar u_2 u_3)\|_{L^2}). 
\end{equation}
We will rely on this bound 
for $\sigma \geq -1$. However, for 
$\sigma < -1$ we can process the first term further. We first move $x-x_4$ inside $P_{\lambda_4}$ at the expense of a mild commutator term, which is schematically 
written as 
\[
(x-x_4) u_1 P_{\lambda_4} (\bar u_2 u_3)
=  u_1 P_{\lambda_4} ((x-x_4)\bar u_2 u_3)
+ \lambda_4^{-1} u_1 P_{\lambda_4} (\bar u_2 u_3).
\]
The contribution of the $L^2$ norm of the commutator term can be harmlessly included into the second RHS term in \eqref{L-last},
using the upper bound $\lambda_4 \lesssim \lambda_0$ if $\sigma < -2$. On the other hand for the main term we can write
\[
(x-x_4) u_3 =  L u_3 + 
t R u_3, \qquad |r| \approx \lambda_4^{\sigma+1}.
\]
Since $\sigma < -1$, the contribution of the error term $R$ can also be included
into the second RHS term in \eqref{L-last}.
Finally, for the $Lu_3$ term we estimate 
directly 
\begin{equation}\label{L-help}
\lambda_4^{s_1} \| u_1 P_{\lambda_4} 
 u_2 L u_3\|_{L^2} \lesssim 
\left( \frac{\lambda_4}{\lambda_3}\right)^{s_1} \|u_1\|_{L^\infty} 
\| u_2\|_{L^\infty} \| u_3\|_{X},
\end{equation}
which is an acceptable contribution. 
We arrive at the following simplification of \eqref{L-last}, 
\begin{equation}\label{L-last+}
\| L v \|_{H^{s_1}}
\lesssim RHS\eqref{L-help} + t \lambda_4^{s_1+\sigma+1} \| u_1 P_{\lambda_4} (\bar u_2 u_3)\|_{L^2}, \qquad \sigma < -1.
\end{equation}
\medskip

At this point we would like to consider angular localizations for the triple 
product $ v = u_1 P_{\lambda_4} (\bar u_2 u_3)$, centered on the angle associated to $\lambda_3$. This angular region has size 
$t \lambda_3 a''(\lambda_3) = t \lambda_3^{\sigma+1}$, whereas $v$
has frequency $\lambda_4$. So, by the uncertainty principle, this localization 
is meaningful only if 
\begin{equation}\label{t-uncert}
  t \lambda_3^{\sigma+1} \lambda_4 \gtrsim  1.    
\end{equation}
This constraint is nontrivial only if 
$\sigma < -1$. We dispense with the 
complementary range by estimating directly the second RHS term in \eqref{L-last+}
as follows:
\begin{equation}\label{direct}
\begin{aligned}
t \lambda_4^{s_1+\sigma+1} \| u_1 P_{\lambda_4} (\bar u_2 u_3)\|_{L^2}
&\lesssim t \lambda_4^{s_1+\sigma+1} \lambda_3^{-s_0} \| u_1 \|_{L^\infty} \| u_2 \|_{X} \|u_3\|_{L^\infty} \\
&= (t \lambda_3^{\sigma+1} \lambda_4)
\lambda_4^{s_1+\sigma} \lambda_3^{-s_0-\sigma -1}  \| u_1 \|_{L^\infty} \| u_2 \|_{X} \|u_3\|_{L^\infty}  ,
\end{aligned}
\end{equation}
where all the factors on the right are 
$\lesssim 1$ given our choice of $s_0$ and $s_1$. We assume \eqref{t-uncert} from here 
on.
\medskip

We are now ready to localize $v$ using the angular cutoff $\chi_{\lambda_3}$ associated to frequency $\lambda_3$ waves.
It is easier to first consider the contribution of $(1-\chi_{\lambda_3})v$. 
One difficulty we encounter is that we need to commute this localization 
with $P_{\lambda_4}$,
\[
(1-\chi_{\lambda_3})P_{\lambda_4} = (1-\chi_{\lambda_3}) P_{\lambda_4}(1-\tchi_{\lambda_3})+ R,
\]
where the error $R$ has size 
\[
\| R\|_{L^2 \to L^2} \lesssim \left(\frac{1}{t \lambda_4 \lambda_3^{\sigma+1}}\right)^{N}.
\]
Here, if $\sigma \geq -1$ then we get $t^{-N}$ and the $R$ bound becomes straightforward. Otherwise \eqref{t-uncert}
holds so we can simply add the $R$ 
bound to the computation in \eqref{direct}.

Hence we are left with the bound for the contribution of the expression 
\[
u_1 P_{\lambda_4}(\bar u_2 (1-\chi_{\lambda_3})u_3).
\]
to either \eqref{L-last} (for $\sigma \geq -1$) or \eqref{L-last+} (for $\sigma < -1$). This is 
\[
I_1 = \lambda_4^{s_1}( \| (x-x_4) u_1 P_{\lambda_4} (\bar u_2 (1-\chi_{\lambda_3}) u_3) \|_{L^2}
+ t \lambda_4^{\sigma+1} \| u_1 P_{\lambda_4} (\bar u_2 (1-\chi_{\lambda_3}) u_3)\|_{L^2}).
\]
Here we harmlessly commute $x-x_4$ inside $P_{\lambda_4}$, modulo a mild error term which is controlled by the second term on the right. Then we use Proposition~\ref{p:vf-ell}
to estimate
\[
\| (1-\chi_{\lambda_3})(x-x_3) u_3 \|_{L^2} \lesssim
\lambda_3^{-s_1} \|u_3\|_{X}.
\]
 Bounding the other two factors 
in $L^\infty$, this yields
\[
I_1
\lesssim \lambda_4^{s_1} \lambda_3^{-s_1} \sup_{x\not \in A_3} \frac{|x-x_4| + t \lambda_4^{\sigma+1}}{|x-x_3|}\|u_3\|_{X} \|u_1\|_{L^\infty} \|u_2\|_{L^\infty}.
\]
The supremum is attained when $x$ is closest to $x_3$, i.e. when 
$|x-x_3| \approx t \lambda_3 a''(\lambda_3)$, in which case we get 
the coefficient
\[
\lambda_4^{s_1} \lambda_3^{-s_1} \frac{|a'(\lambda_4) - a'(\lambda_3)|}{\lambda_3 a''(\lambda_3)}. 
\]
If $\sigma \geq -1$ this gives 
\[
\lambda_4^{s_1} \lambda_3^{-s_1} \leq 1.
\]
If $\sigma < -1$ we get instead
\[
\lambda_4^{s_1+\sigma+1} \lambda_3^{-s_1-\sigma-1} \leq 1,
\]
both of which suffice.

\bigskip

Finally, we consider the most difficult case, where we 
estimate the contribution of  $\chi_{\lambda_3} v = \chi_{\lambda_3} u_1 P_{\lambda_4}(\bar u_2 u_3)$,
namely 
\[
\begin{aligned}
I_2 = & \  \lambda_4^{s_1}( \| (x-x_4) \chi_{\lambda_3} u_1 P_{\lambda_4} (\bar u_2 u_3) \|_{L^2}
+ t \lambda_4^{\sigma+1} \| \chi_{\lambda_3} u_1 P_{\lambda_4} (\bar u_2 u_3)v\|_{L^2}) \\ \approx & \  
t\lambda_4^{s_1} |a'(\lambda_4) - a'(\lambda_3)| \| \chi_{\lambda_3} u_1 P_{\lambda_4} (\bar u_2 u_3)\|_{L^2}.
\end{aligned}
\]
Here we can apply the bound \eqref{vf-ell-llinf} for 
$u_1$ to get
\begin{equation}\label{u1-low}
\| \chi_{\lambda_3} u_1\|_{L^\infty} \lesssim \frac{\lambda_1^{-s_1+\frac12}}{t |a'(\lambda_1) - a'(\lambda_3)|} \|u_1\|_{X}.
\end{equation}
On the other hand, for $\bar u_2 u_3$ we compute
\[
 t\partial_x (\bar u_2 u_3) = \overline{\tL u_2} u_3 + \bar u_2 \tL u_3,
\]
which allows us to estimate
\[
\| P_{\lambda_4} (\bar u_2 u_3) \|_{L^2} \lesssim \frac{\lambda_3^{-s_1}}{t\lambda_4 a''(\lambda_3)}(\| u_2\|_{X} \|u_3\|_{L^\infty}
+ \|u_2\|_{L^\infty} \|u_3\|_{X}).
\]
Combining the last two bounds, we arrive at
\[
I_2
\lesssim \lambda_4^{s_1-1}{|a'(\lambda_4)-a'(\lambda_3)|} \frac{\lambda_1^{-s_1+\frac12}}{|a'(\lambda_1) - a'(\lambda_3)|}\frac{\lambda_3^{-s_1}}{t a''(\lambda_3)}\|u_1\|_{X}
(\| u_2\|_{X} \|u_3\|_{L^\infty}
+ \|u_2\|_{L^\infty} \|u_3\|_{X}).
\]
Now we examine the coefficient in front. If $ \sigma \geq -1$ then we obtain
\[
\lambda_4^{s_1-1} \lambda_1^{-s_1+\frac12} t^{-1} \lambda_3^{-s_1-\sigma} \leq \lambda_4^{-\frac12} t^{-1},
\]
which is more than sufficient. 

However, if $\sigma < -1$ then we get instead
\[
I_2 \lesssim \lambda_4^{s_1+\sigma} \lambda_1^{-s_1-\sigma-\frac12} t^{-1} \lambda_3^{-s_1-\sigma}\|u_1\|_{X}
(\| u_2\|_{X} \|u_3\|_{L^\infty}
+ \|u_2\|_{L^\infty} \|u_3\|_{X}),
\]
which is unsatisfactory since the power of the high frequency $\lambda_3$ is positive. To rectify this, we use again \eqref{u1-low}
but estimate $u_3$ directly in $L^2$ to get
\[
\begin{aligned}
I_2 \lesssim & \ \lambda_4^{s_1}{|a'(\lambda_4)-a'(\lambda_3)|} \frac{\lambda_1^{-s_1+\frac12}}{|a'(\lambda_1) - a'(\lambda_3)|} \lambda_3^{-s_0} \| u_1\|_{X} \|u_2\|_{L^\infty} \|u_3 \|_{H^{s_0}}
\\
\lesssim & \ \lambda_4^{s_1+\sigma+1} \lambda_1^{-s_1-\sigma -\frac12}\lambda_3^{-s_0} \| u_1\|_{X} \|u_2\|_{L^\infty} \|u_3 \|_{H^{s_0}}.
\end{aligned}
\]
This has a negative power of $\lambda_3$
but insufficient time decay. Combining the two bounds 
we arrive at 
\[
I_2 \lesssim \left(\frac{\lambda_4}{\lambda_1}\right)^{s_1+\sigma+\frac12}
\min\left\{ t^{-1} \lambda_4^{\frac12} \lambda_3^{-s_0},
\lambda_4^{-\frac12} \lambda_3^{-s_1-\sigma}\right\} 
\|u_1\|_{X}
(\| u_2\|_{X} \|u_3\|_{L^\infty}
+ \|u_2\|_{L^\infty} \|u_3\|_{X}).
\]
Here the first exponent is negative $s_1+\sigma+\frac12 < 0$, 
and thus favourable. In the second factor, balancing exactly at the middle would yield the factor
\[
t^{-\frac12} \lambda_3^{\frac{-s_0-s_1-\sigma}2},
\]
with a favourable  negative power of $\lambda_3$ but a marginally 
insufficient power of $t$. But unbalancing this slightly suffices 
in order to improve the power of $t$ while maintaining a negative power for $\lambda_3$. 

This concludes the proof of Lemma~\ref{l:LQunbal}.

\end{proof}

The proof of Proposition~\ref{p:ee-Lu} is now also concluded.
\end{proof}

\subsection{Wave packets and the asymptotic profile}

For each admissible velocity $v \in J = a'(\R)$
we define the associated wave packet $\uu_v$
using the same formula \eqref{wp-def} as in the model case. Then the associated 
asymptotic profile $\gamma(t,v)$ can be defined exactly as before, following \eqref{def-gamma} but as a function 
\begin{equation}\label{domain-gamma}
\gamma: D = J \times \R^+ \to \mathbb C.
\end{equation}

If we consider velocities in the dyadic range $v \in J_\lambda$ then the spatial localization 
scale for the associated wave packet $\uu_v$ is
\[
\delta x \approx t^\frac12 (a''(\lambda))^\frac12.
\]
It is instructive to compare this scale with the size of the spatial region $\tilde J_\lambda$ associated to frequency $\lambda$, which is
\[
|\tilde J_\lambda| = t \lambda a''(\lambda).
\]
It is meaningful to define our asymptotic profile 
only if this dominates the wave packet scale,
\[
\delta x \lesssim   |\tilde J_\lambda|.
\]
This is equivalent to 
\[
t^\frac12 (a'')^\frac12 \lesssim t
\lambda a'' \quad  \Longleftrightarrow \quad t \gtrsim 
(\lambda^2 a''(\lambda))^{-1}.
\]
This is nontrivial only in the Klein-Gordon case $\sigma < -2$, where it can be rewritten in the 
form $\lambda \lesssim \lambda_0$,
which is the same threshold we have encountered before. Hence, from here on, in the case 
$\sigma < -2$ we will restrict $\gamma$ to a smaller set. Precisely, in this case we will redefine $D$
as
\begin{equation}\label{domain-gamma-low}
  D = \bigcup_\lambda \{ (v,t) \subset J \times \R^+; 
v \in J_\lambda, \ \  t   \gtrsim 
(\lambda^2 a''(\lambda))^{-1}\}.
\end{equation}

\bigskip

The first step in our study of the asymptotic 
profile $\gamma$ is to obtain bounds for it in terms of the $X$ norm of $u$.

\begin{lemma}\label{l:gamma}
Let $t \geq 1$, and $u \in X$ be a function at time 
$t$. Then within $D$ we have the bounds
\begin{equation}\label{gamma-L2}
\| \gamma \|_{L^2_v(J_\lambda)} \lesssim
(\lambda^{-s_0}+(t a''(\lambda) \lambda^2)^{-N})  \|u\|_{X},
\end{equation}
\begin{equation}\label{gamma-inf}
\| \gamma \|_{L^\infty(J_\lambda)}  \lesssim
(\lambda^{-\frac{\delta}{2}} +(t a''(\lambda) \lambda^2)^{-N})\|u\|_{X},
\end{equation}
\begin{equation}\label{gamma-dv}
\|\partial_v \gamma \|_{L^2_v(J_\lambda)} \lesssim
(\lambda^{-s_1-\sigma}+(t a''(\lambda) \lambda^2)^{-N}) \|u\|_{X}.
\end{equation}
\end{lemma}
We remark that the term $(t a''(\lambda) \lambda^2)^{-N})$ is only relevant in the case
$\sigma \leq -2$. Precisely, if $\sigma = -2$ then it gives
$t^{-N}$, and if $\sigma < -2$ then it gives
$(\lambda/\lambda_0)^N$.

\begin{proof}
Here we use the fact that, for $v \in J_\lambda$, our wave packet $\uu_v$ is essentially localized at frequency $\lambda$. Precisely, we can represent it 
as 
\[
\uu_v = (a''(\lambda))^{-\frac12} \chi(v,y)   e^{i \xi_v x}, \qquad y = \frac{x-vt}{\sqrt{t a''(\lambda)}},
\]
with $\chi$  Schwartz in $y$, uniformly in $v$.
This allows us to obtain favourable bounds for the portion of $\uu_v$ away from frequency $\lambda$,
\[
| \partial_x^{k} P_{\neq \lambda} \uu_v |
\lesssim_{k,N} (a''(\lambda))^{-\frac12} (1+|y|)^{-N} \lambda^k (t a''(\lambda) \lambda^2)^{-N},
\]
where one can distinguish three separate cases:

a) $\sigma > -2$, where we get an arbitrarily large gain,
\[
| \partial_x^{k} P_{\neq \lambda} \uu_v |
\lesssim_{k,N} \lambda^{-N} (1+|y|)^{-N} t^{-N}.
\]

b) $\sigma = -2$, where we only have the gain in time,
\[
| \partial_x^{k} P_{\neq \lambda} \uu_v |
\lesssim_{k,N} (a''(\lambda))^{-\frac12} (1+|y|)^{-N} t^{-N}.
\]

c) $\sigma < -2$, where the gain depends on the distance to $\lambda_0$,
\[
| \partial_x^{k} P_{\neq \lambda} \uu_v |
\lesssim_{k,N} (a''(\lambda))^{-\frac12} (1+|y|)^{-N} \lambda^k (\lambda_0/\lambda)^{-N}.
\]
We now use this in order to prove the three bounds in the Lemma.
\medskip

\emph{Proof of  \eqref{gamma-L2}:}
We separate frequencies $\lambda$ and frequencies 
away from $\lambda$, 
\[
\gamma(t,v) = \la u_\lambda, \uu_v \ra 
+ \la u, P_{\neq \lambda} \uu_v \ra.
\]
For the first inner product we use 
Young's inequality to get
\[
\| \la u_\lambda, \uu_v \ra \|_{L^2_v} \lesssim 
\|u_\lambda\|_{L^2_x},
\]
where we lose a $t^{\frac12}$ factor from the 
$L^1_x$ norm of $\uu_v$ but we regain it from the change of coordinates from $x$ to $x/t$.
For the second product, on the other hand, we take advantage of the rapid decay in the above bounds
for $P_{\neq \lambda} \uu_v$. In the nontrivial range 
$\sigma \leq -2$ we have $s_0 > 0$, so the worst contribution 
comes from frequencies $\lesssim 1$ in $u$.

\bigskip
\emph{Proof of  \eqref{gamma-inf}:}
Here we use instead the pointwise bound 
\eqref{vf-point-lambda} for $u$. The main contribution 
comes from $u_\lambda$ via Young's inequality,
while the other frequencies only contribute
a rapidly decaying tail, as above.

\bigskip
\emph{Proof of  \eqref{gamma-dv}:}
We have 
\[
\gamma_v(t,v) = \la u, \partial_v \uu_v\ra.
\]
For $\partial_v \uu_v$ we use the representation 
in Lemma~\ref{l:dv-uv}, to write the above expression as 
\[
\gamma_v(t,v) = \la Lu, \uu_v^2\ra +
\la u, \rr_v\ra .
\]
As above, the leading contribution comes from $u_\lambda$ where we can use directly Young's inequality. 
\end{proof}

Next we compare the asymptotic profile with the exact solution, working in the same region.

\begin{lemma}\label{l:u-gamma}
Suppose  $v \in J_\lambda$, with 
the additional restriction $\lambda < \lambda_0$
in the case when $\sigma < -2$. Then we have 
\begin{equation}
|\gamma(t,v) - t^{\frac12} u_\lambda(t,vt) e^{-it\phi(v)}| \lesssim
(t \lambda^2 a''(\lambda))^{-\frac14} 
\|u\|_{X}.
\end{equation}
\end{lemma}

\begin{proof}
Since $\lambda < \lambda_0$, the contribution 
of $u_{\neq \lambda}$ to $\gamma$ has size 
$(t \lambda^2 a''(\lambda))^{-N}$ and may be neglected.

Next we consider the contribution of $u_\lambda$ to $\gamma$, which generates the error
\[
r(t,v) = \la \uu_v, u_\lambda \ra - t^{\frac12} u_\lambda(t,vt) e^{-it\phi(v)}.
\]
Here the scales are fixed, 
so we can directly apply the argument in Section~\ref{s:bd-gamma}, Proposition~\ref{p:u-gamma} to get the error bound
\[
| r(t,v)| \lesssim  \lambda^{-s_1} t^{-\frac14} a''(\lambda)^{-\frac34} \|u\|_{X}
\lesssim  t^{-\frac14} \lambda^{-\frac12} (a'')^{-\frac14} \|u\|_{X},
\]
which is exactly as needed.
\end{proof}

Finally, we show that  $\gamma$ is a good approximate solution for the asymptotic equation,
\begin{equation}\label{gamma-asympt-re}
    \dot \gamma(t,v) = i q(\xi_v,\xi_v,\xi_v) t^{-1} \gamma(t,v) |\gamma(t,v)|^2 
+ f(t,v),
\end{equation}
where $f$ satisfies favourable bounds:

\begin{lemma}\label{l:asympt-err}
Suppose  $v \in J_\lambda$ and that, in addition, 
$\lambda < \lambda_0$ if $\sigma < -2$.
Then the error $f$ satisfies the 
uniform bound
\begin{equation}
|f(t,v)| \lesssim
(t^{-1} (t \lambda^2 a'')^{-\frac14} + 
t^{-1-\frac{\delta}4}\lambda^{-\frac{\delta}{4}} )
t^{C^2 \epsilon^2}, \qquad \sigma \leq -2, 
\quad t \lambda^2 a'' \geq 1,
\end{equation}
respectively 
\begin{equation}
|f(t,v)| \lesssim
t^{-1-\frac{\delta}4}\lambda^{-\frac{\delta}{4}} 
t^{C^2 \epsilon^2}, \qquad \sigma >  -2.
\end{equation}

\end{lemma}

\begin{proof}
We can write
\[
f(t,v) =  \langle u, (i \partial_t - A(D)) \uu_v \rangle + 
\langle  Q(u,\bar u,u), \uu_v \rangle : = f_1+f_2,
\]
and estimate each term separately.

For $f_1$ we use Lemma~\ref{l:Puu} to write
\[
(i \partial_t - A(D)) \uu_v = t^{-\frac32}( L \uu^1_v + \rr^1_v ),
\]
where
\[
\uu^1_v \approx (a'')^{-\frac12} \uu_v, \qquad \rr_v \approx \lambda^{-1} (a'')^{-\frac12} \uu_v.
\]
Hence we can use Holder's inequality to bound
\[
|f_1(t,v)| \lesssim t^{-\frac54} (a'')^{-\frac34}  \lambda^{-s_1} \| u\|_{X}
\lesssim t^{-1} (t a'' \lambda^2)^{-\frac14}\| u\|_{X}.
\]
%The $L^2$ bound for $f_1$ follows in a similar fashion from Young's inequality.

Next we consider $f_2$, where we use the balanced/unbalanced decomposition of $Q$.
The contribution of the unbalanced part $Q^{unbal}$ is placed
in $f$ using the bound and \eqref{Q-L2-from-Lh}. 

It remains to consider the balanced component of $Q$. Furthermore, in view of the frequency localization of $\uu_v$ at frequency $\lambda$,  it suffices to consider 
the balanced component of $Q$ localized to frequency $\lambda$.

 Here we go through two stages, exactly as in the similar argument in the model case:

a) Replace $u_\lambda$ by $t^{-\frac12} \gamma(t,v) \chi_\lambda e^{it\phi} $, with errors controlled by Lemmas~\ref{l:u-gamma} and \ref{l:gamma}.

b) Replace $Q_\lambda(\chi_\lambda e^{it\phi},\chi_\lambda e^{-it\phi},\chi_\lambda e^{it\phi})$ by $t^{-\frac32} \chi_\lambda^3 e^{it\phi} q(\xi_v,\xi_v,\xi_v)$.

\end{proof}

\subsection{ Conclusion}
Our remaining objective is to recover our bootstrap assumption, and show that
we have the better bound
\begin{equation}
\| \la D\ra^{\frac{\delta}4} u \|_{L^\infty} \lesssim \epsilon    .
\end{equation}
We consider separately each dyadic component $u_\lambda$, for which we seek to show that 
\begin{equation}\label{want}
\| u_\lambda \|_{L^\infty}   \lesssim \epsilon t^{-\frac12} \lambda^{-\frac{\delta}3}.
\end{equation}
On the other hand, from the vector field bound \eqref{vf-point-lambda}
and the energy estimates \eqref{energy+}
we have
\begin{equation}\label{have}
\| u_\lambda \|_{L^\infty}   \lesssim \epsilon t^{-\frac12+C^2 \epsilon^2} \lambda^{-\frac{\delta}2}.
\end{equation}
Here $\epsilon$ is sufficiently small, so in particular we can assume
that $\epsilon \ll \delta$. Hence, the desired conclusion \eqref{have}
follows provided that 
$t \lesssim \lambda^{N}$,
where the large constant $N$ can be chosen arbitrarily.
It remains to consider the complementary region $t \gtrsim \lambda^N$.
We remark that in the case when $\sigma < -2$, this region lies entirely within $D$, so in particular it ensures that $\lambda < \lambda_0$.

We now divide and conquer depending on the spatial location:

a) Outside of the region $\tilde J_\lambda$, we use the elliptic bound \eqref{vf-ell-llinf}.
This yields 
\[
\|u_\lambda\|_{L^\infty(\tilde J_\lambda^C)} \lesssim 
\frac{\lambda^{-s_1+\frac12}}{t \lambda a''(\lambda)} t^{C^2 \epsilon^2}
\lesssim \frac{1}{(t \lambda^2 a''(\lambda))^\frac12} t^{C^2 \epsilon^2},
\]
which suffices if $t \gtrsim \lambda^N$.

b) It remains to bound $\chi_\lambda u_\lambda$. By Lemma~\ref{l:u-gamma}, this is equivalent to showing that our asymptotic function $\gamma$ satisfies a similar bound,
namely 
\begin{equation}
|\gamma(t,v)| \lesssim \epsilon \lambda^{-\frac{\delta}{2}}, \qquad v \in J_\lambda, \qquad  t \gtrsim \lambda^N  . 
\end{equation}
At this point it is natural to split into two cases:

A) $\sigma > -2$. Here we initialize $\gamma$ at $t = 1$, and use the asymptotic
equation \eqref{gamma-asympt-re} to bound $\gamma$ at later times.

B) $\sigma \leq - 2$. Here $\gamma$ is restricted to the set $D$, so for each velocity $v \in J_\lambda$ we initialize at times where $t \approx \lambda^N$,
using \eqref{have}, and propagate the bound using the asymptotic equation\eqref{gamma-asympt-re}.

\end{proof}

\bibliographystyle{plain}

%\bibliography{vf}

\begin{thebibliography}{10}

\bibitem{AIT-global}
Albert {Ai}, Mihaela {Ifrim}, and Daniel {Tataru}.
\newblock {Two dimensional gravity waves at low regularity II: Global
  solutions}.
\newblock {\em arXiv e-prints}, page arXiv:2009.11513, September 2020.

\bibitem{MR1207209}
P.~Deift and X.~Zhou.
\newblock A steepest descent method for oscillatory {R}iemann-{H}ilbert
  problems. {A}symptotics for the {MK}d{V} equation.
\newblock {\em Ann. of Math. (2)}, 137(2):295--368, 1993.

\bibitem{Delort-KG}
Jean-Marc Delort.
\newblock Semiclassical microlocal normal forms and global solutions of
  modified one-dimensional {KG} equations.
\newblock {\em Ann. Inst. Fourier (Grenoble)}, 66(4):1451--1528, 2016.

\bibitem{GV}
J.~Ginibre and G.~Velo.
\newblock Smoothing properties and retarded estimates for some dispersive
  evolution equations.
\newblock {\em Comm. Math. Phys.}, 144(1):163--188, 1992.

\bibitem{BHG}
Benjamin Harrop-Griffiths.
\newblock Long time behavior of solutions to the m{K}d{V}.
\newblock {\em Comm. Partial Differential Equations}, 41(2):282--317, 2016.

\bibitem{r2}
Benjamin Harrop-Griffiths and Jeremy~L. Marzuola.
\newblock Small data global solutions for the {C}amassa-{C}hoi equations.
\newblock {\em Nonlinearity}, 31(5):1868--1904, 2018.

\bibitem{HN}
Nakao Hayashi and Pavel~I. Naumkin.
\newblock Asymptotics for large time of solutions to the nonlinear
  {S}chr\"{o}dinger and {H}artree equations.
\newblock {\em Amer. J. Math.}, 120(2):369--389, 1998.

\bibitem{NLS}
Mihaela Ifrim and Daniel Tataru.
\newblock Global bounds for the cubic nonlinear {S}chr\"{o}dinger equation
  ({NLS}) in one space dimension.
\newblock {\em Nonlinearity}, 28(8):2661--2675, 2015.

\bibitem{IT-global}
Mihaela Ifrim and Daniel Tataru.
\newblock Two dimensional water waves in holomorphic coordinates {II}: {G}lobal
  solutions.
\newblock {\em Bull. Soc. Math. France}, 144(2):369--394, 2016.

\bibitem{IT-t}
Mihaela Ifrim and Daniel Tataru.
\newblock The lifespan of small data solutions in two dimensional capillary
  water waves.
\newblock {\em Arch. Ration. Mech. Anal.}, 225(3):1279--1346, 2017.

\bibitem{NLS-approx}
Mihaela Ifrim and Daniel Tataru.
\newblock The {NLS} approximation for two dimensional deep gravity waves.
\newblock {\em Sci. China Math.}, 62(6):1101--1120, 2019.

\bibitem{msri-summer}
Mihaela Ifrim and Daniel Tataru.
\newblock Introduction to water waves [virtual summer graduate school].
\newblock \url{https://www.msri.org/summer_schools/910}, August 2020.

\bibitem{KP}
Jun Kato and Fabio Pusateri.
\newblock A new proof of long-range scattering for critical nonlinear
  {S}chr\"{o}dinger equations.
\newblock {\em Differential Integral Equations}, 24(9-10):923--940, 2011.

\bibitem{KeTa}
Markus Keel and Terence Tao.
\newblock Endpoint {S}trichartz estimates.
\newblock {\em Amer. J. Math.}, 120(5):955--980, 1998.

\bibitem{KT}
Herbert Koch and Daniel Tataru.
\newblock Dispersive estimates for principally normal pseudodifferential
  operators.
\newblock {\em Comm. Pure Appl. Math.}, 58(2):217--284, 2005.

\bibitem{LS}
Hans Lindblad and Avy Soffer.
\newblock Scattering and small data completeness for the critical nonlinear
  {S}chr\"{o}dinger equation.
\newblock {\em Nonlinearity}, 19(2):345--353, 2006.

\bibitem{r1}
Grace Liu.
\newblock Modified scattering for the cubic {S}chr\"{o}dinger equation small
  data solution on product space.
\newblock {\em SIAM J. Math. Anal.}, 51(5):4023--4073, 2019.

\bibitem{r3}
Mamoru Okamoto.
\newblock Long-time behavior of solutions to the fifth-order modified
  {K}d{V}-type equation.
\newblock {\em Adv. Differential Equations}, 23(9-10):751--792, 2018.

\bibitem{MR3409892}
Guido Schneider.
\newblock Validity and non-validity of the nonlinear {S}chr\"{o}dinger equation
  as a model for water waves.
\newblock In {\em Lectures on the theory of water waves}, volume 426 of {\em
  London Math. Soc. Lecture Note Ser.}, pages 121--139. Cambridge Univ. Press,
  Cambridge, 2016.

\bibitem{Stein}
Elias~M. Stein.
\newblock {\em Harmonic analysis: real-variable methods, orthogonality, and
  oscillatory integrals}, volume~43 of {\em Princeton Mathematical Series}.
\newblock Princeton University Press, Princeton, NJ, 1993.
\newblock With the assistance of Timothy S. Murphy, Monographs in Harmonic
  Analysis, III.

\bibitem{T-FF}
Daniel Tataru.
\newblock On the {F}efferman-{P}hong inequality and related problems.
\newblock {\em Comm. Partial Differential Equations}, 27(11-12):2101--2138,
  2002.

\bibitem{Pisa}
Daniel Tataru.
\newblock Phase space transforms and microlocal analysis.
\newblock In {\em Phase space analysis of partial differential equations.
  {V}ol. {II}}, Pubbl. Cent. Ric. Mat. Ennio Giorgi, pages 505--524. Scuola
  Norm. Sup., Pisa, 2004.

\end{thebibliography}

\end{document}